\documentclass[11pt,twoside,reqno,centertags,draft]{amsart}
\usepackage{amsfonts}
\usepackage{ulem}
\usepackage{color,enumitem,graphicx}
\usepackage[colorlinks=true,urlcolor=blue,
citecolor=red,linkcolor=blue,linktocpage,pdfpagelabels,
bookmarksnumbered,bookmarksopen]{hyperref}

\setcounter{page}{1}
  \usepackage{amsmath,amsthm,amsfonts,amssymb}
  \pagestyle{myheadings}
\thispagestyle{empty}
    \textwidth = 6 true in
    \textheight =8.    true in

  \voffset= -20 true pt
  \oddsidemargin=0true in
  \evensidemargin=0true in

\begin{document}

\title{Normalized solutions and mass concentration for supercritical nonlinear Schr\"{o}dinger equations}
\date{}
\maketitle

\vspace{ -1\baselineskip}

{\small
\begin{center}
{\sc  Jianfu Yang} \\
Department of Mathematics,
Jiangxi Normal University\\
Nanchang, Jiangxi 330022,
P.~R.~China\\
email: Jianfu Yang: jfyang\_2000@yahoo.com\\[10pt]
{\sc  Jinge Yang*} \\
School of Sciences,
Nanchang Institute of Technology\\
Nanchang 330099,
P.~R.~China\\
email: Jinge Yang: jgyang2007@yeah.net\\[10pt]

\end{center}
}

\renewcommand{\thefootnote}{}
\footnote{Key words: $L^2$ supercritical, constrained problems, existence, asymptotic behavior.}

\begin{quote}
{\bf Abstract.} In this paper, we deal with the existence and concentration of normalized solutions to the supercritical nonlinear Schr\"{o}dinger equation
\begin{equation*}
\left\{
\begin{array}{l}
-\Delta u + V(x) u = \mu_q u + a|u|^q u \quad {\rm in}\quad \mathbb{R}^2,\\
\int_{\mathbb{R}^2}|u|^2\,dx =1,\\
\end{array}
\right.
\end{equation*}
where $\mu_q$ is the Lagrange multiplier. We show that for $q>2$ close to $2$,  the equation admits two solutions: one is the local minimal solution $u_q$ and
another one is the mountain pass solution $v_q$. Furthermore, we study the limiting behavior of $u_q$ and $v_q$ when $q\to 2_+$. Particularly, we describe precisely the blow-up formation of the excited state $v_q$.
\end{quote}

\newcommand{\N}{\mathbb{N}}
\newcommand{\R}{\mathbb{R}}
\newcommand{\Z}{\mathbb{Z}}

\newcommand{\cA}{{\mathcal A}}
\newcommand{\cB}{{\mathcal B}}
\newcommand{\cC}{{\mathcal C}}
\newcommand{\cD}{{\mathcal D}}
\newcommand{\cE}{{\mathcal E}}
\newcommand{\cF}{{\mathcal F}}
\newcommand{\cG}{{\mathcal G}}
\newcommand{\cH}{{\mathcal H}}
\newcommand{\cI}{{\mathcal I}}
\newcommand{\cJ}{{\mathcal J}}
\newcommand{\cK}{{\mathcal K}}
\newcommand{\cL}{{\mathcal L}}
\newcommand{\cM}{{\mathcal M}}
\newcommand{\cN}{{\mathcal N}}
\newcommand{\cO}{{\mathcal O}}
\newcommand{\cP}{{\mathcal P}}
\newcommand{\cQ}{{\mathcal Q}}
\newcommand{\cR}{{\mathcal R}}
\newcommand{\cS}{{\mathcal S}}
\newcommand{\cT}{{\mathcal T}}
\newcommand{\cU}{{\mathcal U}}
\newcommand{\cV}{{\mathcal V}}
\newcommand{\cW}{{\mathcal W}}
\newcommand{\cX}{{\mathcal X}}
\newcommand{\cY}{{\mathcal Y}}
\newcommand{\cZ}{{\mathcal Z}}

\newcommand{\abs}[1]{\lvert#1\rvert}
\newcommand{\xabs}[1]{\left\lvert#1\right\rvert}
\newcommand{\norm}[1]{\lVert#1\rVert}

\newcommand{\loc}{\mathrm{loc}}
\newcommand{\p}{\partial}
\newcommand{\h}{\hskip 5mm}
\newcommand{\ti}{\widetilde}
\newcommand{\D}{\Delta}
\newcommand{\e}{\epsilon}
\newcommand{\bs}{\backslash}
\newcommand{\ep}{\emptyset}
\newcommand{\su}{\subset}
\newcommand{\ds}{\displaystyle}
\newcommand{\ld}{\lambda}
\newcommand{\vp}{\varphi}
\newcommand{\wpp}{W_0^{1,\ p}(\Omega)}
\newcommand{\ino}{\int_\Omega}
\newcommand{\bo}{\overline{\Omega}}
\newcommand{\ccc}{\cC_0^1(\bo)}
\newcommand{\iii}{\opint_{D_1}D_i}

\theoremstyle{plain}
\newtheorem{Thm}{Theorem}[section]
\newtheorem{Lem}[Thm]{Lemma}
\newtheorem{Def}[Thm]{Definition}
\newtheorem{Cor}[Thm]{Corollary}
\newtheorem{Prop}[Thm]{Proposition}
\newtheorem{Rem}[Thm]{Remark}
\newtheorem{Ex}[Thm]{Example}

\numberwithin{equation}{section}
\newcommand{\meas}{\rm meas}
\newcommand{\ess}{\rm ess} \newcommand{\esssup}{\rm ess\,sup}
\newcommand{\essinf}{\rm ess\,inf} \newcommand{\spann}{\rm span}
\newcommand{\clos}{\rm clos} \newcommand{\opint}{\rm int}
\newcommand{\conv}{\rm conv} \newcommand{\dist}{\rm dist}
\newcommand{\id}{\rm id} \newcommand{\gen}{\rm gen}
\newcommand{\opdiv}{\rm div}

\vskip 0.2cm \arraycolsep1.5pt
\newtheorem{Lemma}{Lemma}[section]
\newtheorem{Theorem}{Theorem}[section]
\newtheorem{Definition}{Definition}[section]
\newtheorem{Proposition}{Proposition}[section]
\newtheorem{Remark}{Remark}[section]
\newtheorem{Corollary}{Corollary}[section]

\section{Introduction}

\bigskip

In this paper, we study the existence and asymptotic behavior of standing waves for the following nonlinear Schr\"{o}dinger equation
\begin{equation}\label{eq:1.1}
i\psi_t(x,t)=-\Delta \psi(x,t)+V(x)\psi(x,t)-a|\psi(x,t)|^q\psi(x,t)\ \  (x,t)\in \mathbb{R}^2\times\mathbb{R}^1,
\end{equation}
where $a>0$, $q>2$, and $V$ is an external potential. The wave function $\psi$ is confined to the mass constraint $\int_{\mathbb{R}^2}|\psi|^2\,dx = 1$.

By a standing wave of \eqref{eq:1.1} we mean a solution of equation \eqref{eq:1.1} with the form $\psi(t,x)=e^{i\omega t}u(x)$.
In particular, the function $u$ satisfies
\begin{equation}\label{eq:1.2}
-\Delta u+(V(x)+\omega)u=a|u|^qu\ \ {\rm in} \ \ \mathbb{R}^2
\end{equation}
and
\begin{equation}\label{eq:1.3a}
\int_{\mathbb{R}^2}|u(x)|^2\,dx=1.
\end{equation}

In the case $q=2$, equation \eqref{eq:1.1} stems from the study of Bose-Einstein condensation. It was derived independently by Gross and Pitaevskii, and it  is the main theoretical tool for investigating nonuniform dilute Bose gases
at low temperatures. Especially, equation \eqref{eq:1.2} is called the Gross-Pitaevskii equation. The constant $a$ is the interaction coupling constant fixed by the $s$-wave
scattering length. The case $a>0$ represents that the force between the atoms in the condensates is attractive, and if $a<0$, the force is repulsive. Bose-Einstein condensates with attractive interactions in two dimensions, are described by the Gross-Pitaevskii (GP) energy functional
\[
E_{a}(u)=\frac{1}{2}\int_{\mathbb{R}^2}\big(|\nabla u(x)|^2+V(x)|u(x)|^2\big)\,dx-\frac{a}{q+2}\int_{\mathbb{R}^2}|u(x)|^{4}\,dx.
\]
The exponent $4$ is critical for the functional $E_{a}(u)$ under the unit mass constraint \eqref{eq:1.3a} in the sense that if we make a transformation $u_\lambda(x) = \lambda \varphi(\lambda x),\,\lambda>0$ for any fixed $\varphi \in H^1(\mathbb{R}^2)$ with $\|\varphi\|_{L^2(\mathbb{R}^2)} =1$ in the energy functional
\begin{equation}\label{eq:1.3}
E_{a,q}(u)=\frac{1}{2}\int_{\mathbb{R}^2}\big(|\nabla u(x)|^2+V(x)|u(x)|^2\big)\,dx-\frac{a}{q+2}\int_{\mathbb{R}^2}|u(x)|^{q+2}\,dx,
\end{equation}
then  $\|u_\lambda\|_{L^2(\mathbb{R}^2)} =1$ and $E_{a,q}(u_\lambda)$ is bounded from below if $q< 2$ and unbounded if $q>2$. We refer the cases $1<q<2$, $q =2$ and  $q>2$ as $L^2$ subcritical,   critical and  supercritical respectively.  Hence, the constrained minimization problem
\begin{equation}\label{eq:1.5}
d_a(q):=\inf_{u\in \mathcal{H}, \ \int_{\mathbb{R}^2}|u|^2\,dx=1}E_{a,q}(u)
\end{equation}
can only be considered for subcritical and critical cases, where $\mathcal{H}$ is defined by
\[
\mathcal{H}:=\Big\{u\in H^1(\mathbb{R}^2):\int_{\mathbb{R}^2}V(x)|u(x)|^2\,dx<\infty\Big\}.
\]

If $q = 2$, in the attractive case, the system of Bose-Einstein condensates  collapses whenever the particle number increases beyond a critical
value; see \cite{DGPS, HMDBB, KMS, SSH} etc. Mathematically, it was proved in \cite{GS} that there exists a threshold value $a^*>0$ such that $d_a(2)$ is achieved if $0<a<a^*$,  and
there is no minimizer for $d_a(2)$ if $a\geq a^*$. The threshold value $a^*$ is determined in terms of the solution of the nonlinear scalar field equation
\begin{equation}\label{eq:1.7}
-\Delta u+u=u^3\ \ {\rm in} \ \ \mathbb{R}^2,\ \ u\in H^1(\mathbb{R}^2).
\end{equation}
It is known from \cite{K} that problem \eqref{eq:1.7} admits a unique positive solution up to translations. Such a solution is radially symmetric and exponentially decaying at infinity, see for instance, \cite{BL}. Denote by $Q$ in the sequel the positive solution of \eqref{eq:1.7}, which is radially symmetric about the
origin. It was found in \cite{GS} that the threshold value $a^*$ is given by
\begin{equation}\label{eq:1.6}
a^*:=\|Q\|_{L^2(\mathbb{R}^2)}^2.
\end{equation}
Furthermore, if $V$ is a trap potential, that is $V(x)=\prod_{i=1}^n|x-x_i|^{p} \ \ {\rm with} \ \ p>1$, it was shown in \cite{GS} that
symmetry breaking occurs in the GP minimizers. For $a <a^*$ close to $a^*$, the GP functional $E_{a,2}$ has  at least $n$ different
non-negative minimizers, each of which concentrates at a specific global minimum point $x_i$.

The similar symmetry breaking phenomenon was considered in the subcritical case, i.e. $0<q<2$, for the functional  $E_{a,q}$ in \cite{GZZ}. When $q$ approaching $2$, the limit behavior  of the minimizer of $E_{a,q}$ constrained by \eqref{eq:1.3a} is described by the unique positive solution $\varphi_q$ of the nonlinear scalar field equation
\begin{equation}\label{eq:1.9}
-\Delta u+\frac{2}{q}u=\frac{2}{q}u^{q+1}, \ \ q>0, \ \ u\in H^1(\mathbb{R}^2).
\end{equation}

In this paper, we consider the existence of solutions for the supercritical problem
\begin{equation}\label{eq:0.1a}
\left\{
\begin{array}{l}
-\Delta u + V(x) u = \mu_q u + a|u|^q u \quad {\rm in}\quad \mathbb{R}^2,\\
\int_{\mathbb{R}^2}|u|^2\,dx =1,\\
\end{array}
\right.
\end{equation}
as well as the asymptotic behavior of solutions. That is, we will study the case $q>2$. In the sequel, $\mu_q$ denotes the Lagrange multiplier.

Although in the supercritical case, there is no minimizer for the minimization problem \eqref{eq:1.5}, or no ground state solution for
the problem
\begin{equation}\label{eq:1.6b}
-\Delta u + V(x) u = \mu_q u + a|u|^q u \quad {\rm in}\quad \mathbb{R}^2,
\end{equation}
we can find critical points of $E_{a,q}$ constrained on the manifold
\begin{equation}\label{eq:1.6c}
S(1)=\{u\in \mathcal{H}: \int_{\mathbb{R}^2} |u|^2\,dx = 1\}.
\end{equation}
Such a critical point is an excited state solution of \eqref{eq:1.6b}. Actually, for the supercritical case, it was revealed in \cite{BV,JE} that the functional $E_{a,q}$ with $V=0$ has a mountain pass geometry on $S(1)$. Based on this observation, a variational method was developed to apply to various problems, see \cite{BS,BJ} etc.  We will look for critical points of $E_{a,q}$ on $S(1)$. As observed, one critical point of $E_{a,q}$ on $S(1)$ can be found as a local minimizer,  and another one can be obtained by a variant mountain pass theorem. In fact, we will show that the functional $E_{a,q}$ has a mountain pass geometry on $S(1)$, and it implies that there is a  $(PS)$ sequence of $E_{a,q}$. In order to bound the $(PS)$ sequence, inspired of \cite{GH} and \cite{JE} we establish a variant mountain pass theorem, in which the $(PS)$ sequence is found close to the Pohozaev manifold, see section 2 for details.

\bigskip

We assume that the potential function $V\in C^1(\mathbb{R}^2)$ satisfies
\begin{description}
	\item [$(V_1)$]  $\lim_{|x|\rightarrow \infty}V(x)=\infty,\ \ \ \inf_{x\in \mathbb{R}^2}V(x)=0.$
	\item [$(V_2)$] $(q-1)V+x\cdot\nabla V\geq -C_1$ and $|x\cdot\nabla V(x)|\leq C_2(V(x)+1)$,
\end{description}
where $C_i>0$, $i=1,2$.

\bigskip

In the sequel, we choose $a\in (0,a^*)$. Denote $A_{k}=\{u\in S(1)|\int_{\mathbb{R}^2}|\nabla u|^2\,dx\leq k\}$. The local minimizer will be found in $A_{k}$. In order to
study the asymptotic behavior of critical points of $E_{a,q}$, the number $k$ needs to be selected carefully. Actually, we set
\begin{equation}\label{eq:1.11}
{\tau_q^2}=\Big(\frac{2a_q^*}{qa}\Big)^{\frac{2}{q-2}},
\end{equation}
where $a_q^*$ is defined later in \eqref{eq:1.6a}. Then we obtain the following existence results.

\begin{Theorem}\label{thm1}
Suppose $V$ satisfies $(V_1)$ and $(V_2)$. There exists an $\varepsilon_0>0$ such that, for any $q\in (2,2+\varepsilon_0)$,
 $E_{a,q}(u)$  admits a local positive minimizer $u_q$ in $A_{\tau_q^2}$, that is
\[
E_{a,q}(u_q)=\inf_{u\in A_{\tau_q^2}}E_{a,q}(u),
\]
and a second positive critical point $v_q$  at the mountain pass level on $S(1)$.
\end{Theorem}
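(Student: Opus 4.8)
The plan is to construct the two critical points separately, using the parameter $\tau_q$ to decouple the local geometry from the supercritical behaviour at infinity. For the local minimizer, I would first show that on the set $A_{\tau_q^2}$ the functional $E_{a,q}$ is bounded below and coercive relative to the $\mathcal H$-norm: since $\int|u|^2 = 1$ and $\int|\nabla u|^2 \le \tau_q^2$, the Gagliardo--Nirenberg inequality associated with \eqref{eq:1.9} controls $\int|u|^{q+2}$ by $(\int|\nabla u|^2)^{q/2}$ times a constant involving $a_q^*$, and the choice \eqref{eq:1.11} of $\tau_q$ is exactly calibrated so that $\frac{a}{q+2}\int|u|^{q+2} \le \frac12\int|\nabla u|^2$ on the boundary $\{\int|\nabla u|^2 = \tau_q^2\}$, hence the energy on the boundary sphere strictly exceeds the infimum over the interior. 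Then $\inf_{A_{\tau_q^2}} E_{a,q}$ is attained: take a minimizing sequence, use $(V_1)$ to get compactness of the embedding $\mathcal H \hookrightarrow L^p(\mathbb R^2)$ for $p<\infty$ (the diverging potential confines mass), extract a weakly convergent subsequence, pass to the limit using weak lower semicontinuity of the quadratic part and strong convergence of the $L^{q+2}$ term, and check the limit still lies in $A_{\tau_q^2}$; the strict inequality on the boundary guarantees the minimizer $u_q$ is interior, so it is a free critical point on $S(1)$ with some Lagrange multiplier $\mu_q$. Positivity follows by replacing $u_q$ with $|u_q|$ and invoking the strong maximum principle.

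For the second solution I would set up the mountain pass geometry of $E_{a,q}$ on $S(1)$ in the spirit of \cite{BV,JE}: for $u\in S(1)$ consider the scaling $u_t(x) = t\,u(tx)$, which stays on $S(1)$, has $\int|\nabla u_t|^2 = t^2\int|\nabla u|^2$ and $\int|u_t|^{q+2} = t^q\int|u|^{q+2}$, while the potential term $\int V|u_t|^2$ must be handled via $(V_2)$. Along this path $E_{a,q}(u_t)$ behaves like (positive)$\,t^2 -$(positive)$\,t^q$ with $q>2$, so it first increases past the energy level of $u_q$ (when $t$ is such that $\int|\nabla u_t|^2$ crosses $\tau_q^2$) and then tends to $-\infty$; picking a fixed $\varphi\in S(1)$ with small $\int|\nabla\varphi|^2$ and a large dilation $\varphi_{T}$ with $E_{a,q}(\varphi_T) < \inf_{A_{\tau_q^2}}E_{a,q}$ produces the two endpoints of a mountain pass. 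The linking value $c_q := \inf_{\gamma}\max_{t}E_{a,q}(\gamma(t))$ over paths in $S(1)$ joining these points is then $>\inf_{A_{\tau_q^2}}E_{a,q}$, and a minimax argument yields a Palais--Smale sequence $\{w_n\}\subset S(1)$ at level $c_q$.

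The main obstacle — and where I expect the real work to lie — is the boundedness and compactness of this $(PS)$ sequence, because in the $L^2$-supercritical regime the $H^1$-norm of a $(PS)$ sequence is not controlled a priori. Here I would follow the refined minimax scheme indicated in the introduction (inspired by \cite{GH,JE}): instead of a plain minimax I would use an augmented functional on $S(1)\times\mathbb R$ incorporating the dilation parameter, so that the resulting $(PS)$ sequence $\{w_n\}$ additionally satisfies the Pohozaev identity asymptotically, i.e. $P_q(w_n)\to 0$ where $P_q(u) = \int|\nabla u|^2 + \frac12\int (x\cdot\nabla V)|u|^2 - \frac{aq}{2(q+2)}\int|u|^{q+2}$. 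Combining $E_{a,q}(w_n)\to c_q$, $P_q(w_n)\to 0$, and assumption $(V_2)$ (which makes $(q-1)V + x\cdot\nabla V$ bounded below and $|x\cdot\nabla V|\lesssim V+1$) gives a linear combination that controls $\int|\nabla w_n|^2 + \int V|w_n|^2$, hence $\{w_n\}$ is bounded in $\mathcal H$. Compactness then again comes from the confining potential $(V_1)$: $\{w_n\}$ converges strongly in $L^2\cap L^{q+2}$, the weak limit $v_q$ lies in $S(1)$, and standard arguments upgrade weak to strong convergence in $\mathcal H$, so $v_q$ is a critical point at level $c_q$ with its own multiplier. Positivity of $v_q$ follows as before. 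The restriction $q\in(2,2+\varepsilon_0)$ enters to ensure $\tau_q^2$ is large enough that $A_{\tau_q^2}$ genuinely contains admissible functions and that the Gagliardo--Nirenberg constant $a_q^*\to a^*$ behaves continuously, keeping all the inequalities above uniform as $q\to 2_+$.
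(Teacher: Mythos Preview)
Your outline is essentially the paper's own route: Proposition~3.1 obtains the local minimizer exactly via the Gagliardo--Nirenberg lower bound on $\partial A_{\tau_q^2}$ together with the compact embedding $\mathcal H\hookrightarrow L^p$ coming from $(V_1)$, and Proposition~3.2 obtains $v_q$ by the augmented functional $\mathcal E_{a,q}(u,s)=E_{a,q}(e^su(e^s\cdot))$ so that the resulting $(PS)$ sequence satisfies $Q_q(u_n)\to 0$, after which the algebraic identity $qE_{a,q}(u)-Q_q(u)=\tfrac{q-2}{2}\int|\nabla u|^2+\tfrac12\int(qV+x\cdot\nabla V)u^2$ combined with $(V_2)$ gives $\mathcal H$-boundedness.

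Two points deserve sharpening. First, for the local minimizer you need more than the boundary lower bound: you must exhibit a test function \emph{strictly inside} $A_{\tau_q^2}$ whose energy is below $\inf_{\partial A_{\tau_q^2}}E_{a,q}$; the paper does this with a specific rescaling $\varphi^{t_0}$ and uses that $\tau_q^2\to\infty$ as $q\to 2_+$ to make the gap $\frac{q-2}{2q}\tau_q^2$ dominate the potential contribution. Second, ``positivity follows as before'' is not quite right for $v_q$: replacing a mountain-pass critical point by its modulus does not produce a critical point. The paper's mechanism (Proposition~2.1 and Lemma~3.2) is to take the minimizing paths $\gamma_n=(|h_n|,0)$ nonnegative from the start and then use that Ghoussoub's theorem yields a $(PS)$ sequence $(w_n,s_n)$ with $\operatorname{dist}((w_n,s_n),\gamma_n)\to 0$; this forces the strong $\mathcal H$-limit $v_q$ to be nonnegative, whence $v_q>0$ by the strong maximum principle. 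Finally, check your Pohozaev functional: the correct sign and coefficient are $Q_q(u)=\int|\nabla u|^2-\tfrac12\int x\cdot\nabla V\,|u|^2-\tfrac{qa}{q+2}\int|u|^{q+2}$.
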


We may verify that the trap potential $V$, which has $n\geq 1$ isolated
minima, and that in their vicinity V behaves like a power of the distance from
these points, satisfies conditions $(V_1)$ and $(V_2)$. Precisely, we assume for $n\geq 1$ that

\bigskip

$(V_e)$  $V(x)=\prod_{i=1}^n|x-x_i|^{p_i} \ \ {\rm with} \ \ p_i\geq1, \,i=1,\cdots,n.$

\bigskip

Hence, we  have in particular the following result.

\begin{Corollary}\label{cor1} Suppose $V$ satisfies condition $(V_e)$. Then, $V$ satisfies $(V_1)$ and $(V_2)$. Consequently, the conclusions in Theorem \ref{thm1} also hold.
\end{Corollary}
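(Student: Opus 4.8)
\noindent The plan is to read Corollary~\ref{cor1} off Theorem~\ref{thm1}: it suffices to check that every $V$ of the form $(V_e)$, i.e. $V(x)=\prod_{i=1}^{n}|x-x_i|^{p_i}$ with $p_i\ge 1$, satisfies $(V_1)$ and $(V_2)$. (Such a $V$ is of class $C^1$ on $\mathbb{R}^2\setminus\{x_1,\dots,x_n\}$, and $C^1$ on all of $\mathbb{R}^2$ when each $p_i>1$; in every case $\nabla V$ exists off the finite set $\{x_i\}$, which is all that the variational and Pohozaev machinery behind Theorem~\ref{thm1} uses.) Condition $(V_1)$ is immediate: $V\ge 0$ everywhere and $V(x_1)=0$, so $\inf_{\mathbb{R}^2}V=0$; and, setting $P:=\sum_{i=1}^{n}p_i$, for $|x|\ge 2\max_i|x_i|$ one has $|x-x_i|\ge|x|/2$ for all $i$, hence $V(x)\ge(|x|/2)^{P}\to\infty$ as $|x|\to\infty$.

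The real work is $(V_2)$, which I would organise around the logarithmic gradient. On $\mathbb{R}^2\setminus\{x_i\}$,
\[
\frac{\nabla V(x)}{V(x)}=\sum_{i=1}^{n}p_i\,\frac{x-x_i}{|x-x_i|^2},
\qquad\text{hence}\qquad
x\cdot\nabla V(x)=P\,V(x)+R(x),
\]
where $R(x):=V(x)\sum_{i=1}^{n}p_i\,\dfrac{x_i\cdot(x-x_i)}{|x-x_i|^{2}}$ is the lower-order remainder. The key step is the pointwise bound $|R(x)|\le C\,(V(x)+1)$ on $\mathbb{R}^2$, with $C$ depending only on $P$ and on the data $x_i,p_i$ — crucially not on $q$. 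By Cauchy--Schwarz, $|R(x)|\le\sum_i p_i|x_i|\,g_i(x)$ with $g_i(x):=V(x)/|x-x_i|=|x-x_i|^{\,p_i-1}\prod_{j\ne i}|x-x_j|^{p_j}$. Here $p_i\ge1$ is used decisively: the exponent $p_i-1$ being nonnegative, each $g_i$ is a genuine continuous nonnegative function on all of $\mathbb{R}^2$ (the $1/|x-x_i|$ singularity of $R$ is exactly cancelled by the vanishing factor of $V$), and $g_i(x)=O(|x|^{P-1})$ at infinity. Since $V(x)\ge(|x|/2)^{P}$ for $|x|$ large, any continuous function that is $O(|x|^{P-1})$ is bounded by a multiple of $V+1$ (on a large ball by compactness; outside it because $|x|^{P-1}\le 2^{P}V(x)/|x|$), and the bound on $R$ follows.

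With this in hand, $(V_2)$ drops out. The growth inequality is $|x\cdot\nabla V|\le P\,V+|R|\le(P+C)(V+1)$, so $C_2:=P+C$ works. For the lower bound, $(q-1)V+x\cdot\nabla V=(q-1+P)V+R$; on a fixed ball $\overline{B_{R_0}}$ this is a bounded function of $x$ (since $V$ is continuous and $R$ bounded there, the $g_i$ being continuous), hence it exceeds some $-C_1$ with $C_1$ uniform for $q$ in a bounded neighbourhood of $2$; and for $|x|\ge R_0$, after enlarging $R_0$ so that $|R(x)|\le V(x)$ there (legitimate because $|R|=O(|x|^{P-1})=o(V)$), one gets $(q-1+P)V+R\ge(q-2+P)V\ge0$ as $q>2$. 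Thus $(q-1)V+x\cdot\nabla V\ge -C_1$ off $\{x_i\}$, and Theorem~\ref{thm1} applies to produce the local minimizer $u_q$ and the mountain-pass solution $v_q$.

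I expect the one genuinely delicate point to be the borderline exponent $p_i=1$: there $\nabla V$ fails to extend continuously across $x_i$ and $R$ itself need not be continuous at $x_i$, so the argument must exploit the product structure of $V$ — not any regularity of $V$ at its minima — to see that the factor $|x-x_i|^{p_i}$ absorbs the $1/|x-x_i|$ blow-up and leaves $R$ locally bounded. Once that is noted, together with the fact that all constants in $(V_2)$ can be chosen independent of $q$, the remaining verifications are routine.
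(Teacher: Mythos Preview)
Your proof is correct and follows essentially the same route as the paper: both establish the identity $x\cdot\nabla V = PV + R$ with remainder $R$ bounded term by term via $|R|\le\sum_i p_i|x_i|\,V/|x-x_i|$, using $p_i\ge 1$ crucially to see that each $V/|x-x_i|=|x-x_i|^{p_i-1}\prod_{j\ne i}|x-x_j|^{p_j}$ is continuous and $O(|x|^{P-1})$ at infinity, hence controlled by $V+1$. The only cosmetic difference is that the paper organises the near/far split around each $x_i$ with explicit thresholds $1/\delta_i$ chosen so that $\sum_i p_i|x_i|\delta_i<P/2$, thereby obtaining the quantitative lower bound $x\cdot\nabla V\ge\tfrac{P}{2}V-C$ directly (a form reused in Section~4), whereas you split globally into a compact ball and its complement; for Corollary~\ref{cor1} itself both arguments are complete.
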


Next, we study the asymptotic behavior of the local minimizer $u_q$ and the mountain pass point $v_q$ as $q\rightarrow 2_+$. For the supercritical case, it seems that no works concerning the asymptotic behavior of solutions can be found in the literature. In this paper, we give a precise description of the asymptotic behavior of solutions $u_q$ and $v_q$. We commence with the following result.

\begin{Theorem}\label{thm2} Suppose $V$ satisfies $(V_1)$ and $(V_2)$. There hold

$(i)$ $u_q\rightarrow u_0$ in $\mathcal{H}$ as $q\to 2_+$, where $u_0\in H^1(\mathbb{R}^2)$ is a global minimizer of $d_a(2)$, which is defined in \eqref{eq:1.5};

$(ii)$
\[
\lim_{q\rightarrow 2_+}\|v_q\|_{H^1(\mathbb{R}^2)}=\infty.
\]
\end{Theorem}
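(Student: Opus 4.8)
The plan is to handle both assertions through the same circle of ideas: uniform energy bounds on $\mathbb{R}^2$, the sharp Gagliardo--Nirenberg inequality whose optimal constant is governed by the ground state $\varphi_q$ of \eqref{eq:1.9} (so that $\tfrac{a}{q+2}\|u\|_{q+2}^{q+2}\le\tfrac{a}{2a_q^*}\|\nabla u\|_2^q$ when $\|u\|_2=1$), the Pohozaev identity obtained from the mass-preserving scaling $u\mapsto s\,u(s\,\cdot)$, and the basic observation that $\tau_q^2=(2a_q^*/(qa))^{2/(q-2)}\to\infty$ as $q\to2_+$, since $a_q^*\to a^*$ and $a<a^*$ force the base $2a_q^*/(qa)$ to converge to $a^*/a>1$ while the exponent $2/(q-2)$ diverges.

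For $(i)$ I would fix a minimizer $\varphi_0$ of $d_a(2)$ (it exists since $a<a^*$). Because $\tau_q^2\to\infty$, $\varphi_0\in A_{\tau_q^2}$ for $q$ near $2$, so $E_{a,q}(u_q)\le E_{a,q}(\varphi_0)\to E_{a,2}(\varphi_0)=d_a(2)$ by dominated convergence, giving $\limsup_{q\to2_+}E_{a,q}(u_q)\le d_a(2)$. The key step is a uniform bound on $\|u_q\|_{\mathcal{H}}$: writing $g_q(t)=\tfrac12 t-\tfrac{a}{2a_q^*}t^{q/2}$, the sharp Gagliardo--Nirenberg inequality and $V\ge0$ give $E_{a,q}(u)\ge g_q(\|\nabla u\|_2^2)$ for every $u\in S(1)$, and $g_q$ is strictly increasing on $[0,\tau_q^2]$ with $g_q(0)=0$, while on each fixed bounded $t$-interval $g_q\to g_2(t)=\tfrac12(1-a/a^*)t$ uniformly, a function with a fixed positive slope. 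Hence $E_{a,q}(u_q)\le d_a(2)+1$ forces $\|\nabla u_q\|_2^2$ below a constant $T_0$ independent of $q$ near $2$, and then $\int_{\mathbb{R}^2}V u_q^2\,dx\le 2E_{a,q}(u_q)+\tfrac{a}{a_q^*}T_0^{q/2}$ is bounded as well. Since $V(x)\to\infty$, the embedding $\mathcal{H}\hookrightarrow L^p(\mathbb{R}^2)$ is compact for $p\in[2,\infty)$, so along a subsequence $u_q\rightharpoonup u_0$ in $\mathcal{H}$ and $u_q\to u_0$ strongly in every $L^p$; thus $u_0\in S(1)$, $u_0\ge0$, and $\|u_q\|_{q+2}^{q+2}\to\|u_0\|_4^4$. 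Weak lower semicontinuity gives $\liminf E_{a,q}(u_q)\ge E_{a,2}(u_0)\ge d_a(2)$, so $E_{a,q}(u_q)\to d_a(2)=E_{a,2}(u_0)$ and $u_0$ is a global minimizer of $d_a(2)$. Finally, convergence of the energies together with $\|u_q\|_{q+2}^{q+2}\to\|u_0\|_4^4$ yields $\|\nabla u_q\|_2^2+\int Vu_q^2\,dx\to\|\nabla u_0\|_2^2+\int Vu_0^2\,dx$; as each summand is weakly l.s.c. and their sum converges, each converges, whence $\|u_q\|_{\mathcal{H}}\to\|u_0\|_{\mathcal{H}}$, and combined with weak convergence this upgrades to $u_q\to u_0$ in $\mathcal{H}$.

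For $(ii)$ the first point is that the mountain pass level $c_q:=E_{a,q}(v_q)$ tends to $\infty$. Any admissible path joins the valley $A_{\tau_q^2}$ containing $u_q$ to an endpoint $e$ with $\|\nabla e\|_2^2>\tau_q^2$ and $E_{a,q}(e)<E_{a,q}(u_q)$, so by continuity it meets $\{\|\nabla u\|_2^2=\tau_q^2\}$, where $E_{a,q}(u)\ge g_q(\tau_q^2)=\tfrac{q-2}{2q}\tau_q^2$; hence $c_q\ge\tfrac{q-2}{2q}\tau_q^2$, and since $\tau_q^2\ge(1+\delta)^{2/(q-2)}$ for some $\delta>0$ when $q$ is close to $2$, the exponential growth of $\tau_q^2$ dominates the linear factor $\tfrac{q-2}{2q}$, so $c_q\to\infty$. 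Now suppose, for contradiction, that $\|v_q\|_{H^1(\mathbb{R}^2)}$ stays bounded along a sequence $q\to2_+$; then $\|\nabla v_q\|_2$ is bounded, so by Gagliardo--Nirenberg $\|v_q\|_{q+2}^{q+2}$ is bounded, and the Pohozaev identity $\|\nabla v_q\|_2^2-\tfrac12\int(x\cdot\nabla V)v_q^2\,dx=\tfrac{aq}{q+2}\|v_q\|_{q+2}^{q+2}$ shows $\int(x\cdot\nabla V)v_q^2\,dx$ is bounded; using this together with $(V_1)$--$(V_2)$ one controls $\int V v_q^2\,dx$ as well, so all three terms of $E_{a,q}(v_q)=\tfrac12\|\nabla v_q\|_2^2+\tfrac12\int Vv_q^2\,dx-\tfrac{a}{q+2}\|v_q\|_{q+2}^{q+2}$ remain bounded, contradicting $c_q\to\infty$. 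Hence $\|v_q\|_{H^1}\to\infty$.

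I expect the obstacles to lie in exactly two places. In $(i)$ it is the uniform $\mathcal{H}$-bound: one must genuinely use that $\tau_q^2$ is the maximum point of the fibering function $g_q$ and that the limit $g_2$ is nondegenerate at the origin, since a naive use of the energy identity and the Pohozaev relation only yields $\|\nabla u_q\|_2^2=O(1/(q-2))$, which blows up. In $(ii)$ the delicate step is passing from ``$c_q\to\infty$'' to ``$\|\nabla v_q\|_2\to\infty$'': a priori $c_q$ could be large merely because $\int V v_q^2\,dx$ is large while $v_q$ spreads out into the region where $V$ is big, so one has to invoke that $v_q$ solves the Euler--Lagrange equation and exploit $(V_1)$--$(V_2)$ to exclude this spreading and force the blow-up into the Dirichlet norm.
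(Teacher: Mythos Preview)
For part $(i)$ your argument is correct and in fact cleaner than the paper's. The paper establishes the uniform $\mathcal{H}$-bound on $u_q$ (its Lemma~3.4) by a contradiction involving a full blow-up analysis: it rescales $u_{q_k}$ by $\|\nabla u_{q_k}\|_2$, uses concentration--compactness to produce a nontrivial limit $f$ solving $-\Delta f+f=af^3$, and then derives $a\ge a^*$ from the sharp Gagliardo--Nirenberg inequality. Your observation that $E_{a,q}(u_q)\ge g_q(\|\nabla u_q\|_2^2)$ with $g_q$ increasing on $[0,\tau_q^2]$ and $g_q\to g_2(t)=\tfrac12(1-a/a^*)t$ on bounded sets bypasses all of that. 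Likewise the paper obtains strong $\mathcal{H}$-convergence by subtracting the Euler--Lagrange equations for $u_q$ and $u_0$ and applying Br\'ezis--Lieb, whereas your energy-convergence/weak-l.s.c. route is more direct.

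For part $(ii)$ there is a genuine gap precisely where you flagged it. From boundedness of $\|\nabla v_q\|_2$ and $\|v_q\|_{q+2}$ together with the Pohozaev identity you correctly obtain that $\int(x\cdot\nabla V)v_q^2\,dx$ is bounded; but $(V_1)$--$(V_2)$ do \emph{not} let you convert this into a bound on $\int V v_q^2\,dx$. Both pieces of $(V_2)$, namely $|x\cdot\nabla V|\le C_2(V+1)$ and $(q-1)V+x\cdot\nabla V\ge -C_1$, control $x\cdot\nabla V$ in terms of $V$, not the reverse. Concretely, a potential equal to $\log|x|$ for large $|x|$ (smoothly modified near the origin so that $(V_1)$ holds) satisfies $(V_1)$--$(V_2)$ with $x\cdot\nabla V$ uniformly bounded while $V$ is unbounded, so the implication you need is false under the stated hypotheses. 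The paper closes this gap by a different device: it accepts that $\int V v_q^2\to\infty$, reads off from the Euler--Lagrange identity $\mu_q=\|\nabla v_q\|_2^2+\int V v_q^2-a\|v_q\|_{q+2}^{q+2}$ that $\mu_q\to\infty$, and then contradicts this by testing the equation for $v_q$ against the positive first eigenfunction $\varphi_1$ of $-\Delta+V$, obtaining $\mu_q<\lambda_1$ because $v_q>0$ and $a\int v_q^{q+1}\varphi_1>0$. The positivity of $v_q$ is essential, and this eigenvalue comparison is the missing ingredient in your sketch.
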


\bigskip

From Theorem \ref{thm2}, we see that $v_q$ will possibly blow up due to its $H^1$ norm tends to infinity. This allows us to study further the asymptotic behavior of $v_q$.
\begin{Theorem}\label{thm3} Suppose $0<a<a^*$ and the potential $V$ satisfies $(V_e)$. Then, for any sequence $\{q_k\}$ with $q_k\to  2_+$ as $k\rightarrow \infty$, there exist  a subsequence of $\{q_k\}$, still denoted by $\{q_k\}$, $\{x_{k}\}\subset\mathbb{R}^2$, and $\beta>0$ such that
\begin{equation}\label{eq:1.13}
\frac{1}{\|\nabla v_{q_k}\|_2}v_{q_k}\Big(\frac{x+x_k}{\|\nabla v_{q_k}\|_2}\Big)\to \frac{\beta}{\|Q\|_2}Q(\beta x)
\end{equation}
strongly in $L^2(\mathbb{R}^2)$.
\end{Theorem}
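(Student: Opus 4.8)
\medskip
\noindent\textbf{Outline of the argument.}
The plan is to carry out a blow-up analysis of $v_{q_k}$ at its diverging kinetic-energy scale. Set $\varepsilon_k:=\|\nabla v_{q_k}\|_{2}^{-1}$, which tends to $0$ by Theorem~\ref{thm2}$(ii)$, and put
\[
w_k(x):=\varepsilon_k\,v_{q_k}(\varepsilon_k x+x_k),
\]
with centres $x_k\in\mathbb{R}^2$ to be fixed later. A change of variables gives $\|w_k\|_{2}=\|\nabla w_k\|_{2}=1$, so $(w_k)$ is bounded in $H^1(\mathbb{R}^2)$, $w_k>0$, and $w_k$ satisfies
\[
-\Delta w_k+\varepsilon_k^{2}V(\varepsilon_k x+x_k)\,w_k=\varepsilon_k^{2}\mu_{q_k}\,w_k+a\,\varepsilon_k^{\,2-q_k}|w_k|^{q_k}w_k\quad\text{in }\mathbb{R}^2 .
\]
Since $\|w_k\|_2\equiv1$, proving \eqref{eq:1.13} will reduce to showing, along a subsequence and for a good choice of $x_k$, that $w_k\rightharpoonup w=\frac{\beta}{\|Q\|_2}Q(\beta\cdot)$ weakly in $L^2$; indeed $\|w\|_2=1=\|w_k\|_2$ then gives $\|w_k-w\|_2^2=\|w_k\|_2^2-2\langle w_k,w\rangle+\|w\|_2^2\to0$, i.e. strong $L^2$ convergence.

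\emph{Step 1 (sharp energy and scaling estimates).} Testing the equation for $v_{q_k}$ with $v_{q_k}$ and with $x\cdot\nabla v_{q_k}$ (the latter using $(V_1)$--$(V_2)$) gives the Nehari and Pohozaev identities, which combine with the energy identity into
\[
E_{a,q_k}(v_{q_k})=\frac{q_k-2}{2q_k}\,\|\nabla v_{q_k}\|_{2}^{2}+\frac{1}{2q_k}\int_{\mathbb{R}^2}\big(q_kV+x\cdot\nabla V\big)|v_{q_k}|^{2}\,dx .
\]
On the other hand, by the sharp Gagliardo--Nirenberg inequality for the exponent $q_k$ (extremised by the solution $\varphi_{q_k}$ of \eqref{eq:1.9}, whose best constant converges as $q_k\to2_+$ to that of \eqref{eq:1.7}) together with the identity $(\tau_{q_k}^2)^{(q_k-2)/2}=2a_{q_k}^{*}/(q_ka)$, one has $E_{a,q_k}\ge\frac{q_k-2}{2q_k}\tau_{q_k}^{2}$ on $\partial A_{\tau_{q_k}^2}=\{\|\nabla u\|_2^2=\tau_{q_k}^2\}$ and $E_{a,q_k}\ge0$ on $A_{\tau_{q_k}^2}$; since every path in $S(1)$ from a neighbourhood of the local minimiser $u_{q_k}$ to a point where $E_{a,q_k}<0$ must cross $\partial A_{\tau_{q_k}^2}$, the mountain pass level obeys $c_{q_k}\ge\frac{q_k-2}{2q_k}\tau_{q_k}^{2}$. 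The dilation path built from the $L^2$-normalised $\varphi_{q_k}$ centred at a minimum $x_i$ of $V$ yields the matching upper bound $c_{q_k}\le\frac{q_k-2}{2q_k}\tau_{q_k}^{2}(1+o(1))$ (this is the construction behind Theorems~\ref{thm1}--\ref{thm2}), so $c_{q_k}=\frac{q_k-2}{2q_k}\tau_{q_k}^{2}(1+o(1))$. Inserting this into the identity above, together with $(V_2)$ and the fact that the variant mountain pass theorem produces a critical point close to the Pohozaev manifold, yields that $\|\nabla v_{q_k}\|_{2}^{2}$ is comparable to $\tau_{q_k}^{2}$; consequently $\int V|v_{q_k}|^2\le(q_k-2)\tau_{q_k}^2(1+o(1))+C_1$, the potential terms $\varepsilon_k^{2}\!\int V(\varepsilon_k y+x_k)|w_k|^{2}\,dy$ and $\varepsilon_k^{2}\!\int(\varepsilon_k y+x_k)\cdot\nabla V(\varepsilon_k y+x_k)|w_k|^{2}\,dy$ tend to $0$, $\varepsilon_k^{2}\mu_{q_k}$ is bounded, and --- crucially ---
\[
a\,\varepsilon_k^{\,2-q_k}=a\big(\|\nabla v_{q_k}\|_{2}^{2}\big)^{(q_k-2)/2}=a\,(\tau_{q_k}^2)^{(q_k-2)/2}(1+o(1))\longrightarrow a\cdot\frac{a^{*}}{a}=a^{*} ,
\]
because $(\|\nabla v_{q_k}\|_{2}^{2}/\tau_{q_k}^{2})^{(q_k-2)/2}\to1$ (bounded base, exponent tending to $0$) and $(\tau_{q_k}^2)^{(q_k-2)/2}\to a^{*}/a$.

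\emph{Step 2 (non-vanishing, limit equation, profile).} The rescaled Pohozaev identity $\frac{q_ka}{q_k+2}\varepsilon_k^{\,2-q_k}\|w_k\|_{q_k+2}^{q_k+2}=1+o(1)$ and the boundedness of $\varepsilon_k^{\,2-q_k}$ force $\|w_k\|_{q_k+2}$ to stay bounded away from $0$; by Lions' vanishing lemma there are $R,\delta>0$ and $y_k$ with $\int_{B_R(y_k)}|w_k|^2\ge\delta$, and after replacing $x_k$ by $x_k+\varepsilon_k y_k$ we may assume $w_k\rightharpoonup w$ in $H^1(\mathbb{R}^2)$, in $L^{p}_{\mathrm{loc}}$ and a.e., with $w\ge0$, $w\not\equiv0$. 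Passing to a further subsequence with $\varepsilon_k^{2}\mu_{q_k}\to-\beta_0$, using Step~1 to discard the potential term and local compactness to pass to the limit in the nonlinearity (recall $q_k\to2$), one finds that $w$ solves $-\Delta w+\beta_0 w=a^{*}w^3$ in $\mathbb{R}^2$; since $w\not\equiv0$, the Pohozaev identity for this equation forces $\beta_0>0$, so write $\beta_0=\beta^2$. By scaling and the uniqueness of the positive solution of \eqref{eq:1.7} (\cite{K}), $w(x)=\frac{\beta}{\sqrt{a^{*}}}Q(\beta(x-z))=\frac{\beta}{\|Q\|_2}Q(\beta(x-z))$ for some $z\in\mathbb{R}^2$, and one last translation of $x_k$ arranges $z=0$.

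\emph{Step 3 (conclusion and main obstacle).} By Step~2, $w=\frac{\beta}{\|Q\|_2}Q(\beta\cdot)$ has $\|w\|_2^2=\|Q\|_2^2/a^{*}=1$; since $w_k\rightharpoonup w$ in $L^2$ and $\|w_k\|_2=\|w\|_2=1$, we get $\|w_k-w\|_2\to0$, i.e. $w_k\to w$ strongly in $L^2(\mathbb{R}^2)$, which is precisely \eqref{eq:1.13}. The main obstacle is the a priori analysis of Step~1: establishing the matching lower and upper bounds for the mountain pass level (whence the comparability $\|\nabla v_{q_k}\|_2^2\asymp\tau_{q_k}^2$ and the key limit $a\varepsilon_k^{\,2-q_k}\to a^{*}$) and showing that the potential contributions become negligible after the blow-up rescaling; once these hold, the identification of the limiting profile through the uniqueness theorem for \eqref{eq:1.7} and the (then automatic) conservation of mass are routine.
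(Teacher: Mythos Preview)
Your overall blow-up strategy (rescale by $\varepsilon_k=\|\nabla v_{q_k}\|_2^{-1}$, use Lions' lemma to avoid vanishing, pass to a limit equation, invoke uniqueness of $Q$, and upgrade weak to strong $L^2$ convergence via conservation of mass) is the same as the paper's. The gap is in Step~1, and it is not cosmetic.

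You assert that the sharp energy identity together with the level estimate $c_{q_k}=\frac{q_k-2}{2q_k}\tau_{q_k}^2(1+o(1))$ ``yields that $\|\nabla v_{q_k}\|_2^2$ is comparable to $\tau_{q_k}^2$'' and that therefore the rescaled potential terms tend to~$0$. Neither claim is justified. From
\[
\frac{q_k-2}{2q_k}\|\nabla v_{q_k}\|_2^2+\frac{1}{2q_k}\int_{\mathbb{R}^2}(q_kV+x\cdot\nabla V)|v_{q_k}|^2\,dx=\frac{q_k-2}{2q_k}\tau_{q_k}^2+o(1)
\]
and $(V_2)$ one only gets the paper's Proposition~\ref{Pro4.2}:
\[
C(q_k-2)\,\tau_{q_k}^2\ \le\ \|\nabla v_{q_k}\|_2^2\ \le\ \tau_{q_k}^2+\frac{C}{q_k-2},
\]
with the lower bound obtained by contradiction. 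The extra factor $(q_k-2)$ on the left cannot be removed under $(V_e)$ in general (the paper removes it only for $V(x)=|x|^p$ in Corollary~\ref{cor2}, using radial symmetry of $v_q$ to get a uniform exponential decay after rescaling). If one is in the regime $\|\nabla v_{q_k}\|_2^2=O\big((q_k-2)\tau_{q_k}^2\big)$ (the paper's case~(i)), then $\varepsilon_k^{-2}$ and $\int V|v_{q_k}|^2$ are of the \emph{same} order $(q_k-2)\tau_{q_k}^2$, so $\varepsilon_k^{2}\!\int V|v_{q_k}|^2$ is merely bounded, and the pointwise coefficient $\varepsilon_k^{2}V(\varepsilon_k x+x_k)$ can converge to a \emph{positive} constant $\mu$ rather than to~$0$. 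The paper therefore splits into two cases according to whether $\|\nabla v_{q_k}\|_2^2/\big((q_k-2)\tau_{q_k}^2\big)$ stays bounded or diverges, and in case~(i) further analyses the behaviour of $\varepsilon_{q_k}^{2}V(\varepsilon_{q_k}(x+y_{q_k}))$ using the product structure of $(V_e)$ and the location of $\varepsilon_{q_k}y_{q_k}$. The limiting equation then reads $-\Delta w+(\mu+\beta^2)w=a^*w^3$ with $\mu\ge0$ possibly nonzero; the subcase $\mu=\beta^2=0$ is excluded by a Liouville argument, and in every remaining subcase the profile is a rescaled $Q$ with full $L^2$ mass. Your Step~2 treats only the situation $\mu=0$ and thus misses half of the analysis.

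A secondary point: your derivation of $a\,\varepsilon_k^{2-q_k}\to a^*$ writes $(\|\nabla v_{q_k}\|_2^2/\tau_{q_k}^2)^{(q_k-2)/2}\to1$ ``(bounded base, exponent tending to~$0$)''. In case~(i) the base tends to~$0$, so this reasoning fails as written. The conclusion is still true under the weaker two-sided bound above, because $(q_k-2)^{(q_k-2)/2}\to1$ and $(\tau_{q_k}^2)^{(q_k-2)/2}=2a_{q_k}^*/(q_ka)\to a^*/a$, but you need to argue it from those bounds rather than from a comparability you have not established.
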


\bigskip

The proof of Theorem \ref{thm3} is delicate. In the proof, we will estimate the energy $E_{a,q}(v_q)$ of $v_q$. To this end, we need carefully to choose a path and estimate the energy on it. Meanwhile, we find that
\[
E_{a,q}(v_q)=\frac{q-2}{2q}\Big(\frac{2a_q^*}{qa}\Big)^{\frac{2}{q-2}}+o(1)
\]
as $q\rightarrow 2_+$. We remark that $E_{a,q}(v_q)\to +\infty$ whenever $q\rightarrow 2_+$ in contrast with the subcritical case, where  with the choice of $a>a^*$ the energy $E_{a,q}(w_q)$ of the minimizer $w_q$
goes to $-\infty$ if $q\to 2_-$, see \cite{GZZ}. Essential difficulties will be encountered in estimating $\int_{\mathbb{R}^2}|\nabla v_q|^2\,dx$ and $\int_{\mathbb{R}^2}V(x)v_q^2\,dx$, which can not be done as simple as the subcritical and critical cases. Moreover, although one expects an estimate for these two terms in the supercritical case similar to that for the subcritical and critical cases, it is not able to carry through. Fortunately, we eventually find a suitable estimate enough to serve our purpose.

Finally, we consider a special case $V(x)=|x|^p$. In this case, we have a better description of the limiting function.

\begin{Corollary}\label{cor2} Suppose $0<a<a^*$ and $V(x)=|x|^{p}$, $p\geq1$. Then, for any sequence $\{q_k\}$, $q_k\to  2_+$ as $k\rightarrow \infty$, there exists a subsequence of $\{q_k\}$, still denoted by $\{q_k\}$, such that
\begin{equation}\label{eq:1.13a}
\frac{1}{\|\nabla v_{q_k}\|_2}v_{q_k}\Big(\frac{x}{\|\nabla v_{q_k}\|_2}\Big)\to \frac{1}{\|Q\|_2}Q(x)
\end{equation}
strongly in $L^2(\mathbb{R}^2)$ and
\[
\lim_{k\to\infty}\tau_{q_k}^{-2}\|\nabla v_{q_k}\|_2^2 = 1.
\]
\end{Corollary}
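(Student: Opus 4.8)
The plan is to deduce Corollary~\ref{cor2} from Theorem~\ref{thm3} by exploiting the radial symmetry of $V(x)=|x|^{p}$, the Pohozaev identity for $v_q$, and the energy expansion $E_{a,q}(v_q)=\frac{q-2}{2q}\tau_q^{2}+o(1)$ recorded after Theorem~\ref{thm3}. Since $V$ is radially symmetric we may assume the mountain pass solution $v_q$ is radially symmetric (performing the construction in the radial subspace; this affects neither the mountain pass level nor the validity of Theorems~\ref{thm2}--\ref{thm3} and of the energy expansion for $v_q$). Write $\e_k=\|\nabla v_{q_k}\|_2^{-1}$, $t_k=\e_k^{-2}=\|\nabla v_{q_k}\|_2^{2}$ (so $t_k\to\infty$ by Theorem~\ref{thm2}(ii)), and $w_k(x)=\e_k v_{q_k}(\e_k x)=\|\nabla v_{q_k}\|_2^{-1}v_{q_k}(x/\|\nabla v_{q_k}\|_2)$, so that $w_k$ is radial and $\|\nabla w_k\|_2=\|w_k\|_2=1$.

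\emph{Step 1: removing the translation.} With $w_k$ as above, Theorem~\ref{thm3} reads (along a subsequence) $w_k(\cdot+x_k)\to\phi:=\frac{\beta}{\|Q\|_2}Q(\beta\,\cdot)$ in $L^{2}(\R^{2})$, with $\beta>0$. The sequence $\{x_k\}$ is bounded: otherwise, along a further subsequence $|x_k|\to\infty$, whence $\|w_k\|_{L^{2}(B_R(x_k))}\to\|\phi\|_{L^{2}(B_R)}>0$ for $R$ large, while for a radial $L^2$-function the mass in a ball $B_R(\xi)$ with $|\xi|\ge 2R$ is at most $(CR/|\xi|)\|w_k\|_2^{2}\to0$ --- a contradiction. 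Hence, along a subsequence, $x_k\to x_0$; continuity of translations in $L^{2}$ then gives $w_k\to\phi(\cdot-x_0)$ in $L^{2}$, and since $w_k$ is radial and the radial subspace of $L^2$ is closed, $\phi(\cdot-x_0)$ is radial, which forces $x_0=0$. Thus $w_k\to\phi$ in $L^{2}(\R^{2})$. Since $\{w_k\}$ is bounded in $H^{1}_{\mathrm{rad}}(\R^{2})$, the compact radial embedding gives $w_k\to\phi$ in $L^{s}(\R^{2})$ for all $s\in(2,\infty)$, and therefore $\int_{\R^{2}}|w_k|^{q_k+2}\to\|\phi\|_{4}^{4}=2\beta^{2}/a^{*}$, where we used $\|Q\|_4^{4}=2\|Q\|_2^{2}$ (the two-dimensional Pohozaev identity for $Q$) and $a^{*}=\|Q\|_2^{2}$.

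\emph{Step 2: identifying $\beta$ and the scale.} Since $v_q$ solves \eqref{eq:0.1a}, testing with $v_q$ gives $\|\nabla v_q\|_2^{2}+\int_{\R^{2}}Vv_q^{2}=\mu_q+a\int_{\R^{2}}|v_q|^{q+2}$, while the Pohozaev identity, with $x\cdot\nabla V=pV$ for $V=|x|^{p}$, gives $\mu_q=\frac{p+2}{2}\int_{\R^{2}}Vv_q^{2}-\frac{2a}{q+2}\int_{\R^{2}}|v_q|^{q+2}$. Eliminating $\mu_q$ yields
\[
\|\nabla v_q\|_2^{2}=\frac{p}{2}\int_{\R^{2}}Vv_q^{2}+\frac{aq}{q+2}\int_{\R^{2}}|v_q|^{q+2},
\]
\[
E_{a,q}(v_q)=\frac{p+2}{4}\int_{\R^{2}}Vv_q^{2}+\frac{a(q-2)}{2(q+2)}\int_{\R^{2}}|v_q|^{q+2}.
\]
By scaling, $\int_{\R^{2}}|v_{q_k}|^{q_k+2}=t_k^{q_k/2}\int_{\R^{2}}|w_k|^{q_k+2}=t_k^{q_k/2}\big(2\beta^{2}/a^{*}+o(1)\big)$. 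Inserting $E_{a,q_k}(v_{q_k})=\frac{q_k-2}{2q_k}\tau_{q_k}^{2}+o(1)$ in the second identity and using $\int_{\R^{2}}Vv_{q_k}^{2}=o\big((q_k-2)\tau_{q_k}^{2}\big)$ (the delicate bound, discussed below) gives $\int_{\R^{2}}|v_{q_k}|^{q_k+2}=\frac{q_k+2}{aq_k}\tau_{q_k}^{2}(1+o(1))$, hence $t_k^{q_k/2}=\frac{a^{*}}{a\beta^{2}}\tau_{q_k}^{2}(1+o(1))$; and since $\int_{\R^{2}}|v_{q_k}|^{q_k+2}\to\infty$, the first identity forces $t_k=\frac{aq_k}{q_k+2}\int_{\R^{2}}|v_{q_k}|^{q_k+2}(1+o(1))=\tau_{q_k}^{2}(1+o(1))$, i.e. $\tau_{q_k}^{-2}\|\nabla v_{q_k}\|_2^{2}\to1$. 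Substituting $t_k\sim\tau_{q_k}^{2}$ back into $t_k^{q_k/2}=\frac{a^{*}}{a\beta^{2}}\tau_{q_k}^{2}(1+o(1))$ gives $\tau_{q_k}^{q_k-2}=\frac{a^{*}}{a\beta^{2}}(1+o(1))$; but $\tau_{q_k}^{q_k-2}=\frac{2a_{q_k}^{*}}{q_k a}\to\frac{a^{*}}{a}$ by \eqref{eq:1.11} (since $a_{q_k}^{*}\to a^{*}$), so $\beta^{2}=1$. Therefore $\phi=\frac{1}{\|Q\|_2}Q$, hence $w_k\to\frac{1}{\|Q\|_2}Q$ in $L^{2}(\R^{2})$, which is \eqref{eq:1.13a}; together with $\tau_{q_k}^{-2}\|\nabla v_{q_k}\|_2^{2}\to1$ this proves the corollary.

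\emph{Main obstacle.} Everything hinges on the bound $\int_{\R^{2}}Vv_{q_k}^{2}=o\big((q_k-2)\tau_{q_k}^{2}\big)$ --- in fact $\int_{\R^{2}}Vv_{q_k}^{2}\to0$ --- since the raw energy identity only yields $\int_{\R^{2}}Vv_{q_k}^{2}=O\big((q_k-2)\tau_{q_k}^{2}\big)$, the borderline case. The gain rests on Step 1: one has $\int_{\R^{2}}Vv_{q_k}^{2}=\e_k^{p}\int_{\R^{2}}|x|^{p}w_k^{2}$, and since the mass of $w_k$ concentrates near $x=0$, the unique minimum of $V=|x|^{p}$ where $V$ vanishes, one expects $\int_{\R^{2}}Vv_{q_k}^{2}\approx\e_k^{p}\int_{\R^{2}}|x|^{p}\phi^{2}\to0$. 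Turning this into a proof requires a uniform-in-$k$ decay estimate for $v_{q_k}$ away from the origin, obtained from \eqref{eq:0.1a} and the coercivity $V(x)\to\infty$; this is exactly the delicate estimate alluded to after Theorem~\ref{thm3} and established in its proof, which we invoke here. For the special potential $V=|x|^{p}$ the radial reduction makes the centring of the blow-up automatic, while its scale and the value $\beta=1$ come out of the sharp energy balance in Step 2.
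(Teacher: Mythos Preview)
Your strategy matches the paper's: exploit radial symmetry, combine the Pohozaev relation $x\cdot\nabla V=pV$ with the energy expansion from Proposition~\ref{lem:3.2}, and deduce $\|\nabla v_{q_k}\|_2^{2}\sim\tau_{q_k}^{2}$ and $\beta=1$. The algebra in your Step~2 is correct once the key bound $\int_{\R^{2}}Vv_{q_k}^{2}=o\big((q_k-2)\tau_{q_k}^{2}\big)$ is in hand.

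The gap is precisely there. You assert that this bound --- equivalently, a uniform-in-$k$ decay estimate for $w_k$ --- is ``established in [Theorem~\ref{thm3}'s] proof''. It is not: the proof of Theorem~\ref{thm3} yields only $\int_{\R^{2}}Vv_{q_k}^{2}=O\big((q_k-2)\tau_{q_k}^{2}\big)$ (see \eqref{thm3-3} and \eqref{thm3-25}), exactly the borderline estimate you yourself flag as insufficient. The sharper bound is the main new content of Corollary~\ref{cor2}, and the paper obtains it as follows: a De~Giorgi--Nash--Moser iteration on the rescaled equation gives a uniform $L^{\infty}$ bound on $w_{q_k}$; radiality then gives the Strauss-type decay $w_{q_k}(x)\le C/|x|$; and since $\e_{q_k}^{2}\mu_{q_k}\to-\beta^{2}<0$ (this is where the $\beta>0$ from Theorem~\ref{thm3} enters), for $|x|$ large the equation yields $-\Delta w_{q_k}\le-\tfrac{1}{3}\beta^{2}w_{q_k}$, and comparison gives the uniform exponential bound $w_{q_k}(x)\le Ce^{-\frac{1}{4}\beta^{2}|x|}$. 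This immediately gives $\int_{\R^{2}}Vv_{q_k}^{2}=\e_{q_k}^{p}\int_{\R^{2}}|x|^{p}w_{q_k}^{2}\to0$, after which your Step~2 goes through. Two minor points: the paper gets radial symmetry of $v_q$ from the Li--Ni symmetry theorem rather than by restricting the variational construction; and once $v_q$ is radial one may take $y_{q_k}=0$ from the outset (by compactness of $H^{1}_{\mathrm{rad}}\hookrightarrow L^{\gamma}$ for $\gamma>2$), so your Step~1 translation-removal is unnecessary.
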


\bigskip
This paper is organized as follows. In section 2, we collect and prove some relevant results for future reference. Then, in section 3, we establish the existence of critical points of $E_{a,q}$. Finally, we analyze the asymptotic behavior of these critical points in sections 4 and 5.

\bigskip

\section{Preliminaries}

\bigskip

In this section, we collect and prove some relevant results for future reference.

For any $q\geq2$, it is well known that problem  \eqref{eq:1.9} possesses a unique radially symmetric positive solution $\varphi_q$. By Lemma 8.1.2 in \cite{CA}, $\varphi_q$ satisfies
\begin{equation}\label{eq:3.3}
\int_{\mathbb{R}^2}|\nabla \varphi_q|^2\,dx=\int_{\mathbb{R}^2}|\varphi_q|^2\,dx=\frac{2}{q+2}\int_{\mathbb{R}^2}|\varphi_q|^{q+2}\,dx.
\end{equation}
It is known from \cite{BL} that there exist positive constants $\delta$, $C$ and $R_0$, independent of $q>0$, such that for any $|x|\geq R_0$,
\begin{equation}\label{eq:3.4}
|\varphi_q(x)|+|\nabla \varphi_q(x)|\leq Ce^{-\delta |x|}.
\end{equation}

\bigskip

\begin{Lemma}\label{lem:5.1} Let $\varphi_q\geq 0$ be the unique solution of \eqref{eq:1.9} with $2\leq q\leq 3$. Then, $\varphi_q\rightarrow Q$ strongly in $H^1(\mathbb{R}^2)$ as $q\to2_+$ and there exist positive constants $C$ and $\delta$ independent of $q$ such that
\begin{equation}\label{eq:5.1}
\varphi_q(x)\leq Ce^{-\delta |x|}\ \ {\rm for} \ \ x\in\mathbb{R}^2.
\end{equation}
Moreover, $a_q^*\to a^* = \|Q\|_2^2$ as $q\to2_+$, where
\begin{equation}\label{eq:1.6a}
a_q^*=\|\varphi_q\|_2^q.
\end{equation}
\end{Lemma}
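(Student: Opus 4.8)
The plan is a compactness argument on the compact parameter interval $q\in[2,3]$, exploiting that $\varphi_2=Q$ (equation \eqref{eq:1.9} at $q=2$ is exactly \eqref{eq:1.7}) together with the uniqueness result of \cite{K}. The crucial first step is to establish uniform bounds $0<c\le\|\varphi_q\|_{H^1(\mathbb{R}^2)}\le C$ for $q\in[2,3]$. For the lower bound, testing \eqref{eq:1.9} with $\varphi_q$ gives $\|\nabla\varphi_q\|_2^2+\tfrac2q\|\varphi_q\|_2^2=\tfrac2q\|\varphi_q\|_{q+2}^{q+2}$; bounding the right-hand side by the Sobolev inequality $\|\varphi_q\|_{q+2}^{q+2}\le S_q\|\varphi_q\|_{H^1}^{q+2}$, whose constant $S_q$ is uniform since the exponent $q+2$ stays in the compact range $[4,5]$, and noting that the left-hand side dominates $\tfrac23\|\varphi_q\|_{H^1}^2$ on $[2,3]$, yields $\|\varphi_q\|_{H^1}^q\ge c_0>0$. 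For the upper bound I would use that $\varphi_q$ is the ground state of \eqref{eq:1.9} (its unique positive solution coincides with the least-energy one), so that for the associated energy $J_q(u)=\tfrac12\|\nabla u\|_2^2+\tfrac1q\|u\|_2^2-\tfrac{2}{q(q+2)}\|u\|_{q+2}^{q+2}$ one has $J_q(\varphi_q)=\inf_{u\ne0}\max_{s>0}J_q(su)\le\max_{s>0}J_q(sQ)$, the latter being an explicit function of $\|\nabla Q\|_2,\|Q\|_2,\|Q\|_{q+2}$, hence bounded on $[2,3]$; since \eqref{eq:3.3} forces $J_q(\varphi_q)=\tfrac12\|\varphi_q\|_2^2=\tfrac14\|\varphi_q\|_{H^1}^2$, the upper bound follows. (Alternatively one may rescale \eqref{eq:1.9} to the $q$-independent equation $-\Delta w+w=w^{q+1}$ and invoke continuity in $q$ of its ground state.)

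Next I would bootstrap to a uniform $L^\infty$ bound: writing $-\Delta\varphi_q=\tfrac2q(\varphi_q^{q+1}-\varphi_q)$, the right-hand side is bounded in $L^2(\mathbb{R}^2)$ uniformly in $q\in[2,3]$ (by the $H^1$ bound and Sobolev embeddings), so $\varphi_q$ is bounded in $H^2(\mathbb{R}^2)\hookrightarrow L^\infty(\mathbb{R}^2)$ uniformly. This already yields \eqref{eq:5.1}: for $|x|\ge R_0$ it is \eqref{eq:3.4}, while for $|x|\le R_0$ one has $\varphi_q(x)\le\|\varphi_q\|_\infty\le\big(\|\varphi_q\|_\infty e^{\delta R_0}\big)e^{-\delta|x|}$, all constants being independent of $q$.

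For the convergence, fix any sequence $q_k\to2_+$; by the uniform $H^1$ bound a subsequence has $\varphi_{q_k}\rightharpoonup\varphi_*$ weakly in $H^1(\mathbb{R}^2)$, and the uniform decay \eqref{eq:3.4} makes the tails uniformly small, so with local compactness $\varphi_{q_k}\to\varphi_*$ strongly in $L^p(\mathbb{R}^2)$ for every $p\in[2,\infty)$; using the uniform $L^\infty$ bound and splitting off $\int(Q^{q_k+2}-Q^4)$ one also gets $\|\varphi_{q_k}\|_{q_k+2}^{q_k+2}\to\|\varphi_*\|_4^4$. Passing to the limit in the weak form of \eqref{eq:1.9} shows that $\varphi_*\ge0$ is a radial weak solution of \eqref{eq:1.7}; it is nontrivial because $\|\varphi_*\|_2^2=\lim\|\varphi_{q_k}\|_2^2\ge c>0$, hence $\varphi_*>0$ by the strong maximum principle, and $\varphi_*=Q$ by the uniqueness of \cite{K}. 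As the limit is independent of the sequence, $\varphi_q\to Q$ in $L^2(\mathbb{R}^2)$ and weakly in $H^1(\mathbb{R}^2)$ as $q\to2_+$. Finally \eqref{eq:3.3} gives $\|\nabla\varphi_q\|_2^2=\|\varphi_q\|_2^2=\tfrac{2}{q+2}\|\varphi_q\|_{q+2}^{q+2}\to\tfrac12\|Q\|_4^4=\|Q\|_2^2=\|\nabla Q\|_2^2$, so $\|\varphi_q\|_{H^1}\to\|Q\|_{H^1}$, which together with weak convergence upgrades to strong $H^1$ convergence; and $a_q^*=\|\varphi_q\|_2^q\to\|Q\|_2^2=a^*$ since $\|\varphi_q\|_2\to\|Q\|_2$ stays bounded away from $0$ and $\infty$ while $q\to2$.

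The main obstacle is the uniform upper bound on $\|\varphi_q\|_{H^1}$: the identities \eqref{eq:3.3} relate the various norms of $\varphi_q$ but do not by themselves control their common value, so one genuinely needs the variational (least-energy) characterization --- or the rescaling to a fixed equation --- to bound the energy and hence $\|\varphi_q\|_{H^1}$. The only other delicate point is handling the moving exponent $q_k+2$ in the nonlinear term when passing to the limit, which becomes routine once the uniform $L^\infty$ and exponential-decay bounds are in place.
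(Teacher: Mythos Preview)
Your argument is correct and arrives at the same conclusion by essentially the same skeleton (uniform $H^1$ bounds $\Rightarrow$ compactness $\Rightarrow$ identify the limit via uniqueness $\Rightarrow$ upgrade to strong $H^1$ via the Pohozaev identities \eqref{eq:3.3}), but two steps are carried out differently from the paper.

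\emph{Upper $H^1$ bound.} The paper exploits that $\varphi_q$ is the Gagliardo--Nirenberg optimizer (Weinstein's characterization): from
\[
\|\varphi_q\|_2^q \le \frac{q+2}{2}\,\frac{\big(\int|\nabla u|^2\big)^{q/2}\int|u|^2}{\int|u|^{q+2}}\qquad\text{for all }u\in H^1(\mathbb{R}^2),
\]
one simply inserts a fixed test function $\psi\in C_c^\infty(\mathbb{R}^2)$ to bound $\|\varphi_q\|_2$ uniformly on $q\in[2,3]$, and then \eqref{eq:3.3} gives the full $H^1$ bound. Your route through the mountain-pass energy $J_q(\varphi_q)=\inf_{u\ne0}\max_{s>0}J_q(su)\le\max_{s>0}J_q(sQ)$ is equally valid but presupposes the (standard) identification of the unique positive solution with the least-energy solution; the paper's trick only needs the optimizer property, which is exactly the definition behind $a_q^*$.

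\emph{Compactness.} The paper uses the compact embedding $H^1_{\mathrm{rad}}(\mathbb{R}^2)\hookrightarrow L^p(\mathbb{R}^2)$ for $p>2$ and thus obtains strong $L^p$ convergence only for $p>2$; you instead use the uniform exponential tail \eqref{eq:3.4} to get strong convergence in every $L^p$, $p\ge2$, which has the advantage of making the nontriviality of the limit ($\|\varphi_*\|_2^2=\lim\|\varphi_{q_k}\|_2^2\ge c>0$) explicit---a point the paper leaves implicit.

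Everything else (passing the moving exponent $q_k+2$ to the limit, the $L^\infty$ bootstrap, the decay \eqref{eq:5.1}, and the final $a_q^*\to a^*$) matches the paper's argument.
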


\begin{proof}
By the Gagliardo-Nirenberg inequality(see \cite{W}), we have
\[
\int_{\mathbb{R}^2}|u|^{q+2}\,dx\leq \frac{q+2}{2\|\varphi_q\|_2^q}\Big(\int_{\mathbb{R}^2}|\nabla u|^2\,dx\Big)^{\frac{q}{2}}\int_{\mathbb{R}^2}|u|^2\,dx
\]
for any $u\in H^1(\mathbb{R}^2)$, that is,
\[
\|\varphi_q\|_2\leq \Big(\frac{q+2}{2}\Big)^{\frac{1}{q}}\frac{\Big(\int_{\mathbb{R}^2}|\nabla u|^2\,dx\Big)^{\frac{1}{2}}\Big(\int_{\mathbb{R}^2}|u|^2\,dx\Big)^{\frac{1}{q}}}{\Big(\int_{\mathbb{R}^2}|u|^{q+2}\,dx
\Big)^{\frac{1}{q}}}.
\]
Choosing $\psi\in C_c^\infty(\mathbb{R}^2)$ such that $0\leq \psi\leq 1$ and $\psi\neq0$, we obtain the uniform $L^2$ bound of $\varphi_q$ in $q$:
\begin{equation}\label{eq:5.2}
\|\varphi_q\|_2
\leq \bigg(\frac{5}{2}\bigg)^{\frac{1}{2}}\frac{\Big(\int_{\mathbb{R}^2}|\nabla \psi|^2\,dx+1\Big)^{\frac{1}{2}}\Big(\int_{\mathbb{R}^2}|\psi|^2\,dx+1\Big)^{\frac{1}{2}}}
{\min\{\Big(\int_{\mathbb{R}^2}|\psi|^5\,dx\Big)^{\frac{1}{2}},
\Big(\int_{\mathbb{R}^2}|\psi|^5\,dx\Big)^{\frac{1}{3}}\}}.
\end{equation}
Therefore, equation \eqref{eq:3.3} implies that $\varphi_q$ is uniformly bounded in $H^1(\mathbb{R}^2)$ for $2\leq q\leq 3$.
Since $\varphi_q\in H_{rad}^1(\mathbb{R}^2)$ and $H_{rad}^1(\mathbb{R}^2)\hookrightarrow L^p(\mathbb{R}^2)$ is compact for $p>2$, there exists $\varphi\in H_{rad}^1(\mathbb{R}^2)$ such that $\varphi_q\rightharpoonup \varphi$ weakly in $H_{rad}^1(\mathbb{R}^2)$ and $\varphi_q\rightarrow \varphi$ strongly in $L^p(\mathbb{R}^2)$ for $p>2$. Thus, we derive for $q\to 2_+$ that
\begin{equation}\label{eq:5.3}
\begin{split}
 &\ \ \int_{\mathbb{R}^2}|\varphi_q^{q+2}-\varphi^4|\,dx\\
 &\leq\int_{\mathbb{R}^2}|\varphi_q^{q+2}-\varphi^{q+2}|\,dx+\int_{\mathbb{R}^2}|\varphi^{q+2}-\varphi^4|\,dx\\
 &\leq C\Big(\int_{\mathbb{R}^2}\Big(|\varphi_q|^{q+1}+|\varphi|^{q+1}\Big)|\varphi_q-\varphi|\,dx
 +\int_{\mathbb{R}^2}|\varphi^{q+2}-\varphi^4|\,dx\Big)\\
 &\leq C\Big[\Big(\int_{\mathbb{R}^2}|\varphi_q|^{\frac{4(q+1)}{3}}+|\varphi|^{\frac{4(q+1)}{3}}\,dx\Big)^{\frac{3}{4}}
 \Big(\int_{\mathbb{R}^2}|\varphi_q-\varphi|^4\,dx\Big)^{\frac{1}{4}}
 +\int_{\mathbb{R}^2}|\varphi^{q+2}-\varphi^4|\,dx\Big]\\
 &\to0.
 \end{split}
\end{equation}
Observe that $\varphi$ satisfies \eqref{eq:1.7}. By the Pohozaev identity \eqref{eq:3.3}, we have
\begin{equation}\label{eq:3.3b}
\int_{\mathbb{R}^2}|\nabla \varphi|^2\,dx=\int_{\mathbb{R}^2}|\varphi|^2\,dx=\frac{1}{2}\int_{\mathbb{R}^2}|\varphi|^4\,dx.
\end{equation}
Hence, we deduce from \eqref{eq:3.3}, \eqref{eq:5.3} and \eqref{eq:3.3b} that
 \[
 \int_{\mathbb{R}^2}|\nabla \varphi_q|^2\,dx = \frac{2}{q+2}\int_{\mathbb{R}^2}|\varphi_q|^{q+2}\,dx \to  \frac{1}{2}\int_{\mathbb{R}^2}|\varphi|^4\,dx=\int_{\mathbb{R}^2}|\nabla \varphi|^2\,dx,
 \]
 which implies that $\varphi_q\rightarrow \varphi$ in $H^1(\mathbb{R}^2)$. By the uniqueness of positive solutions to \eqref{eq:1.7}, we have $\varphi=Q$. Applying the standard elliptic theory, we may show that $\varphi_q$ is uniformly bounded in $L^\infty(\mathbb{R}^2)$. So by \eqref{eq:3.4}, there exists $C, \delta>0$ independent of $q$ such that
\[
\varphi_q(x)\leq Ce^{-\delta |x|}
\]
for $x\in \mathbb{R}^2$.
\end{proof}

\bigskip

In order to find the critical points of the constrained problem, it needs, among other things, to find a $(PS)$ sequence on the constrained manifold. To bound the $(PS)$ sequence, we need the following variant mountain pass theorem.

\bigskip

Let $\mathcal{E}_{a,q}:\mathcal{H}\times \mathbb{R}\to \mathbb{R}$ be the functional
\begin{equation}
\mathcal{E}_{a,q}(u,s)=E_{a,q}(H(u,s))=\frac{1}{2}e^{2s}\int_{\mathbb{R}^2}[|\nabla u|^2+V(e^{-s}x)|u|^2]\,dx
-\frac{a}{q+2}e^{sq}\int_{\mathbb{R}^2}|u|^{q+2}\,dx,
\end{equation}
where $H(u,s)=e^su(e^sx)$.
For $\varphi_1, \varphi_2\in S(1)$ being nonnegative, we  define
\begin{equation}\label{eq:5.4}
\mathcal{P}_q=\{\gamma\in C([0,1], S(1)\times \mathbb{R}):\gamma(0)=(\varphi_1,0),\
\gamma(1)=(\varphi_2,0)\}
\end{equation}
and
\begin{equation}\label{eq:5.5}
b_q=\inf_{\gamma\in \mathcal{P}_q}\max_{t\in [0,1]}\mathcal{E}_{a,q}(\gamma(t)).
\end{equation}

\bigskip

Let $c_q$ be the mountain pass level for $E_{a,q}$ defined by
\begin{equation}\label{eq:2.18a}
 c_q=\inf_{g\in \Gamma_q}\max_{t\in [0,1]} E_{a,q}(g(t)),
\end{equation}
where
\begin{equation}\label{eq:2.17a}
\Gamma_q=\{g\in C([0,1],S(1))|g(0)=\varphi_1,\ g(1)=\varphi_2\}.
\end{equation}

\bigskip
Since $H(\mathcal{P}_q)\subset\Gamma_q$ and $(g,0)\in \mathcal{P}_q$ for any $g\in \Gamma_q$, we have the following result.
\begin{Lemma}\label{lem:5.2}  There holds  $b_q=c_q$.
\end{Lemma}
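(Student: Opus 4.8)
The plan is to transfer paths between $S(1)\times\mathbb{R}$ and $S(1)$ by means of the dilation $H(u,s)(x)=e^{s}u(e^{s}x)$ and the defining identity $\mathcal{E}_{a,q}(u,s)=E_{a,q}(H(u,s))$, and then to read off the equality of the two min-max values from the two inclusions recorded just above the statement. I would first note the elementary properties of $H$ that make this work: a change of variables gives $\|H(u,s)\|_{L^2(\mathbb{R}^2)}=\|u\|_{L^2(\mathbb{R}^2)}$, so $H$ maps $S(1)\times\mathbb{R}$ into $S(1)$; the map $(u,s)\mapsto H(u,s)$ is continuous from $\mathcal{H}\times\mathbb{R}$ into $\mathcal{H}$ (here $(V_1)$ is used to ensure $\int_{\mathbb{R}^2}V(e^{-s}x)|u|^2\,dx<\infty$ and that this quantity varies continuously with $(u,s)$); and $H(\cdot,0)$ is the identity on $\mathcal{H}$.

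For the inequality $b_q\ge c_q$, I would take an arbitrary $\gamma\in\mathcal{P}_q$ and set $g:=H\circ\gamma$. By the properties above $g\in C([0,1],S(1))$, and since $\gamma(0)=(\varphi_1,0)$, $\gamma(1)=(\varphi_2,0)$ while $H(\cdot,0)$ is the identity, we get $g(0)=\varphi_1$ and $g(1)=\varphi_2$, i.e. $g\in\Gamma_q$; this is exactly the inclusion $H(\mathcal{P}_q)\subset\Gamma_q$. Then $\mathcal{E}_{a,q}(\gamma(t))=E_{a,q}(g(t))$ for every $t\in[0,1]$, so $\max_{t\in[0,1]}\mathcal{E}_{a,q}(\gamma(t))=\max_{t\in[0,1]}E_{a,q}(g(t))\ge c_q$; taking the infimum over $\gamma\in\mathcal{P}_q$ yields $b_q\ge c_q$.

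For the reverse inequality, I would take an arbitrary $g\in\Gamma_q$ and lift it to $\gamma(t):=(g(t),0)$, which lies in $\mathcal{P}_q$ by the second inclusion $(g,0)\in\mathcal{P}_q$. Since $H(g(t),0)=g(t)$, we have $\mathcal{E}_{a,q}(\gamma(t))=E_{a,q}(g(t))$ for all $t$, hence $\max_{t\in[0,1]}\mathcal{E}_{a,q}(\gamma(t))=\max_{t\in[0,1]}E_{a,q}(g(t))$, and taking the infimum over $g\in\Gamma_q$ gives $b_q\le c_q$. Combining the two bounds gives $b_q=c_q$. The argument is purely formal; the only step needing a line of justification is the well-definedness and continuity of $H:S(1)\times\mathbb{R}\to S(1)$, so I do not anticipate any genuine obstacle.
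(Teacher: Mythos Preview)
Your proof is correct and follows exactly the approach the paper indicates in the sentence preceding the lemma: the inclusion $H(\mathcal{P}_q)\subset\Gamma_q$ yields $b_q\ge c_q$, and the lifting $g\mapsto(g,0)\in\mathcal{P}_q$ yields $b_q\le c_q$. You have simply filled in the details the paper leaves implicit.
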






\bigskip

Let us denote by $Y$ the space $\mathcal{H}\times \mathbb{R}$ with the norm $\|\cdot\|_Y^2=\|\cdot\|_{\mathcal{H}}^2+\|\cdot\|_\mathbb{R}^2$ and denote by $Y^{-1}$ its dual
space.
\begin{Proposition}\label{prop:2.1aa}
Suppose that
\begin{equation}\label{mpt0}
b_q=\inf_{\gamma\in \mathcal{P}_q}\max_{t\in [0,1]}\mathcal{E}_{a,q}(\gamma(t))>\sup\{\mathcal{E}_{a,q}(\gamma(0)),\ \mathcal{E}_{a,q}(\gamma(1))\}.
\end{equation}
Then, there exist $\{\gamma_n\}:=\{(g_n,0)\}\subset \mathcal{P}_q$ with $g_n\geq 0$ and $\{(w_n,s_n)\}\subset S(1)\times \mathbb{R}$ such that
\begin{equation}\label{eq:5.6}
\lim_{n\rightarrow \infty}\sup_{\gamma_n}\mathcal{E}_{a,q}=  b_q;
\end{equation}
\begin{equation}\label{eq:5.7}
\lim_{n\rightarrow \infty} \mathcal{E}_{a,q}(w_n, s_n)=b_q;
\end{equation}
\begin{equation}\label{eq:5.8}
\|\mathcal{E}_{a,q}'|_{S(1)\times \mathbb{R}}(w_n,s_n)\|_{Y^{-1}}\rightarrow 0;
\end{equation}
\begin{equation}\label{eq:5.9}
\lim_{n\rightarrow \infty} dist\big((w_n,s_n), (g_n,0)\big)=0.
\end{equation}
\end{Proposition}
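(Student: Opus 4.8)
The plan is to produce $(w_n,s_n)$ by combining Ekeland's variational principle on the path space $\mathcal{P}_q$ with a quantitative deformation argument on the constraint manifold $S(1)\times\mathbb{R}$, in the spirit of \cite{GH,JE}. I would first record two preliminary facts. (a) $\mathcal{E}_{a,q}$ is of class $C^1$ on $Y$: the term $e^{sq}\int_{\mathbb{R}^2}|u|^{q+2}$ is smooth because $\mathcal{H}\hookrightarrow H^1(\mathbb{R}^2)\hookrightarrow L^{q+2}(\mathbb{R}^2)$, while for the scaled potential condition $(V_2)$ gives, through Gronwall's inequality, $V(e^{-s}x)\le e^{C_2|s|}(V(x)+1)$, so that $(u,s)\mapsto e^{2s}\int_{\mathbb{R}^2}V(e^{-s}x)|u|^2$ is finite and $C^1$ on $\mathcal{H}\times\mathbb{R}$; consequently $S(1)\times\mathbb{R}$ is a $C^1$ submanifold of $Y$ of codimension one and $\mathcal{E}_{a,q}'|_{S(1)\times\mathbb{R}}(u,s)\in Y^{-1}$ — the tangential part of $\mathcal{E}_{a,q}'(u,s)$, i.e.\ $\mathcal{E}_{a,q}'(u,s)$ minus a Lagrange-multiplier normal term — is well defined. (b) Since $E_{a,q}(|u|)\le E_{a,q}(u)$, the map $u\mapsto|u|$ is continuous on $H^1(\mathbb{R}^2)$, $\varphi_1,\varphi_2\ge 0$, and $b_q=c_q$ by Lemma \ref{lem:5.2}, I can fix \emph{nonnegative} paths $g_n\in\Gamma_q$ with $\max_t E_{a,q}(g_n(t))\le b_q+1/n^2$; putting $\gamma_n:=(g_n,0)\in\mathcal{P}_q$ then gives $b_q\le\Psi(\gamma_n)\le b_q+1/n^2$, where $\Psi(\gamma):=\max_{t\in[0,1]}\mathcal{E}_{a,q}(\gamma(t))$, which is \eqref{eq:5.6}.

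Next I would equip $\mathcal{P}_q$ with the uniform metric $d(\gamma,\tilde\gamma)=\max_{t}\|\gamma(t)-\tilde\gamma(t)\|_Y$, making $(\mathcal{P}_q,d)$ a complete metric space on which $\Psi$ is lower semicontinuous with $\inf_{\mathcal{P}_q}\Psi=b_q$. Applying Ekeland's variational principle to $\Psi$ at $\gamma_n$ with $\varepsilon=1/n^2$ and $\lambda=1/n$ yields $\eta_n\in\mathcal{P}_q$ with $\Psi(\eta_n)\le\Psi(\gamma_n)$, $d(\eta_n,\gamma_n)\le 1/n$, and
\[
\Psi(\eta)\ \ge\ \Psi(\eta_n)-\tfrac1n\,d(\eta,\eta_n)\qquad\text{for all }\eta\in\mathcal{P}_q.
\]

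The core step is then to show that along $\eta_n$ there is $t_n\in[0,1]$ with $\mathcal{E}_{a,q}(\eta_n(t_n))\to b_q$ and $\|\mathcal{E}_{a,q}'|_{S(1)\times\mathbb{R}}(\eta_n(t_n))\|_{Y^{-1}}\to 0$; setting $(w_n,s_n):=\eta_n(t_n)$ then gives \eqref{eq:5.7} and \eqref{eq:5.8}, while \eqref{eq:5.9} follows from
\[
\mathrm{dist}\big((w_n,s_n),(g_n,0)\big)\ \le\ \|\eta_n(t_n)-\gamma_n(t_n)\|_Y\ \le\ d(\eta_n,\gamma_n)\ \le\ \tfrac1n.
\]
I would prove this by contradiction: if it fails, then after passing to a subsequence there is $\delta>0$ — which may be taken below $b_q-\sup\{E_{a,q}(\varphi_1),E_{a,q}(\varphi_2)\}$, positive by \eqref{mpt0} — such that $\|\mathcal{E}_{a,q}'|_{S(1)\times\mathbb{R}}(\eta_n(t))\|_{Y^{-1}}\ge\delta$ whenever $\mathcal{E}_{a,q}(\eta_n(t))\ge b_q-\delta$, for all large $n$. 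From a cut-off pseudo-gradient vector field for $\mathcal{E}_{a,q}$ on $S(1)\times\mathbb{R}$, supported near $\eta_n([0,1])$, one then builds a flow $\Phi^{\tau}$ that keeps the endpoints fixed (the field vanishes near them since the endpoint values lie below $b_q-\delta$), does not increase $\mathcal{E}_{a,q}$, displaces points by at most $\tau$, and decreases $\mathcal{E}_{a,q}$ at rate $\ge\delta/2$ on $\{\mathcal{E}_{a,q}\ge b_q-\delta\}$; for a suitably small $\tau=\tau_n>0$ this gives $\tilde\eta_n:=\Phi^{\tau_n}\circ\eta_n\in\mathcal{P}_q$ with $\Psi(\tilde\eta_n)\le\Psi(\eta_n)-\tfrac{\delta}{2}\tau_n$. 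Combined with the Ekeland inequality $\Psi(\tilde\eta_n)\ge\Psi(\eta_n)-\tfrac1n d(\tilde\eta_n,\eta_n)\ge\Psi(\eta_n)-\tfrac{\tau_n}{n}$, this forces $\delta/2\le 1/n$, a contradiction for $n$ large.

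The hard part is the deformation step: one must construct the pseudo-gradient flow so that it stays on the nonlinear manifold $S(1)\times\mathbb{R}\subset Y$, is globally defined (Cauchy–Lipschitz, using the cut-off to a neighbourhood of the compact path image), and provably lowers the maximum of $\mathcal{E}_{a,q}$ along the path by an amount proportional to the flow time — while leaving the endpoints unmoved, which is exactly where the strict inequality in \eqref{mpt0} is used. A secondary but genuinely needed point is the verification that $\mathcal{E}_{a,q}\in C^1(Y)$ in spite of the scaled potential $V(e^{-s}x)$; this is where $(V_2)$ enters essentially.
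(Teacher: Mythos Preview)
Your proposal is correct and follows the same route as the paper. Both arguments first manufacture nonnegative minimizing paths $\gamma_n=(g_n,0)$ by taking $g_n=|h_n|$ for some $h_n\in\Gamma_q$ with $\max E_{a,q}(h_n)\to c_q=b_q$ (Lemma~\ref{lem:5.2}) and using $E_{a,q}(|u|)\le E_{a,q}(u)$. The only difference is packaging: the paper then invokes Theorem~3.2 of Ghoussoub~\cite{GH} as a black box on $X=S(1)\times\mathbb{R}$ with $\mathcal{F}=\mathcal{P}_q$ and the prescribed minimizing sequence $\gamma_n$, which delivers \eqref{eq:5.7}--\eqref{eq:5.9} directly, whereas you reconstruct the proof of that abstract result via Ekeland's principle on $(\mathcal{P}_q,d)$ followed by a quantitative pseudo-gradient deformation on the constraint manifold. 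Your explicit verification that $\mathcal{E}_{a,q}\in C^1(Y)$ through the Gronwall bound $V(e^{-s}x)\le e^{C_2|s|}(V(x)+1)$ coming from $(V_2)$ is a point the paper leaves implicit when citing \cite{GH}; it is indeed needed, and your argument for it is sound.
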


\begin{proof}  By Lemma 2.3, there exists $h_n\in\Gamma_q$ such that $\lim_{n\to\infty}\sup_{h_n}E_{a,q}(h_n)=c_q=b_q$. Obviously, $|h_n|\in \Gamma_q$ and $c_q\leq E_{a,q}(|h_n|)\leq E_{a,q}(h_n)$. Hence, $\lim_{n\to\infty}\sup_{h_n}E_{a,q}(|h_n|)=b_q$ and
$\lim_{n\to \infty}\sup_{(|h_n|,0)}\mathcal{E}_{a,q}=b_q$.
The conclusion follows from Theorem 3.2 in \cite{GH}, where we choose $\varphi=\mathcal{E}_{a,q}$, $\mathcal{F}=\mathcal{P}_q$, $B=(\gamma(0), \gamma(1))$, $c=b_q$, $X=S(1)\times \mathbb{R}$,  $\gamma_n=(g_n,0)$ with $g_n=|h_n|$.
\end{proof}

\bigskip

Finally, we have the following Pohozaev identity.
\begin{Lemma}\label{lem:5.3} If $u$ solves
\begin{equation}\label{eq:5.12a}
-\Delta u+V(x)u=au^{q+1}+\mu u,
\end{equation}
 then
\begin{equation}\label{eq:5.12}
\begin{split}
\int_{\mathbb{R}^2}|\nabla u|^2\,dx-\frac{1}{2}\int_{\mathbb{R}^2}x\cdot\nabla V|u|^2\,dx=\frac{qa}{q+2}\int_{\mathbb{R}^2}|u|^{q+2}\,dx.
\end{split}
\end{equation}
\end{Lemma}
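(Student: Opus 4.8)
The plan is to derive \eqref{eq:5.12} by combining two integral identities satisfied by any sufficiently regular solution $u$ of \eqref{eq:5.12a}: the Nehari-type identity obtained by testing against $u$, and the Pohozaev-type identity obtained by testing against the generator of dilations $x\cdot\nabla u$. I will take $u\in\mathcal{H}\cap H^1(\mathbb{R}^2)$ (the class in which our critical points live); since the right-hand side of \eqref{eq:5.12a} is then locally in $L^p$, elliptic regularity gives $u\in W^{2,p}_{\mathrm{loc}}\cap C^1$, which is enough to perform the integrations by parts on balls $B_R$.

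First I would record the Nehari identity: multiplying \eqref{eq:5.12a} by $u$ and integrating over $\mathbb{R}^2$ gives $\int_{\mathbb{R}^2}|\nabla u|^2+\int_{\mathbb{R}^2}V|u|^2=a\int_{\mathbb{R}^2}|u|^{q+2}+\mu\int_{\mathbb{R}^2}|u|^2$ (all integrals finite by the definition of $\mathcal{H}$, and the boundary term $\int_{\partial B_R}u\,\partial_\nu u$ vanishes along a sequence $R\to\infty$ by the averaging argument used below). Next, for $R>0$ I would multiply \eqref{eq:5.12a} by $x\cdot\nabla u$ and integrate over $B_R$. The point is that in dimension two the Laplacian term contributes nothing in the interior: $\int_{B_R}(-\Delta u)(x\cdot\nabla u)=\tfrac{2-2}{2}\int_{B_R}|\nabla u|^2+(\text{boundary})=(\text{boundary})$. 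Using $Vu\,(x\cdot\nabla u)=\tfrac12\operatorname{div}(xV|u|^2)-\tfrac12(x\cdot\nabla V)|u|^2-V|u|^2$, $au^{q+1}(x\cdot\nabla u)=\tfrac{a}{q+2}\operatorname{div}(x|u|^{q+2})-\tfrac{2a}{q+2}|u|^{q+2}$ and $\mu u\,(x\cdot\nabla u)=\tfrac{\mu}{2}\operatorname{div}(x|u|^2)-\mu|u|^2$, the divergence theorem turns the tested equation into
\[
-\tfrac12\int_{B_R}(x\cdot\nabla V)|u|^2-\int_{B_R}V|u|^2=-\tfrac{2a}{q+2}\int_{B_R}|u|^{q+2}-\mu\int_{B_R}|u|^2+\mathcal{R}(R),
\]
where $\mathcal{R}(R)$ collects the surface integrals on $\partial B_R$ and is bounded by $CR\int_{\partial B_R}\bigl(|\nabla u|^2+|u|^2+|u|^{q+2}+(V+1)|u|^2\bigr)$ after using $(V_2)$, namely $|x\cdot\nabla V|\le C_2(V+1)$, to control $\int_{\partial B_R}|x\cdot\nabla V|\,|u|^2$.

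The main (and essentially only) obstacle is to send $R\to\infty$, i.e.\ to kill $\mathcal{R}(R)$ along a suitable sequence. Since $\phi:=|\nabla u|^2+|u|^2+|u|^{q+2}+(V+1)|u|^2\in L^1(\mathbb{R}^2)$, we have $\int_1^\infty\bigl(\int_{\partial B_\rho}\phi\bigr)\,d\rho<\infty$, hence $\liminf_{\rho\to\infty}\rho\int_{\partial B_\rho}\phi=0$, so there is $R_n\to\infty$ with $\mathcal{R}(R_n)\to0$; passing to the limit along $\{R_n\}$ produces the global Pohozaev relation. Finally I would simply add the two global identities: the terms $\int_{\mathbb{R}^2}V|u|^2$ and $\mu\int_{\mathbb{R}^2}|u|^2$ cancel, and since $a-\tfrac{2a}{q+2}=\tfrac{qa}{q+2}$, what remains is exactly \eqref{eq:5.12}. (In every application in the paper $u$ is a positive solution, so by standard elliptic estimates and $(V_1)$ it decays exponentially and $\mathcal{R}(R)\to0$ directly; the averaging argument merely shows that no a priori decay is needed.)
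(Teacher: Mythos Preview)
Your proof is correct and follows essentially the same strategy as the paper: combine the Nehari identity (obtained by testing with $u$) with the Pohozaev identity (obtained by testing with $x\cdot\nabla u$), so that the $\int V|u|^2$ and $\mu\int|u|^2$ terms cancel and the residual nonlinear coefficient is $a-\tfrac{2a}{q+2}=\tfrac{qa}{q+2}$. The only difference is that the paper quotes the Pohozaev relation \eqref{eq:5.13} from \cite{AP} rather than deriving it; your direct derivation with the $\liminf$ averaging argument on $\partial B_R$ is fine (one small remark: no boundary term involving $x\cdot\nabla V$ actually appears---$(V_2)$ is needed only to ensure $\int_{\mathbb{R}^2}|x\cdot\nabla V|\,|u|^2<\infty$ so that the volume integral makes sense).
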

\begin{proof}
It is known from \cite{AP} that
\begin{equation}\label{eq:5.13}
\begin{split}
\int_{\mathbb{R}^2}V|u|^2\,dx+\frac{1}{2}\int_{\mathbb{R}^2}x\cdot\nabla V|u|^2\,dx
=\frac{2a}{q+2}\int_{\mathbb{R}^2}|u|^{q+2}\,dx+\mu\int_{\mathbb{R}^2}|u|^{2}\,dx.
\end{split}
\end{equation}
Multiplying \eqref{eq:5.12a} by $u$ and integrating by part, we have
\begin{equation}\label{eq:5.14}
\begin{split}
\int_{\mathbb{R}^2}|\nabla u|^2\,dx+\int_{\mathbb{R}^2}V|u|^2\,dx
=a\int_{\mathbb{R}^2}|u|^{q+2}\,dx+\mu\int_{\mathbb{R}^2}|u|^{2}\,dx.
\end{split}
\end{equation}
The Pohozaev identity \eqref{eq:5.12} follows from \eqref{eq:5.13} and \eqref{eq:5.14}.
\end{proof}

\bigskip

\section{Existence}

\bigskip

In this section, we show the existence of two critical points of the functional $E_{a,q}(u)$ on the sphere $S(1)$ defined in \eqref{eq:1.6c}.
The first critical point of $E_{a,q}(u)$ will be found as
a minimizer of the minimization problem
\begin{equation}\label{eq:2.2}
m_k=\inf_{u\in A_k}E_{a,q}(u),
\end{equation}
where
\[
A_k=\{u\in S(1)|\int_{\mathbb{R}^2}|\nabla u|^2\,dx< k\}.
\]
Once $m_k$ is achieved, it is necessary to show that the minimizer is not on the boundary of $A_k$:
\[
\partial A_k=\{u\in S(1)|\int_{\mathbb{R}^2}|\nabla u|^2\,dx=k\}.
\]
The minimizer is then a critical point of $E_{a,q}(u)$. The second critical point of $E_{a,q}(u)$ is obtained by the mountain pass theorem.

At the beginning, we recall the following compactness lemma, which can be proved as that in \cite{YY}.

\begin{Lemma}\label{lem:2.1} Suppose $V\in L_{loc}^\infty(\mathbb{R}^2)$ and  $\lim_{x\to\infty}V(x)=\infty$. Then the embedding $\mathcal{H}\hookrightarrow L^p(\mathbb{R}^2)$ is compact for any $ p\in[2,\infty)$.
\end{Lemma}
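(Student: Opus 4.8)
The statement asserts that every bounded sequence in $\mathcal{H}$ admits a subsequence converging strongly in $L^p(\mathbb{R}^2)$ for each finite $p\geq 2$, so I would start from an arbitrary sequence $(u_n)\subset\mathcal{H}$ with $\sup_n\|u_n\|_{\mathcal H}<\infty$; in particular $\sup_n\|u_n\|_{H^1(\mathbb{R}^2)}<\infty$ and $K:=\sup_n\int_{\mathbb{R}^2}V|u_n|^2\,dx<\infty$. By reflexivity of $H^1(\mathbb{R}^2)$ and the Rellich--Kondrachov theorem on balls, after passing to a subsequence there is $u$ with $u_n\rightharpoonup u$ weakly in $H^1(\mathbb{R}^2)$ and $u_n\to u$ strongly in $L^p(B_R)$ for every $R>0$ and every $p\in[2,\infty)$ (recall $H^1(\mathbb{R}^2)\hookrightarrow L^p(\mathbb{R}^2)$ for all finite $p$). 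Fatou's lemma applied to $\int_{\mathbb{R}^2}V|u_n|^2\,dx$ shows $u\in\mathcal{H}$.

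The case $p=2$ is where the hypothesis on $V$ is used. Fix $\varepsilon>0$. Since $V(x)\to\infty$ as $|x|\to\infty$, choose $R$ so large that $V(x)\geq\varepsilon^{-1}$ for $|x|\geq R$. Then for every $n$,
\[
\int_{|x|\geq R}|u_n|^2\,dx\leq\varepsilon\int_{|x|\geq R}V(x)|u_n|^2\,dx\leq\varepsilon K,
\]
and the same tail bound holds for $u$ by weak lower semicontinuity of $v\mapsto\int_{|x|\geq R}V|v|^2\,dx$. Splitting $\mathbb{R}^2=B_R\cup(\mathbb{R}^2\setminus B_R)$ and using the triangle inequality,
\[
\|u_n-u\|_{L^2(\mathbb{R}^2)}\leq\|u_n-u\|_{L^2(B_R)}+\|u_n\|_{L^2(\mathbb{R}^2\setminus B_R)}+\|u\|_{L^2(\mathbb{R}^2\setminus B_R)}\leq\|u_n-u\|_{L^2(B_R)}+2\sqrt{\varepsilon K}.
\]
Letting $n\to\infty$ and using $u_n\to u$ in $L^2(B_R)$ gives $\limsup_n\|u_n-u\|_{L^2(\mathbb{R}^2)}\leq 2\sqrt{\varepsilon K}$, and since $\varepsilon>0$ is arbitrary we conclude $u_n\to u$ strongly in $L^2(\mathbb{R}^2)$.

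For $p\in(2,\infty)$ I would not try to control $L^p$ tails directly but instead interpolate. Fix any $r\in(p,\infty)$ and $\theta\in(0,1)$ with $\tfrac1p=\tfrac{1-\theta}{2}+\tfrac{\theta}{r}$; then
\[
\|u_n-u\|_{L^p(\mathbb{R}^2)}\leq\|u_n-u\|_{L^2(\mathbb{R}^2)}^{1-\theta}\,\|u_n-u\|_{L^r(\mathbb{R}^2)}^{\theta}.
\]
The first factor tends to $0$ by the case $p=2$, while the second is uniformly bounded because $(u_n-u)$ is bounded in $H^1(\mathbb{R}^2)\hookrightarrow L^r(\mathbb{R}^2)$. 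Hence $u_n\to u$ in $L^p(\mathbb{R}^2)$, which proves the claimed compactness.

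\textbf{Main obstacle.} The only substantive point is the failure of compactness of $H^1(\mathbb{R}^2)\hookrightarrow L^p(\mathbb{R}^2)$ at spatial infinity; this is exactly what the coercivity $V(x)\to\infty$ repairs, via the uniform tail estimate above for $p=2$. The remaining steps --- local Rellich compactness and the interpolation inequality --- are routine, so the proof amounts to stating the tail estimate carefully and then assembling these standard ingredients.
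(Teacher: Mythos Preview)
Your argument is correct and is the standard one. The paper itself does not give a proof of this lemma; it only remarks that it ``can be proved as that in \cite{YY}'' and states the result. Your tail estimate via the coercivity $V(x)\to\infty$, combined with local Rellich compactness for $p=2$ and then interpolation against a bounded higher $L^r$ norm for $p>2$, is exactly the classical route and almost certainly what the cited reference does as well. There is nothing to compare or correct.
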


\bigskip

Now, we show that there is a minimizer of $m_k$, which is a critical point of $E_{a,q}(u)$. In the sequel, we denote $f^{t}(x)=tf(tx)$ for any function $f$.

\begin{Proposition}\label{prop:2.1} Suppose $(V_1)$ and $(V_2)$.
For each $a\in (0,a^*)$, there exists a positive critical point  $u_q\in A_{\tau_q^2}$ of the functional $E_{a,q}(u)$  such that
\[
E_{a,q}(u_q)= \inf_{u\in A_{\tau_q^2}}E_{a,q}(u)
\]
if $q>2$ close to $2$,  where $\tau_q^2$ is defined in \eqref{eq:1.11}.
\end{Proposition}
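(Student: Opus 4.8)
The plan is to realize $u_q$ as a minimizer of the constrained functional $E_{a,q}$ over the set $A_{\tau_q^2}$ and then to show that this minimizer lies strictly in the interior, so that it is a genuine critical point on $S(1)$. First I would check that $m_{\tau_q^2}=\inf_{u\in A_{\tau_q^2}}E_{a,q}(u)$ is finite and that minimizing sequences are bounded in $\mathcal{H}$. Boundedness of the kinetic term is automatic from the constraint $\int_{\mathbb{R}^2}|\nabla u|^2\le \tau_q^2$; boundedness of $\int V|u|^2$ and hence of the full $\mathcal H$-norm follows because the Gagliardo--Nirenberg inequality (the version recorded before \eqref{eq:5.2}) controls $\int_{\mathbb{R}^2}|u|^{q+2}$ by $\|\nabla u\|_2^q\le\tau_q^q$ times $\|u\|_2^2=1$, so $E_{a,q}(u)$ is bounded below on $A_{\tau_q^2}$ and, combined with the assumption $(V_1)$ and a lower bound of the form $E_{a,q}(u)\ge \frac12\int V|u|^2 - C$, forces $\int V|u|^2$ to stay bounded along a minimizing sequence.

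Next I would extract a weak limit $u_q$. By Lemma \ref{lem:2.1} the embedding $\mathcal{H}\hookrightarrow L^p(\mathbb{R}^2)$ is compact for every $p\in[2,\infty)$, so a minimizing sequence $u_n\rightharpoonup u_q$ in $\mathcal H$ converges strongly in $L^{q+2}$; thus $\int_{\mathbb{R}^2}|u_n|^{q+2}\to\int_{\mathbb{R}^2}|u_q|^{q+2}$ and $\|u_q\|_2=1$, so $u_q\in S(1)$. Weak lower semicontinuity of $\int|\nabla u|^2$ keeps $u_q\in A_{\tau_q^2}$ (actually $\le\tau_q^2$, i.e. in the closure) and of $\int V|u|^2$ (using Fatou together with $(V_1)$) gives $E_{a,q}(u_q)\le \liminf E_{a,q}(u_n)=m_{\tau_q^2}$, whence equality and $u_q$ is a minimizer. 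Replacing $u_q$ by $|u_q|$ does not increase the energy and preserves all constraints, so we may take $u_q\ge 0$; the strong maximum principle applied to the equation it solves then yields $u_q>0$.

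The crucial step — and the one I expect to be the main obstacle — is to prove that the minimizer does not sit on the boundary $\partial A_{\tau_q^2}=\{\|\nabla u\|_2^2=\tau_q^2\}$. The idea is to compare $m_{\tau_q^2}$ with a carefully chosen competitor and exploit the precise value of $\tau_q^2$ from \eqref{eq:1.11}. Using the scaling $f^{t}(x)=tf(tx)$, which preserves the $L^2$-norm and sends $\|\nabla f^t\|_2^2=t^2\|\nabla f\|_2^2$, one evaluates $E_{a,q}(f^t)=\tfrac{t^2}{2}\|\nabla f\|_2^2+\tfrac12\int V(x/t)|f|^2 - \tfrac{a t^q}{q+2}\int|f|^{q+2}$ and, optimizing in $t$ on the test function $\varphi_q$ of \eqref{eq:1.9} (scaled to unit mass), one sees via the Pohozaev identities \eqref{eq:3.3} that the function $t\mapsto \tfrac{t^2}{2}-\tfrac{a t^q}{q+2}\cdot\tfrac{q+2}{2 a_q^*}=\tfrac{t^2}{2}-\tfrac{a t^q}{2a_q^*}$ has its maximum over the relevant range exactly at $t^2=\tau_q^2$, with the value of the kinetic part at the corresponding optimum strictly below $\tau_q^2$ once the potential contribution is accounted for. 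Concretely, one shows $m_{\tau_q^2}<\inf_{u\in\partial A_{\tau_q^2}}E_{a,q}(u)$: on $\partial A_{\tau_q^2}$ the bad term $-\tfrac{a t^q}{q+2}\int|u|^{q+2}$ cannot compensate enough because of the Gagliardo--Nirenberg bound and the definition of $\tau_q$, so the energy there is bounded below by a quantity strictly exceeding $m_{\tau_q^2}$ for $q$ close to $2$ (here one uses $a<a^*$, $a_q^*\to a^*$ from Lemma \ref{lem:5.1}, and continuity in $q$). Hence every minimizer lies in the open set $A_{\tau_q^2}$, the constraint $\|\nabla u\|_2^2<\tau_q^2$ is inactive, and $u_q$ is a critical point of $E_{a,q}$ restricted to $S(1)$; the Lagrange multiplier rule then produces $\mu_q$ so that $u_q$ solves \eqref{eq:0.1a}. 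Elliptic regularity upgrades $u_q$ to a classical positive solution, completing the proof.
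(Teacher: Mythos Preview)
Your overall strategy coincides with the paper's: minimize over $A_{\tau_q^2}$, use the compact embedding of Lemma~\ref{lem:2.1} to produce a minimizer, then rule out the boundary by a strict comparison, and conclude with the maximum principle. The existence and positivity parts are fine and match the paper closely.

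The crucial step, however, is not carried out. Noting that $t\mapsto g(t^2):=\tfrac{t^2}{2}-\tfrac{a t^q}{2a_q^*}$ attains its maximum at $t^2=\tau_q^2$ is exactly the Gagliardo--Nirenberg computation that gives the \emph{lower} bound $\inf_{\partial A_{\tau_q^2}}E_{a,q}\ge g(\tau_q^2)=\tfrac{q-2}{2q}\tau_q^2=:\theta\tau_q^2$; it does not by itself furnish a competitor with strictly smaller energy, and your phrase ``the value of the kinetic part at the corresponding optimum strictly below $\tau_q^2$ once the potential contribution is accounted for'' does not correspond to any valid step (at $t=\tau_q$ the kinetic energy of the scaled $\varphi_q$ is exactly $\tau_q^2$, and there is no optimization to perform). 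What is missing is the quantitative observation that $\theta\tau_q^2\to+\infty$ as $q\to 2_+$ (this is where $a<a^*$ and $a_q^*\to a^*$ enter), together with an explicit choice of competitor whose energy stays below $\theta\tau_q^2$. The paper fixes $\varphi\in C_c^\infty(\mathbb R^2)$ with $\|\varphi\|_2=1$ and picks $t_0$ so that $\|\nabla\varphi^{t_0}\|_2^2=\tfrac14\theta\tau_q^2$; then $E_{a,q}|_{V=0}(\varphi^{t_0})\le \tfrac18\theta\tau_q^2$ while $\tfrac12\int V(x/t_0)|\varphi|^2\to V(0)$ since $t_0\to\infty$, yielding $m_{\tau_q^2}\le \tfrac18\theta\tau_q^2+2V(0)<\theta\tau_q^2$ for $q$ near $2$. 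Your idea of using the normalized $\varphi_q$ as the base profile would work equally well (the uniform exponential decay in Lemma~\ref{lem:5.1} controls the potential term), but you still have to \emph{choose} a scale $t<\tau_q$ and carry out this two-line comparison rather than ``optimize in $t$''.
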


\bigskip

\begin{proof}
We consider the minimization problem
\[
m_k=\inf_{u\in A_k}E_{a,q}(u).
\]

Fix $k>0$, we claim that $m_k$ is achieved. Indeed,  let $\{u_n\}\subset A_k$ be a minimizing sequence of $m_k$, which is obviously bounded in $\mathcal{H}$.
We can assume that it converges weakly to $u_k\in \mathcal{H}$. By Lemma \ref{lem:2.1},  we have $u_k\in S(1)$. The lower semi-continuity of the functional $E_{a,q}$ implies
\[
E_{a,q}(u_k)\leq \liminf_{n\rightarrow \infty} E_{a,q}(u_n)=m_k.
\]
Therefore, $E_{a,q}(u_k)=m_k$ and $\|\nabla u_k\|_2^2\leq k$. That is, $u_k$ is a minimizer of $m_k$.

Next,  we show that  $u_k$ is a critical point of $E_{a,q}(u)$.  It is sufficient to prove $u_k\not\in\partial A_k$. Now, we will find a suitable $k>0$ so that $u_k$ belongs to $A_k$.
This will be done if we can find an element $\varphi\in A_k$ so that
\begin{equation}\label{eq:2.6a}
E_{a,q}(\varphi)<\inf_{u\in\partial A_k}E_{a,q}(u).
\end{equation}

In the following, we show that inequality \eqref{eq:2.6a} is valid for $k= \tau^2_q$, where $\tau_q$ is given in \eqref{eq:1.11}.
By the Gagliardo-Nirenberg inequality\cite{W}, we have
\begin{equation}\label{eq:2.4}
\int_{\mathbb{R}^2}|u|^{q+2}\,dx\leq \frac{q+2}{2 a_q^*}\Big(\int_{\mathbb{R}^2}|\nabla u|^2\,dx\Big)^{\frac{q}{2}}\int_{\mathbb{R}^2}u^2\,dx,
\end{equation}
where $a_q^*$ is defined in \eqref{eq:1.6a}.  This implies that for any $u\in S(1)$,
\begin{equation}\label{eq:2.5}
E_{a,q}|_{V=0}(u)\geq \frac{1}{2}\int_{\mathbb{R}^2}|\nabla u|^2\,dx-\frac{a}{2a^*_q}\Big(\int_{\mathbb{R}^2}|\nabla u|^2\,dx\Big)^{\frac{q}{2}},
\end{equation}
where $E_{a,q}|_{V=0}(u)$ denotes the functional obtained by taking $V\equiv 0$ in $E_{a,q}$.
In view of \eqref{eq:2.5}, we consider the function $g: \mathbb{R}\mapsto \mathbb{R}$ defined by
\[
g(s)=\frac{1}{2}s-\frac{a}{2a_q^*}s^{\frac{q}{2}}.
\]
We may verify that $g$ is increasing in $(0,{\tau_q^2})$ and decreasing in $({\tau_q^2},\infty)$.
Therefore, the function $g$ attains its maximum at $s={\tau_q^2}$, and
\begin{equation}\label{eq:2.8}
g({\tau_q^2})=\frac{q-2}{2q}\tau_q^2.
\end{equation}
Since $a<a^*$ and by Lemma \ref{lem:5.1} $a_q^*\rightarrow a^*$ as $q\rightarrow2$, we remark that
\begin{equation*}
\frac{q-2}{2q}\tau_q^2\rightarrow +\infty\ \ {
\rm as}\ \ q\rightarrow2_+.
\end{equation*}
By \eqref{eq:2.5}, we have
\begin{equation}\label{eq:2.9}
\inf_{u\in \partial A_{{\tau_q^2}}}E_{a,q}(u)\geq \inf_{u\in \partial A_{{\tau_q^2}}}E_{a,q}|_{V=0}(u)\geq \frac{q-2}{2q}{\tau_q^2}=:\theta {\tau_q^2}.
\end{equation}
Apparently, if $u\in A_{\frac{1}{4}\theta {\tau_q^2}}$, then
\begin{equation}\label{eq:2.10}
E_{a,q}|_{V=0}(u)\leq \frac{1}{8}\theta {\tau_q^2}.
\end{equation}

Choose $\varphi$ such that
$\varphi\in C_c^\infty(\mathbb{R}^2)$, $\varphi\geq 0,$ and $\|\varphi\|_2^2=1$ and let
\begin{equation}\label{eq:2.10a}
t_0= t_0(q)=\frac{1}{4}\theta {\tau_q^2}\Big(\int_{\mathbb{R}^2}|\nabla \varphi|^2\,dx\Big)^{-\frac{1}{2}}
\end{equation}
be such that
\begin{equation}\label{eq:2.11}
\int_{\mathbb{R}^2}|\nabla \varphi^{t_0}|^2\,dx=\frac{1}{4}\theta {\tau_q^2}.
\end{equation}

Since $\tau_q\rightarrow \infty$ as $q\to 2_+$, so does $t_0$. By Lemma \ref{lem:5.1} and the Lebesgue dominated convergence theorem,
\begin{equation}\label{eq:2.12}
\int_{\mathbb{R}^2}V(x)|\varphi^{t_0}(x)|^2\,dx=\int_{\mathbb{R}^2} V\Big(\frac{x}{t_0}\Big)|\varphi(x)|^2\,dx\rightarrow V(0)
\end{equation}
as $q\to 2_+$.
It follows from \eqref{eq:2.11} and \eqref{eq:2.12} that
\begin{equation*}
E_{a,q}(\varphi^{t_0})\leq \frac{1}{8}\theta {\tau_q^2}+2V(0).
\end{equation*}
We deduce from \eqref{eq:2.9} that
\begin{equation}\label{eq:2.13}
\inf_{u\in \partial A_{{\tau_q^2}}}E_{a,q}(u)-E_{a,q}(\varphi^{t_0})\geq \frac{7}{8}\theta {\tau_q^2}-2V(0)>0
\end{equation}
for $q>2$ and close to $2$. Hence, we have $\varphi^{t_0}\in A_{{\tau_q^2}}$  and
\[
m_{\tau_q^2}\leq E_{a,q}(\varphi^{t_0})<\inf_{u\in \partial A_{{\tau_q^2}}}E_{a,q}(u)
\]
for $q>2$ and close to $2$. Consequently,  $E_{a,q}(u)$ attains its minimum at $u_{{\tau_q^2}}\in A_{{\tau_q^2}}$ for $q$ close to 2.
Note that $E_{a,q}(|u|)\leq E_{a,q}(u)$, we can assume that $u_k$ is  nonnegative. In addition, $u_{{\tau_q^2}}$ solves \eqref{eq:1.6b} for some Lagrange multiplier $\mu_q$. By the strong maximum principle, $u_{{\tau_q^2}}>0$. The proof is complete.
\end{proof}

\bigskip

Once we show that the functional $E_{a,q}(u)$ has a mountain-pass geometry, we may find a $(PS)$ sequence of $E_{a,q}(u)$, which is close to the Pohozaev manifold. Indeed, we have the following lemma, which is motivated by \cite{BL, GH, JE}.
\begin{Lemma}\label{lem:1.1}
Suppose
\begin{equation}\label{mpt1}
 c_q>\max\{E_{a,q}(\varphi_1), E_{a,q}(\varphi_2)\},
\end{equation}
where $c_q$ is defined in \eqref{eq:2.18a}.   Then, there is  a sequence $\{u_n\}\subset S(1)$  such that
\begin{equation}\label{eq:2.14}
\begin{split}
E_{a,q}(u_n)\rightarrow c_q,\\
\|E_{a,q}'|_{S(1)}(u_n)\|_{\mathcal{H}^{-1}}\rightarrow 0,\\
Q_q(u_n)\to 0, \ \ as \ \ n\rightarrow \infty,
\end{split}
\end{equation}
where
\begin{equation}\label{eq:2.15}
Q_q(u)=\int_{\mathbb{R}^2}|\nabla u|^2\,dx-\frac{1}{2}\int_{\mathbb{R}^2}x\cdot\nabla V|u|^2\,dx-\frac{qa}{q+2}\int_{\mathbb{R}^2}|u|^{q+2}\,dx
\end{equation}
and $\mathcal{H}^{-1}$ denotes the dual space of $\mathcal{H}$.

Moreover, there is a sequence $v_n\in S(1)$ with $v_n\geq 0$ such that
\begin{equation}\label{eq:2.15a}
\lim_{n\to\infty}\|u_n-v_n\|_\mathcal{H}=0.\\
\end{equation}
\end{Lemma}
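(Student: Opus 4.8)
\emph{Proof idea.} The plan is to transfer a Palais--Smale sequence for the augmented functional $\mathcal{E}_{a,q}$ on $S(1)\times\mathbb{R}$ down to $S(1)$, exploiting that the extra $\mathbb{R}$-variable is exactly the dilation parameter and hence records the Pohozaev direction. First, by Lemma~\ref{lem:5.2} we have $b_q=c_q$, and since $\mathcal{E}_{a,q}(\varphi_i,0)=E_{a,q}(H(\varphi_i,0))=E_{a,q}(\varphi_i)$, hypothesis \eqref{mpt1} is precisely the mountain-pass condition \eqref{mpt0}. Hence Proposition~\ref{prop:2.1aa} applies and yields paths $\gamma_n=(g_n,0)\in\mathcal{P}_q$ with $g_n\ge 0$ and points $(w_n,s_n)\in S(1)\times\mathbb{R}$ obeying \eqref{eq:5.6}--\eqref{eq:5.9}. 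I would then set $u_n:=H(w_n,s_n)$; since $H(\cdot,s)$ preserves the $L^2$-norm on $\mathbb{R}^2$, one has $u_n\in S(1)$, and $E_{a,q}(u_n)=\mathcal{E}_{a,q}(w_n,s_n)\to b_q=c_q$ by \eqref{eq:5.7}, which is the first line of \eqref{eq:2.14}.

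Next I read off the two differential conditions from \eqref{eq:5.8}. The group law $H(H(v,s),t)=H(v,s+t)$ gives $\mathcal{E}_{a,q}(w_n,s_n+t)=E_{a,q}(H(u_n,t))$, so differentiating at $t=0$ yields $\partial_s\mathcal{E}_{a,q}(w_n,s_n)=\frac{d}{dt}\big|_{t=0}E_{a,q}(H(u_n,t))=Q_q(u_n)$ with $Q_q$ as in \eqref{eq:2.15}. Testing the constrained differential at $(w_n,s_n)$ against the direction $(0,1)$ and using \eqref{eq:5.8} gives $|Q_q(u_n)|\le\|\mathcal{E}_{a,q}'|_{S(1)\times\mathbb{R}}(w_n,s_n)\|_{Y^{-1}}\to 0$, the third line of \eqref{eq:2.14}. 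For the tangential part, observe that $v\mapsto H(v,s)$ is linear on $\mathcal{H}$ and preserves the $L^2$ inner product, hence maps $T_{w_n}S(1)$ onto $T_{u_n}S(1)$; for $\psi\in T_{w_n}S(1)$ a direct computation gives $\mathcal{E}_{a,q}'(w_n,s_n)[(\psi,0)]=E_{a,q}'(u_n)[H(\psi,s_n)]=E_{a,q}'|_{S(1)}(u_n)[H(\psi,s_n)]$. Writing an arbitrary unit $\phi\in T_{u_n}S(1)$ as $\phi=H(\psi,s_n)$ with $\psi=H(\phi,-s_n)$, this yields $\|E_{a,q}'|_{S(1)}(u_n)\|_{\mathcal{H}^{-1}}\le\|H(\cdot,-s_n)\|_{\mathcal{L}(\mathcal{H})}\,\|\mathcal{E}_{a,q}'|_{S(1)\times\mathbb{R}}(w_n,s_n)\|_{Y^{-1}}$.

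The one genuinely delicate point --- and the step I expect to be the main obstacle --- is a uniform operator bound $\sup_{|s|\le 1}\|H(\cdot,s)\|_{\mathcal{L}(\mathcal{H})}<\infty$, i.e.\ controlling how the dilation distorts the potential term. The gradient part scales by $e^{2s}$ and the $L^2$ part is invariant, both harmless; for the potential part one computes $\frac{d}{ds}V(e^{-s}x)=-(e^{-s}x)\cdot\nabla V(e^{-s}x)$, so condition $(V_2)$ gives $\big|\frac{d}{ds}\log(1+V(e^{-s}x))\big|\le C_2$, whence $1+V(e^{-s}x)\le e^{C_2|s|}(1+V(x))$ and $\int_{\mathbb{R}^2}V(e^{-s}x)|\psi|^2\,dx\le e^{C_2|s|}\int_{\mathbb{R}^2}(1+V(x))|\psi|^2\,dx$. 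Since \eqref{eq:5.9} forces $|s_n|\to 0$ (the second coordinate of $\gamma_n$ is identically $0$), the norms $\|H(\cdot,\pm s_n)\|_{\mathcal{L}(\mathcal{H})}$ stay bounded, so the estimate above gives $\|E_{a,q}'|_{S(1)}(u_n)\|_{\mathcal{H}^{-1}}\to 0$, completing \eqref{eq:2.14}. Finally, choosing $t_n\in[0,1]$ with $\mathrm{dist}\big((w_n,s_n),(g_n(t_n),0)\big)\to 0$ and setting $v_n:=H(g_n(t_n),s_n)$, we get $v_n\in S(1)$ and $v_n\ge 0$ (as $g_n\ge 0$ and $H$ preserves sign), while by linearity $\|u_n-v_n\|_{\mathcal{H}}=\|H(w_n-g_n(t_n),s_n)\|_{\mathcal{H}}\le\|H(\cdot,s_n)\|_{\mathcal{L}(\mathcal{H})}\,\|w_n-g_n(t_n)\|_{\mathcal{H}}\to 0$, which is \eqref{eq:2.15a}.
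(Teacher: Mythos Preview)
Your proof is correct and follows essentially the same route as the paper: invoke Proposition~\ref{prop:2.1aa} via $b_q=c_q$, set $u_n=H(w_n,s_n)$, read off $Q_q(u_n)\to 0$ from the $s$-derivative and the constrained gradient from the $u$-derivative, and use $|s_n|\to 0$ (from \eqref{eq:5.9}) to control the dilation. Two minor differences: the paper simply asserts $\|\psi\|_{\mathcal{H}}\le 4\|\varphi\|_{\mathcal{H}}$ where you give a genuine argument via $(V_2)$, and the paper takes $v_n=g_n(t_n)$ (then appeals to $\|H(w_n,s_n)-w_n\|_{\mathcal{H}}\to 0$) whereas your choice $v_n=H(g_n(t_n),s_n)$ sidesteps that extra continuity step---both are fine.
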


\begin{proof}
By Lemma \ref{lem:5.2}, we have $b_q = c_q$. Hence,  equation \eqref{mpt1} implies that equation \eqref{mpt0} holds true, so do the results in Proposition \ref{prop:2.1}.

Let $u_n=H(w_n, s_n)$. By \eqref{eq:5.7} and Lemma \ref{lem:5.2}, we have
\[
E_{a,q}(u_n)=\mathcal{E}_{a,q}(w_n,s_n)\rightarrow c_q.
\]
By \eqref{eq:5.8},
\begin{equation}\label{eq:5.10}
\langle \mathcal{E}'_{a,q}(w_n,s_n), z\rangle_{Y^{-1}, Y}=o(\|z\|_Y),
\end{equation}
for all $z\in \mathcal{T}_{(w_n,s_n)}:=\{(z_1,z_2)\in Y: \langle w_n, z_1\rangle_2=0\}$.

Choosing $z=(0,1)$ in \eqref{eq:5.10}, we find
\begin{equation*}
\begin{split}
&\langle \mathcal{E}'_{a,q}(w_n, s_n), (0,1)\rangle\\
&=\frac{d}{dt}\mathcal{E}_{a,q}(w_n, s_n+t)|_{t=0}\\
&=\frac{d}{dt}E_{a,q}(H(w_n,s_n+t))|_{t=0}\\
&=\frac{d}{dt}E_{a,q}\big(e^{s_n+t}w_n(e^{s_n+t}x)\big)|_{t=0}\\
&=e^{2s_n}\int_{\mathbb{R}^2}|\nabla w_n|^2\,dx-\frac{1}{2}\int_{\mathbb{R}^2}\nabla V(e^{-s_n}x)\cdot e^{-s_n}x|w_n|^2\,dx-\frac{qa}{q+2}e^{qs_n}\int_{\mathbb{R}^2}|w_n|^{q+2}\,dx\\
&=\int_{\mathbb{R}^2}|\nabla u_n|^2\,dx-\frac{1}{2}\int_{\mathbb{R}^2}x\cdot\nabla V|u_n|^2\,dx-\frac{qa}{q+2}\int_{\mathbb{R}^2}|u_n|^{q+2}\,dx\\
&=Q_q(u_n)\rightarrow 0,
\end{split}
\end{equation*}
as $n\to\infty$.

For any $\varphi\in T_{u_n}:=\{u_n\in \mathcal{H}, \langle u_n, \varphi\rangle_2=0\}$, setting
$\psi=e^{-s_n}\varphi(e^{-s_n}x)$,
\begin{equation}\label{eq:5.11a}
\begin{split}
\langle E_{a,q}'|_{S(1)}(u_n), \varphi\rangle&=\langle E_{a,q}'(u_n), \varphi\rangle\\
&=\frac{d}{dt}E_{a,q}(u_n+t\varphi)|_{t=0}\\
&=\frac{d}{dt}E_{a,q}(e^{s_n}w_n(e^{s_n}x))+te^{s_n}\psi(e^{s_n}x))|_{t=0}\\
&=\frac{d}{dt}E_{a,q}(H(w_n+t\psi, s_n))|_{t=0}\\
&=\frac{d}{dt}\mathcal{E}_{a,q}(w_n+t\psi, s_n)|_{t=0}\\
&=\langle \mathcal{E}_{a,q}'(w_n,s_n),(\psi,0)\rangle.
\end{split}
\end{equation}
Since $\langle w_n,\psi\rangle_2=\langle u_n, \varphi\rangle_2=0$,  $(\psi, 0)\in \mathcal{T}_{(w_n,s_n)}$. By \eqref{eq:5.8} and \eqref{eq:5.11a}, we have
\begin{equation}\label{eq:5.11}
\langle E_{a,q}'|_{S(1)}(u_n), \varphi\rangle=o(\|\psi\|_{\mathcal{H}}),
\end{equation}
and \eqref{eq:5.9} implies
\begin{equation}\label{3.21-a}
|s_n|\leq dist\big((w_n,s_n), (g_n,0)\big)\rightarrow 0
\end{equation}
as $n\to\infty$.  Hence,
$\|\psi\|_{\mathcal{H}}\leq 4\|\varphi\|_{\mathcal{H}}$. It results
$$
\langle E_{a,q}'|_{S(1)}(u_n),\varphi\rangle=o(\|\varphi\|_{\mathcal{H}}).
$$
By \eqref{eq:5.9} and \eqref{3.21-a}, we have $\lim_{n\to\infty}dist_{\mathcal{H}}(w_n, g_n)=0$. So we may choose $t_n\in [0,1]$ such that
\[
\|w_n-g_n(t_n)\|_{\mathcal{H}}=dist_{\mathcal{H}}(w_n,g_n).
\]
Let $v_n=g_n(t_n)\geq 0$. We have
$\lim_{n\to\infty}\|w_n-v_n\|_{\mathcal{H}}=0$. This with \eqref{3.21-a} yields
\[
\lim_{n\to \infty}\|u_n-v_n\|_{\mathcal{H}}=\lim_{n\to \infty}\|H(w_n,s_n)-v_n\|_{\mathcal{H}}=\lim_{n\to \infty}\|w_n-v_n\|_{\mathcal{H}}=0.
\]

\end{proof}

\bigskip

Now, we seek for the second critical point of $E_{a,q}(u)$ by the variant mountain pass theorem.
\begin{Proposition}\label{prop:2.2} Suppose $(V_1)$ and $(V_2)$ hold. If $q>2$ and close to $2$, then $E_{a,q}(u)$ admits a second critical point $v_q$ on $S(1)$ at the mountain pass level.
\end{Proposition}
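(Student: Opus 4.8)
The plan is to run the variant mountain pass scheme of Lemma~\ref{lem:1.1}, so the three tasks are: (1) exhibit endpoints $\varphi_1,\varphi_2\in S(1)$ for which the mountain pass geometry \eqref{mpt1} holds; (2) extract from Lemma~\ref{lem:1.1} the bounded Palais--Smale sequence, with the crucial boundedness coming from the Pohozaev term $Q_q(u_n)\to 0$; (3) pass to the limit to obtain a nontrivial nonnegative critical point $v_q$ on $S(1)$ at level $c_q$. For the geometry I would take $\varphi_1=u_q$ (or any fixed element of the local well $A_{\frac14\theta\tau_q^2}$, e.g.\ $\varphi^{t_0}$ from Proposition~\ref{prop:2.1}), which by \eqref{eq:2.10} satisfies $E_{a,q}(\varphi_1)\le\frac18\theta\tau_q^2<\theta\tau_q^2$, while by \eqref{eq:2.9} every $u\in\partial A_{\tau_q^2}$ has $E_{a,q}(u)\ge\theta\tau_q^2$. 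For $\varphi_2$ I would fix $\psi\in C_c^\infty(\mathbb{R}^2)$, $\psi\ge0$, $\|\psi\|_2=1$, and use the dilation $\psi^t(x)=t\psi(tx)$: since $q>2$ is $L^2$-supercritical, $E_{a,q}(\psi^t)=\tfrac12 t^2\|\nabla\psi\|_2^2+\tfrac12\int V(x/t)\psi^2-\tfrac{a}{q+2}t^q\|\psi\|_{q+2}^{q+2}\to-\infty$ as $t\to\infty$; so for $t$ large enough $\varphi_2:=\psi^t$ lies ``beyond the ridge'' with $E_{a,q}(\varphi_2)<0$. Any path in $\Gamma_q$ from $\varphi_1$ to $\varphi_2$ must cross $\partial A_{\tau_q^2}$ (the $H^1$-gradient norm is continuous along the path, starts below $\tfrac14\theta\tau_q^2$ and ends above $\tau_q^2$ once $t$ is chosen large), hence $c_q\ge\theta\tau_q^2>\max\{E_{a,q}(\varphi_1),E_{a,q}(\varphi_2)\}$, which is exactly \eqref{mpt1}.

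With \eqref{mpt1} in hand, Lemma~\ref{lem:1.1} produces $\{u_n\}\subset S(1)$ with $E_{a,q}(u_n)\to c_q$, $\|E_{a,q}'|_{S(1)}(u_n)\|_{\mathcal H^{-1}}\to0$, $Q_q(u_n)\to0$, together with nonnegative $v_n\in S(1)$ with $\|u_n-v_n\|_{\mathcal H}\to0$. The next step is to show $\{u_n\}$ is bounded in $\mathcal H$. Write $t_n=\int|\nabla u_n|^2$, $V_n=\int V|u_n|^2$, $N_n=\int|u_n|^{q+2}$. From $E_{a,q}(u_n)\to c_q$ we get $\tfrac12(t_n+V_n)-\tfrac{a}{q+2}N_n=c_q+o(1)$, and from $Q_q(u_n)\to0$, together with $(V_2)$ (which controls $\int x\cdot\nabla V\,|u_n|^2$ by $C_2(V_n+\|u_n\|_2^2)=C_2(V_n+1)$), we get $t_n-\tfrac{qa}{q+2}N_n = \tfrac12\int x\cdot\nabla V|u_n|^2+o(1)$, i.e.\ $N_n$ is controlled by $t_n+V_n$. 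Eliminating $N_n$ between the two relations yields, after using $(V_2)$ to absorb the potential term, a bound of the form $(\tfrac{q-2}{q})\,t_n \le C(1+V_n)+o(1)$ and a matching relation from the Lagrange-multiplier equation; combined with $(V_1)$ (which makes $V_n\ge0$) and a short bootstrap as in the estimate $(q-1)V+x\cdot\nabla V\ge -C_1$, one concludes $t_n+V_n$ is bounded. (This is the standard supercritical trick: the Pohozaev constraint converts the unboundedness of $E_{a,q}$ along dilations into an a priori $H^1$ bound.) I expect this elimination step --- making the two linear relations in $(t_n,V_n,N_n)$ close up to a genuine bound, rather than to a tautology, using precisely the two inequalities in $(V_2)$ and the fact $q>2$ --- to be the main obstacle, since the coefficient $\tfrac{q-2}{q}$ degenerates as $q\to2_+$ and one must check the bound is still effective for each fixed $q\in(2,2+\varepsilon_0)$ (uniformity in $q$ is not needed here, only for the later asymptotics).

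Once $\{u_n\}$ is bounded in $\mathcal H$, the compact embedding $\mathcal H\hookrightarrow L^p(\mathbb{R}^2)$ for $p\in[2,\infty)$ (Lemma~\ref{lem:2.1}) gives $u_n\rightharpoonup v_q$ in $\mathcal H$ and $u_n\to v_q$ strongly in $L^2$ and $L^{q+2}$; in particular $v_q\in S(1)$ (the $L^2$-mass is preserved), so $v_q\ne0$, and $v_n\to v_q$ as well, so $v_q\ge0$. The bounded Lagrange multipliers $\mu_n$ (read off from $E_{a,q}'|_{S(1)}(u_n)\to0$ via $\mu_n=\int|\nabla u_n|^2+\int V|u_n|^2-a\int|u_n|^{q+2}$, which is bounded) converge along a subsequence to some $\mu_q$, and passing to the limit in the equation $-\Delta u_n+V u_n=\mu_n u_n+a|u_n|^q u_n+o(1)$ in $\mathcal H^{-1}$ shows $v_q$ solves \eqref{eq:0.1a} with multiplier $\mu_q$; strong $L^{q+2}$-convergence then upgrades weak to strong $\mathcal H$-convergence via the equation, so $E_{a,q}(v_q)=c_q$, i.e.\ $v_q$ is a critical point at the mountain pass level. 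Finally $v_q\ge0$, $v_q\not\equiv0$ and the strong maximum principle give $v_q>0$, and $v_q\ne u_q$ because $E_{a,q}(v_q)=c_q\ge\theta\tau_q^2>\tfrac18\theta\tau_q^2\ge E_{a,q}(u_q)$. This completes the proof.
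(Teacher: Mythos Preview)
Your proposal is correct and follows essentially the same route as the paper: endpoints $\varphi^{t_0}$ and a large dilation $\varphi^{t_1}$, the barrier $c_q\ge\theta\tau_q^2$ via crossing $\partial A_{\tau_q^2}$, Lemma~\ref{lem:1.1}, compactness of $\mathcal H\hookrightarrow L^p$, passage to the limit in the Lagrange-multiplier equation, and the strong maximum principle. The boundedness step you flag as the main obstacle is in fact a one-liner: the algebraic identity
\[
\frac{q-2}{2}\int_{\mathbb{R}^2}|\nabla u|^2\,dx+\frac{1}{2}\int_{\mathbb{R}^2}\big(qV+x\cdot\nabla V\big)|u|^2\,dx=qE_{a,q}(u)-Q_q(u)
\]
combined with $(q-1)V+x\cdot\nabla V\ge -C_1$ from $(V_2)$ gives $\frac{q-2}{2}t_n+\frac12 V_n\le qc_q+\frac{C_1}{2}+o(1)$ directly, bounding both terms at once --- no bootstrap and no use of the Lagrange-multiplier equation are needed (so your intermediate inequality with $V_n$ on the right-hand side is unnecessary and in fact has the wrong sign).
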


\begin{proof} First, we verify that $E_{a,q}(u)$ has a mountain pass geometry on $S(1)$.

Let $\varphi\in C_c^\infty(\mathbb{R}^2)$ be such that $\varphi\geq 0,$ and $\|\varphi\|_2^2=1$.
Denote $\varphi^{t}(x)=t\varphi(tx)$. We find
\[
E_{a,q}(\varphi^t)=\frac{1}{2}t^2\int_{\mathbb{R}^2}|\nabla \varphi|^2\,dx+\frac{1}{2}\int_{\mathbb{R}^2}V\Big(\frac{x}{t}\Big)\varphi^2\,dx-
\frac{at^q}{q+2}\int_{\mathbb{R}^2}|\varphi|^{q+2}\,dx.
\]
Since
\begin{equation}\label{eq:2.16}
\int_{\mathbb{R}^2}V\Big(\frac{x}{t}\Big)\varphi^2\,dx\rightarrow V(0)\ \ {\rm as}\ \ t\rightarrow \infty
\end{equation}
and $q>2$, we have
\begin{equation*}
E_{a,q}(\varphi^t)\rightarrow -\infty\ \ {\rm as}\ \ t\rightarrow \infty.
\end{equation*}
 Taking
\begin{equation}\label{t1}
t_1=C2^{\frac{1}{(q-2)^2}}
\end{equation}
 with $C$ large enough and independent of $q$ such that $E_{a,q}(\varphi^{t_1})<0$ for $q>2$ and close to $2$, we define
\begin{equation}\label{eq:2.17}
\Gamma_q=\{g\in C([0,1],S(1))|g(0)=\varphi^{t_0}, g(1)=\varphi^{t_1}\}.
\end{equation}

 By the choice of $t_1$ and \eqref{eq:2.13}, we see that \eqref{mpt1} is valid.
This means that $E_{a,q}$ has the mountain pass geometry. By Lemma \ref{lem:1.1},
 there is a sequence $\{u_n\}\subset S(1)$  such that
\begin{equation}\label{eq:2.19}
\begin{split}
E_{a,q}(u_n)\rightarrow c_q,\\
\|E_{a,q}'|_{S(1)}(u_n)\|_{\mathcal{H}^{-1}}\rightarrow 0,\\
Q_q(u_n)\to0, \ \ as \ \ n\rightarrow \infty.
\end{split}
\end{equation}
Hence, \eqref{eq:2.14}, the  identity
\begin{equation}\label{eq:2.21}
\frac{q-2}{2}\int_{\mathbb{R}^2}|\nabla u|^2\,dx+\frac{1}{2}\int_{\mathbb{R}^2}(qV+x\cdot\nabla V)u^2\,dx=qE_{a,q}(u)-Q_q(u)
\end{equation}
and $(V_2)$ give that the sequence $\{u_n\}$ is bounded in $\mathcal{H}$. So there exists $v_q\in \mathcal{H}$ such that
$u_n\rightharpoonup v_q$ weakly in $\mathcal{H}$. Since $E_{a,q}'|_{S(1)}(u_n)\rightarrow 0$ in $\mathcal{H}^{-1}$, by Lemma 3 in \cite{BL} , there is a $\mu_q^n$ such that
\begin{equation}\label{eq:2.22}
-\Delta u_n+Vu_n-a|u_n|^qu_n-\mu_q^nu_n\rightarrow 0 \quad {\rm in}\quad \mathcal{H}^{-1}.
\end{equation}
Hence,
\begin{equation*}
\mu_q^n=\int_{\mathbb{R}^2}|\nabla u_n|^2\,dx+\int_{\mathbb{R}^2}V|u_n|^2\,dx-a\int_{\mathbb{R}^2}|u_n|^{q+2}\,dx+o(\|u_n\|_{\mathcal{H}}),
\end{equation*}
which is bounded. Without of the loss of generality, we may assume $\mu_q^n\rightarrow \mu_q$ as $n\rightarrow \infty$. It yields
\begin{equation}\label{eq:2.23}
-\Delta v_q+Vv_q-a|v_q|^qv_q-\mu_qv_q=0.
\end{equation}
We deduce from \eqref{eq:2.22} and \eqref{eq:2.23} that
\[
\int_{\mathbb{R}^2}|\nabla u_n|^2\,dx+\int_{\mathbb{R}^2}Vu_n^2\,dx-a\int_{\mathbb{R}^2}|u_n|^{q+2}\,dx-\mu_q^n=o(\|u_n\|_{\mathcal{H}})
\]
and
\[
\int_{\mathbb{R}^2}|\nabla v_q|^2\,dx+\int_{\mathbb{R}^2}V|v_q|^2\,dx-a\int_{\mathbb{R}^2}|v_q|^{q+2}\,dx-\mu_q=0.
\]
By the  Br\'{e}zis-Lieb lemma,
\[
\int_{\mathbb{R}^2}|\nabla u_n-\nabla v_q|^2\,dx+\int_{\mathbb{R}^2}V|u_n-v_q|^2\,dx=a\int_{\mathbb{R}^2}|u_n-v_q|^{q+2}\,dx+o(\|u_n\|_{\mathcal{H}}).
\]
Since $\mathcal{H}\hookrightarrow L^q(\mathbb{R}^2)$ is compact for any $q\geq2$, we have
\begin{equation}\label{qsl}
u_n\rightarrow v_q\ \  {\rm in} \ \ \mathcal{H}.
\end{equation}
Thus, $E_{a,q}(v_q)=c_q$ and $\|v_q\|_2=1$. Furthermore, by Lemma \ref{lem:1.1}, $\lim_{n\to\infty}\|v_n-v_q\|_{\mathcal{H}}=0$. Noting $v_n\geq 0$ and $c_q>0$, we have $v_q\geq 0$ and $v_q\neq0$. By the strong maximum principle, we conclude $v_q>0$.  This ends the proof.
\end{proof}

\bigskip

{\bf Proof of Theorem \ref{thm1}.} The results in Theorem \ref{thm1} follow by Propositions \ref{prop:2.1} and \ref{prop:2.2}. $\Box$

\bigskip

Now, we study the asymptotic behavior of critical points $u_q$ and $v_q$ as $q\to 2_+$.

\begin{Lemma}\label{lem:1.1a}
The minimizers $\{u_q\}$ is uniformly bounded in $\mathcal{H}$ for $q> 2$ close to 2.
\end{Lemma}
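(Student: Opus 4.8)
The goal is to show $\{u_q\}$ stays bounded in $\mathcal{H}$ as $q\to 2_+$. Recall from Proposition~\ref{prop:2.1} that $u_q\in A_{\tau_q^2}$ and $E_{a,q}(u_q)=m_{\tau_q^2}=\inf_{u\in A_{\tau_q^2}}E_{a,q}(u)$, so the gradient term already satisfies $\|\nabla u_q\|_2^2\le \tau_q^2$, which blows up; a bound on $\|\nabla u_q\|_2$ alone is therefore not free and must come from an energy comparison, and the potential term must be controlled separately. The idea is to first establish a good upper bound on the energy $E_{a,q}(u_q)$ using a well-chosen competitor, and then to feed that into the identity \eqref{eq:2.21} (with $Q_q$ replaced by its Lagrangian analogue, or directly via the Pohozaev identity Lemma~\ref{lem:5.3}) together with hypothesis $(V_2)$ to extract a bound on both $\|\nabla u_q\|_2^2$ and $\int_{\mathbb{R}^2}V u_q^2\,dx$.

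\textbf{Step 1: energy upper bound.} Use as competitor a fixed $w\in \mathcal{H}\cap S(1)$ with $\|\nabla w\|_2^2$ small, e.g. any fixed $\varphi\in C_c^\infty(\mathbb{R}^2)$ with $\|\varphi\|_2=1$ rescaled so that $\|\nabla \varphi\|_2^2$ is a small fixed constant $< \liminf_{q\to 2_+}\tau_q^2$ (possible since $\tau_q^2\to\infty$); then $\varphi\in A_{\tau_q^2}$ for $q$ close to $2$. For such a \emph{fixed} test function, $\int V|\varphi|^2\,dx$ is a fixed finite constant, and $\frac{a}{q+2}\int|\varphi|^{q+2}\,dx\to \frac{a}{4}\int|\varphi|^4\,dx$ as $q\to 2_+$. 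Hence
\[
E_{a,q}(u_q)\le E_{a,q}(\varphi)\le C_0
\]
for a constant $C_0$ independent of $q$ near $2$. (Equivalently one may reuse the competitor $\varphi^{t_0}$ from the proof of Proposition~\ref{prop:2.1}, but a fixed-scale one keeps $\int V\varphi^2$ obviously bounded.)

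\textbf{Step 2: convert the energy bound into an $\mathcal{H}$ bound.} Since $u_q$ solves \eqref{eq:1.6b} with multiplier $\mu_q$, the Pohozaev identity of Lemma~\ref{lem:5.3} gives $\|\nabla u_q\|_2^2-\tfrac12\int x\cdot\nabla V\,u_q^2\,dx=\tfrac{qa}{q+2}\int|u_q|^{q+2}\,dx$, i.e. $Q_q(u_q)=0$. Plug $Q_q(u_q)=0$ into identity \eqref{eq:2.21}:
\[
\frac{q-2}{2}\int_{\mathbb{R}^2}|\nabla u_q|^2\,dx+\frac12\int_{\mathbb{R}^2}\big(qV+x\cdot\nabla V\big)u_q^2\,dx=qE_{a,q}(u_q)\le qC_0.
\]
By $(V_2)$, $(q-1)V+x\cdot\nabla V\ge -C_1$, so $qV+x\cdot\nabla V=\big((q-1)V+x\cdot\nabla V\big)+V\ge V-C_1$; combined with $V\ge 0$ from $(V_1)$, the left side is bounded below by $\tfrac12\int V u_q^2\,dx-\tfrac12 C_1$ (dropping the nonnegative gradient term). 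This yields $\int_{\mathbb{R}^2}V u_q^2\,dx\le 2qC_0+C_1\le C$ uniformly for $q$ near $2$. With $\int V u_q^2\,dx$ controlled, return to $E_{a,q}(u_q)\le C_0$: we have $\tfrac12\|\nabla u_q\|_2^2=E_{a,q}(u_q)-\tfrac12\int V u_q^2\,dx+\tfrac{a}{q+2}\int|u_q|^{q+2}\,dx$, and the Gagliardo–Nirenberg inequality \eqref{eq:2.4} with $\|u_q\|_2=1$ gives $\tfrac{a}{q+2}\int|u_q|^{q+2}\,dx\le \tfrac{a}{2a_q^*}\|\nabla u_q\|_2^q$. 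Since $q<2+\varepsilon_0$ and (by Lemma~\ref{lem:5.1}) $a_q^*\to a^*>a$, for $q$ close enough to $2$ the exponent $q$ is only slightly above $2$ and $a/(2a_q^*)<1/2-\eta$ for some fixed $\eta>0$; a standard Young-type absorption then gives $\tfrac12\|\nabla u_q\|_2^2-\tfrac{a}{2a_q^*}\|\nabla u_q\|_2^q\ge c\|\nabla u_q\|_2^2-C'$, whence $\|\nabla u_q\|_2^2\le C$. Thus $\|u_q\|_{\mathcal{H}}^2=\|\nabla u_q\|_2^2+\int V u_q^2\,dx+\|u_q\|_2^2\le C$.

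\textbf{Main obstacle.} The delicate point is Step~2's absorption: near $q=2$ the ``bad'' term $\|\nabla u_q\|_2^q$ has exponent exceeding $2$, so absorbing it into $\tfrac12\|\nabla u_q\|_2^2$ is not automatic for large gradient norm. What saves us is that the coefficient $\tfrac{a}{2a_q^*}$ is strictly less than $\tfrac12$ with a gap, uniformly for $q$ near $2$ (because $a<a^*$ and $a_q^*\to a^*$): writing $s=\|\nabla u_q\|_2^2$, the function $\tfrac12 s-\tfrac{a}{2a_q^*}s^{q/2}$ is exactly the $g(s)$ analyzed before \eqref{eq:2.8}, whose maximum value $\tfrac{q-2}{2q}\tau_q^2$ is attained only at $s=\tau_q^2$ and is $+\infty$-order — but here we already \emph{know} $u_q$ minimizes, so it is the argument of Step~2 (energy $\le C_0$ fixed, not $\sim\tau_q^2$) that pins $s$ down. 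Concretely, $g(s)\ge\tfrac12 E_{a,q}(u_q)-\tfrac12\int V u_q^2\,dx\cdot 0\ldots$ — one must track that the minimal energy $m_{\tau_q^2}$ is itself bounded above by the fixed constant $C_0$ (Step~1), which forces $g(\|\nabla u_q\|_2^2)$ into the bounded regime $s\le C$ rather than near the far side $s\approx \tau_q^2$ where $g$ would be huge. Making this dichotomy precise (using that $g$ is increasing on $(0,\tau_q^2)$ so the sublevel set $\{g\le C_0\}\cap(0,\tau_q^2)$ is a bounded interval with bound independent of $q$) is the crux; everything else is routine.
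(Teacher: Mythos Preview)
Your approach is correct and considerably more direct than the paper's. The paper argues by contradiction: assuming $\|u_{q_k}\|_{\mathcal{H}}\to\infty$, it first extracts from \eqref{eq:2.21} the bounds $\int V u_{q_k}^2\leq C$ and $\|\nabla u_{q_k}\|_2^2\leq C/(q_k-2)$, then performs a full blow-up analysis --- rescaling by $\eta_{q_k}=\|\nabla u_{q_k}\|_2^{-1}$, invoking Lions' vanishing lemma to locate concentration points, checking boundedness of the translation centers, passing to a limit equation $-\Delta f+f=af^3$ --- and finally derives a contradiction with $a<a^*$ via the sharp Gagliardo--Nirenberg constant. You sidestep all of this machinery by exploiting the structure of $g(s)=\tfrac12 s-\tfrac{a}{2a_q^*}s^{q/2}$: since $u_q$ is an \emph{interior} minimizer in $A_{\tau_q^2}$, one has $s:=\|\nabla u_q\|_2^2<\tau_q^2$, and $g$ is increasing on $(0,\tau_q^2)$; combined with $g(s)\leq E_{a,q}|_{V=0}(u_q)\leq E_{a,q}(u_q)\leq C_0$, this forces $s$ below the unique $s_0(q)\in(0,\tau_q^2)$ with $g(s_0)=C_0$, which stays uniformly bounded because $g(M)\to\tfrac12 M(1-a/a^*)>C_0$ for any fixed $M>2C_0/(1-a/a^*)$. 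This is the natural direct proof; the paper's blow-up route, while correct, is much heavier than the lemma requires.

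One caution: the ``standard Young-type absorption'' claimed in Step~2, namely $\tfrac12 s-\tfrac{a}{2a_q^*}s^{q/2}\geq cs-C'$, is false for $q>2$ (the left-hand side tends to $-\infty$ as $s\to\infty$, regardless of how small the coefficient $a/(2a_q^*)$ is). You recognize and repair this in your \emph{Main obstacle} paragraph via the monotonicity of $g$ and the constraint $s<\tau_q^2$; delete the absorption sentence and promote the monotonicity argument to its place. Note also that the Pohozaev-based bound on $\int V u_q^2$ is not needed to run the $g$-argument (only $V\geq 0$ enters there); once $\|\nabla u_q\|_2^2\leq M$ is established, the potential bound follows directly from $\tfrac12\int Vu_q^2 = E_{a,q}(u_q)-\tfrac12\|\nabla u_q\|_2^2+\tfrac{a}{q+2}\int|u_q|^{q+2}\leq C_0+\tfrac{a}{2a_q^*}M^{q/2}$.
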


\begin{proof} We argue indirectly. Suppose $\{u_q\}$ is not uniformly bounded,  there would exist $\{q_k\}$ with $q_k\to 2_+$ as $k\to \infty$ such that
\begin{equation}\label{3.29a1}
\lim_{k\to\infty}\|u_{q_k}\|_{\mathcal{H}}^2=+\infty.
\end{equation}

Let $\varphi\in C_c^\infty(\mathbb{R}^2)$ be such that
\begin{equation*}
0\leq \varphi\leq 1 \ \ {\rm and} \ \ \|\varphi\|_2^2=1.
\end{equation*}
Since ${\tau_q^2}$ given in \eqref{eq:1.11} tends to infinity as $q\to 2_+$,  we have $\varphi\in A_{{\tau_q^2}}$, for $q>2$ close to 2, and
\begin{equation}\label{eq:2.24}
E_{a,q}(u_q)=\inf_{u\in A_{{\tau_q^2}}}E_{a,q}(u_q)
\leq E_{a,q}(\varphi)
\leq \frac{1}{2}\int_{\mathbb{R}^2}(|\nabla \varphi|^2+V|\varphi|^2)\,dx.
\end{equation}

By  Lemma \ref{lem:5.3},  $Q_q(u_q)=0$, we find from \eqref{eq:2.21} and \eqref{eq:2.24} that
\begin{equation*}
\begin{split}
&\frac{q_k-2}{2}\int_{\mathbb{R}^2}|\nabla u_{q_k}|^2\,dx+\frac{1}{2}\int_{\mathbb{R}^2}Vu_{q_k}^2\,dx+\frac{1}{2}\int_{\mathbb{R}^2}[(q-1)V+x\cdot \nabla V]u_{q_k}^2\,dx\\
&=q_kE_{a,q_k}\leq \frac{3}{2}\int_{\mathbb{R}^2}(|\nabla \varphi|^2+V\varphi^2)\,dx.
\end{split}
\end{equation*}
The assumption $(V2)$ implies
\begin{equation}\label{3.29b}
\int_{\mathbb{R}^2}Vu_{q_k}^2\,dx\leq C,
\end{equation}
and
\begin{equation}\label{3.29c}
\int_{\mathbb{R}^2}|\nabla u_{q_k}|^2\,dx\leq \frac{C}{q_k-2}.
\end{equation}
By \eqref{3.29a1} and \eqref{3.29b}, we have
\begin{equation}\label{3.29d}
\lim_{k\to \infty}\|\nabla u_{q_k}\|_2^2=+\infty.
\end{equation}
Let
\begin{equation}\label{3.29e}
\eta_{q_k}=\|\nabla u_{q_k}\|_2^{-1}.
\end{equation}
and
\begin{equation}\label{3.29f}
\tilde{f}_{q_k}(x)=\eta_{q_k}u_{q_k}(\eta_{q_k}x).
\end{equation}
Then
\begin{equation}\label{3.29g}
\|\nabla \tilde{f}_{q_k}\|_2^2=\| \tilde{f}_{q_k}\|_2^2=1.
\end{equation}
By $(V2)$ and \eqref{3.29b}, we have
\begin{equation}\label{3.29h}
\bigg|\int_{\mathbb{R}^2}x\cdot\nabla Vu_{q_k}^2\,dx\bigg|\leq \int_{\mathbb{R}^2}|x\cdot\nabla V|u_{q_k}^2\,dx\leq \int_{\mathbb{R}^2}C(V+1)u_{q_k}^2\,dx\leq C,
\end{equation}
and  Lemma \ref{lem:5.3} implies
\begin{equation}\label{3.41a}
\int_{\mathbb{R}^2}|\nabla u_{q_k}|^2\,dx-\frac{1}{2}\int_{\mathbb{R}^2}x\cdot\nabla V|u_{q_k}|^2\,dx=\frac{aq_k}{q_k+2}\int_{\mathbb{R}^2}|u_{q_k}|^{q_k+2}\,dx.
\end{equation}
This with \eqref{3.29d} and \eqref{3.29h} yields that
\begin{equation}\label{3.29i}
\lim_{k\to \infty}\frac{\int_{\mathbb{R}^2}|\nabla u_{q_k}|^2\,dx}{a\int_{\mathbb{R}^2}|u_{q_k}|^{q_k+2}\,dx}=\frac{1}{2}.
\end{equation}
By \eqref{3.29d}, for $k$ large enough, $\|\nabla u_{q_k}\|_2^2\geq 1$, we deduce from \eqref{3.29c} and \eqref{3.29e} that
\[
1\geq \eta_{q_k}^{q_k-2}=\|\nabla u_{q_k}\|_2^{2-q_k}\geq \big(\frac{C}{q_k-2}\big)^{\frac{2-q_k}{2}}\to 1\ \ {\rm as} \ \ k\to \infty.
\]
Hence
\begin{equation}\label{3.29j}
\eta_{q_k}^{q_k-2}\to 1\ \ {\rm as} \ \ k\to \infty.
\end{equation}
It follows from \eqref{3.29e}, \eqref{3.29i} and \eqref{3.29j} that there exist $C_1>0$ and $C_2>0$ such that
\begin{equation}\label{3.29jj}
C_1\leq \int_{\mathbb{R}^2}|\tilde{f}_{q_k}|^{{q_k}+2}\,dx=\eta_{q_k}^{{q_k}-2}
\frac{\int_{\mathbb{R}^2}|u_{q_k}|^{{q_k}+2}\,dx}{\int_{\mathbb{R}^2}|\nabla u_{q_k}|^2\,dx}
\leq C_2.
\end{equation}
We claim from \eqref{3.29g} and \eqref{3.29jj} that, there exist $\{y_{q_k}\}\subset\mathbb{R}^2$, $R_0>0$ and $\eta>0$ such that
\begin{equation}\label{3.29kkkk}
\liminf_{k\to\infty}\int_{B_{R_0}(y_{q_k})}|\tilde{f}_{q_k}|^2\,dx\geq \eta.
\end{equation}
Indeed, if it is not the case, for any $R>0$, there would exist a sequence, still denoted by  $\{\tilde{f}_{q_k}\}$ such that
\[
\lim_{k\to\infty} \sup_{y\in \mathbb{R}^2}\int_{B_{R}(y)}|\tilde{f}_{q_k}|^2\,dx=0.
\]
By the vanishing lemma, see for instance Lemma 1.21 in \cite{Wi}, we have $\tilde{f}_{q_k}\to 0$ strongly in $L^\gamma(\mathbb{R}^2)$ for any $\gamma>2$, which contradicts \eqref{3.29jj}.

Denote $f_{q_k}=\tilde{f}_{q_k}(x+y_{q_k})=\eta_{q_k}u_{q_k}(\eta_{q_k}(x+y_{q_k}))$. We have
\begin{equation}\label{3.29jjh}
\begin{split}
&\|\nabla f_{q_k}\|_2^2=\| f_{q_k}\|_2^2=1;\\
&C_1\leq \int_{\mathbb{R}^2}|f_{q_k}|^{{q_k}+2}\,dx\leq C_2;\\
&\liminf_{k\to\infty}\int_{B_{R_0}(0)}|f_{q_k}|^2\,dx\geq \eta.
\end{split}
\end{equation}
Hence, there exist a sequence $\{q_k\}$ and $f\in H^1(\mathbb{R}^2)$ such that $f_{q_k}\rightharpoonup f$ weakly in $H^1(\mathbb{R}^2)$ and $f_{q_k}\to f$ strongly in
$L_{loc}^\gamma(\mathbb{R}^2)$ for any $\gamma\geq 2$. Noting that by  \eqref{3.29jjh}, $f\neq0$. Since $u_{q_k}$ solves \eqref{eq:1.6c} for the Lagrange multiplier $\mu_{q_k}$, we see that $f_{q_k}$ solves
\begin{equation}\label{3.29hhh}
\begin{split}
-\Delta f_{q_k}+\eta_{q_k}^2V(\eta_{q_k}(x+y_{q_k}))f_{q_k}
=\eta_{q_k}^2\mu_{q_k}f_{q_k}+\eta_{q_k}^{2-q}af_{q_k}^{{q_k}+1}.
\end{split}
\end{equation}
Next, we prove that $\{\eta_{q_k}y_{q_k}\}$ is uniformly bounded. If it is not the case, there would exist a subsequence of $\{q_k\}$, still denoted by $\{q_k\}$, such that $\lim_{k\to\infty}|\eta_{q_k}y_{q_k}|=+\infty$, then by \eqref{3.29jjh}, we have
\begin{equation*}
\begin{split}
\int_{\mathbb{R}^2}V(x)u_{q_k}^2\,dx=\int_{\mathbb{R}^2}V(\eta_{q_k}(x+y_{q_k}))f_{q_k}^2(x)\,dx\geq \int_{B_{R_0}(0)}V(\eta_{q_k}(x+y_{q_k}))f_{q_k}^2(x)\,dx\to \infty,
\end{split}
\end{equation*}
which is a contradiction to \eqref{3.29b}.
By \eqref{eq:5.14}, \eqref{3.29d}, \eqref{3.29e}, \eqref{3.29b} and \eqref{3.29i}, we get
\begin{equation}\label{3.29k}
\begin{split}
\lim_{k\to\infty}\eta_{q_k}^2\mu_{q_k}&=\lim_{k\to\infty}\frac{\mu_{q_k}}{\|\nabla u_{q_k}\|_2^2}\\
&=1+\lim_{k\to\infty}\frac{\int_{\mathbb{R}^2}V(x)u_{q_k}^2\,dx}{\|\nabla u_{q_k}\|_2^2}-\lim_{k\to\infty}\frac{a\int_{\mathbb{R}^2}|u_{q_k}|^{q_k+2}\,dx}{\|\nabla u_{q_k}\|_2^2}\\
&=-1.
\end{split}
\end{equation}
By \eqref{3.29d}, \eqref{3.29e}, \eqref{3.29j} and \eqref{3.29k}, letting $k\to\infty$ in \eqref{3.29hhh}, we have that $f\neq0$ solves
\[
-\Delta f+f=af^3.
\]
Using Lemma \ref{lem:5.3} with $V=1$, we have
\[
\int_{\mathbb{R}^2}|\nabla f|^2\,dx=\frac{a}{2}\int_{\mathbb{R}^2}f^4\,dx.
\]
By the Gagliardo-Nirenberg inequality (see \cite{W}) and $\|f\|_2^2\leq  1$, we obtain
\begin{equation*}
\begin{split}
\int_{\mathbb{R}^2}f^4\,dx\leq \frac{2}{a^*}\int_{\mathbb{R}^2}|\nabla f|^2\,dx\int_{\mathbb{R}^2}f^2\,dx=\frac{a}{a^*}\int_{\mathbb{R}^2}f^4\,dx\int_{\mathbb{R}^2}f^2\,dx\leq\frac{a}{a^*}\int_{\mathbb{R}^2}f^4\,dx,
\end{split}
\end{equation*}
which is a contradiction since $a<a^*$ and $f\neq0$. The assertion follows.

\end{proof}

Now, we are in position to prove Theorem \ref{thm2}.
\bigskip

{\bf Proof of Theorem \ref{thm2}.}  We first prove $(i)$.

By Lemma \ref{lem:1.1a}, there exists $u_0\in \mathcal{H}$ such that $u_q\rightharpoonup u_0$ weakly in $\mathcal{H}$, and Lemma \ref{lem:2.1} implies that $u_q\rightarrow u_0$ in $L^p(\mathbb{R}^2)$ for any $p\geq 2$.
Now we prove $u_q\to u_0$ in $\mathcal{H}$ as $q\to 2$.
Since $u_q$ satisfies
\begin{equation}\label{eq:2.25}
-\Delta u_q+V(x)u_q=\mu_qu_q+a(u_q)^{q+1},
\end{equation}
where
\begin{equation}\label{eq:2.25a}
\mu_q=\int_{\mathbb{R}^2}\big(|\nabla u_q|^2+V(x)|u_q|^2\big)\,dx-a\int_{\mathbb{R}^2}|u_q|^{q+2}\,dx,
\end{equation}
we see that $\mu_q$ is uniformly bounded in $q$.  Suppose  $\mu_q\rightarrow \mu_0$ as $q\to 2_+$, then $u_0$ satisfies
\begin{equation}\label{eq:2.26}
-\Delta u_0+V(x)u_0=\mu_0 u_0+au_0^{3}
\end{equation}
implying
\[
\int_{\mathbb{R}^2}|\nabla u_0|^2+V(x)|u_0|^2 \,dx=\mu_0 +a\int_{\mathbb{R}^2}|u_0|^4\,dx.
\]
We deduce from the Br\'{e}zis-Lieb lemma and
\[
\int_{\mathbb{R}^2}|\nabla u_q|^2+V(x)|u_q|^2 \,dx=\mu_q+a\int_{\mathbb{R}^2}|u_q|^{q+2}\,dx
\]
that
\begin{equation*}
\begin{split}
&\int_{\mathbb{R}^2}|\nabla u_q-\nabla u_0|^2\,dx+\int_{\mathbb{R}^2}V(x)|u_q-u_0|^2\,dx\\
&=\mu_q-\mu_0+a\int_{\mathbb{R}^2}|u_q-u_0|^{q+2}\,dx+a\int_{\mathbb{R}^2}|u_0|^{q+2}\,dx
-a\int_{\mathbb{R}^2}|u_0|^4\,dx+o(1),
\end{split}
\end{equation*}
as  $q\to 2_+$. This implies  $u_q\rightarrow u_0$ in $\mathcal{H}$.

We claim that  $u_0$  is a minimizer of
\[
d_a(2)=\inf_{u\in \mathcal{H},\|u\|_2 =1}E_{a,2}(u).
\]
In fact, suppose on the contrary that
$$
E_{a,2}(u_0)>d_a(2)=E_{a,2}(w),
$$
where $w$ is a minimizer of $d_a(2)$. By the convergence of $u_q\to u_0$ in $\mathcal{H}$,  for $q$ close to $2$ there exists  $\varepsilon_0\in (0,\frac{1}{3}\big(E_{a,2}(u_0)-E_{a,2}(w)\big))$   such that
\[
E_{a,2}(w)\geq E_{a,q}(w)-\varepsilon_0,\quad E_{a,2}(u_0)\leq E_{a,q}(u_q)+\varepsilon_0
\]
and
\begin{equation}\label{eq:2.27}
\|\nabla u_0\|_2^2+\|\nabla w\|_2^2\leq {\tau_q^2}.
\end{equation}
As a result,
  \begin{equation}\label{eq:2.28}
  E_{a,q}(u_q)\geq E_{a,2}(u_0)-\varepsilon_0>E_{a,2}(w)+\varepsilon_0\geq E_{a,q}(w).
  \end{equation}
It yields
  \[
  E_{a,q}(u_q)>E_{a,q}(w)\geq \inf_{u\in A_{{\tau_q^2}}}E_{a,q}(u),
  \]
which is a contradiction to the fact $E_{a,q}(u_q)= \inf_{u\in A_{{\tau_q^2}}}E_{a,q}(u)$.

\bigskip

Now we prove $(ii)$, that is, $\{v_q\}$ is unbounded in $H^1(\mathbb{R}^2)$.  Suppose by the contradiction that $\{v_q\}$ is bounded in $H^1(\mathbb{R}^2)$, then by the Sobolev embedding theorem, $\{v_q\}$ is bounded in $L^{q+2}(\mathbb{R}^2)$.
Noting equation \eqref{eq:2.9} implies that
 $$
 c_q=E_{a,q}(v_q)\geq\inf_{u\in \partial A_{{\tau_q^2}}}E_{a,q}|_{V=0}(u)\geq \frac{q-2}{2q}{\tau_q^2}\rightarrow+\infty
 $$
 as $q\to 2_+$, we infer that
\[
\int_{\mathbb{R}^2}V(x)|v_q|^2\,dx\rightarrow +\infty
\]
as $ q\to 2_+$. This and \eqref{eq:2.25a} yield $\mu_q\rightarrow\infty$, as $q\to 2_+$. However, this is impossible. In fact,  since $-\Delta +V$ is a compact operator,
so it has a discrete spectrum,  and the first eigenvalue $\lambda_1$  and corresponding eigenfunction $\varphi_1$ are positive, which satisfy
\[
-\Delta \varphi_1+V\varphi_1=\lambda_1 \varphi_1.
\]
Hence, we obtain
  \begin{equation}\label{eq:2.29}
  \begin{split}
  \int_{\mathbb{R}^2}\mu_qv_q\varphi_1\,dx&=\int_{\mathbb{R}^2}(-\Delta v_q+Vv_q-av_q^{q+1})\varphi_1\,dx\\
  &< \int_{\mathbb{R}^2}(-\Delta \varphi_1+V\varphi_1)v_q\,dx\\
  &=\int_{\mathbb{R}^2}\lambda_1\varphi_1 v_q\,dx.
  \end{split}
  \end{equation}
That is
\begin{equation}\label{lg1}
\mu_q< \lambda_1,
\end{equation}
which is a contradiction. $\Box$

\bigskip

Finally, we show that the assumption $(V_e)$  implies $(V_1)$ and $(V_2)$, that is, the result of Corollary \ref{cor1}. Therefore, the consequences in Theorem \ref{thm1} hold true if we assume $(V_e)$.

\bigskip

{\bf Proof of Corollary \ref{cor1}} We will verify that $V$ satisfies condition $(V_1)$ and $(V_2)$. Obviously, $(V_1)$ holds true. So we need only to verify $(V_2)$.
It holds that
\[
x\cdot \nabla V=\sum_{i=1}^np_iV+\sum_{i=1}^np_ix_i\prod_{m\neq i}|x-x_m|^{p_m}|x-x_i|^{p_i-2}(x-x_i).
\]
Let
\begin{equation}\label{eq:3.3a}
f_i=p_ix_i\prod_{m\neq i}|x-x_m|^{p_m}|x-x_i|^{p_i-2}(x-x_i).
\end{equation}
Choose $\delta_i>0$ such that
$$
\sum_{i=1}^n|p_ix_i\delta_i|<\frac p2.
$$
If $|x-x_i|\geq \frac{1}{\delta_i}$, we have $|f_i|\leq |p_ix_i\delta_i|V$; and if $|x-x_i|\leq \frac{1}{\delta_i}$, then $|f_i|\leq C_i$ for some $C_i>0$. Hence,
there exists $C>0$ such that
\[
|x\cdot \nabla V(x)|\leq C (V(x)+1).
\]
and
\begin{equation}\label{eq:3.3aa}
x\cdot \nabla V\geq \sum_{i=1}^np_iV-\sum_{i=1}^n(|p_ix_i\delta_i|V+C_i)\geq \frac p2 V-\sum_{i=1}^n C_i.
\end{equation}
Thus,  $(V_2)$ immediately follows. $\Box$

\bigskip

\section{Energy estimates}

\bigskip

It is known from Theorem \ref{thm2} that the local minimizer $u_q$ of $E_{a,q}$ tends to a minimizer $u_0\in H^1(\mathbb{R}^2)$ of $d_a(2)$.
Let $v_q$ be the mountain pass point of $E_{a,q}$ obtained by Corollary \ref{cor1}.  In this section,  we focus on investigating the asymptotic behavior of energy $E_{a,q}(v_q)$ as $q\to2_+$.
We suppose in this and next sections that $V$ satisfies condition $(V_e)$.

\begin{Proposition}\label{lem:3.2} There holds
\[
E_{a,q}(v_q)=\frac{q-2}{2q}\Big(\frac{2a_q^*}{qa}\Big)^{\frac{2}{q-2}}+o(1)
\]
as $q\rightarrow 2_+$.
\end{Proposition}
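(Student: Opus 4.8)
The plan is to prove matching upper and lower bounds for $c_q = E_{a,q}(v_q)$, where the dominant term is $\theta\tau_q^2 = \frac{q-2}{2q}\tau_q^2$ with $\tau_q^2 = \bigl(\tfrac{2a_q^*}{qa}\bigr)^{2/(q-2)}$.

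\emph{Lower bound.} First I would establish $E_{a,q}(v_q) \ge \frac{q-2}{2q}\tau_q^2 + o(\tau_q^2)$, or better the form with $o(1)$ absolute error. Since $c_q$ is the mountain pass level over paths in $\Gamma_q$ joining $\varphi^{t_0}$ to $\varphi^{t_1}$, and since along any such path the $L^2$-norm is fixed at $1$, I would use the Gagliardo--Nirenberg-based estimate \eqref{eq:2.5}: for any $u \in S(1)$, $E_{a,q}(u) \ge E_{a,q}|_{V=0}(u) \ge g(\|\nabla u\|_2^2)$ where $g(s) = \tfrac12 s - \tfrac{a}{2a_q^*}s^{q/2}$. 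Because $\|\nabla\varphi^{t_0}\|_2^2 = \tfrac14\theta\tau_q^2 < \tau_q^2$ while $\|\nabla\varphi^{t_1}\|_2^2 \to \infty$ faster (by the choice \eqref{t1} of $t_1$), every path in $\Gamma_q$ must cross the sphere $\partial A_{\tau_q^2}$; at the crossing point the energy is at least $g(\tau_q^2) = \frac{q-2}{2q}\tau_q^2$. Hence $c_q \ge \frac{q-2}{2q}\tau_q^2$. The refinement to get the error down to $o(1)$ rather than merely $o(\tau_q^2)$ will likely require more care, possibly using the Pohozaev-type information $Q_q(v_q)=0$ together with \eqref{eq:2.21} and $(V_2)$ to control the lower-order contributions of $\int V v_q^2$; I expect this to be folded into the constant-tracking. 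Actually, since the dominant term already blows up, the cleanest statement may be $c_q = \frac{q-2}{2q}\tau_q^2(1+o(1))$, but to get the stated $+o(1)$ I would bound $\int V v_q^2$ using Theorem~\ref{thm2}-type arguments — though $v_q$ blows up, so this needs the rescaling; this is where the delicacy mentioned in the introduction enters.

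\emph{Upper bound.} This is the heart of the argument and, I expect, the main obstacle. I must construct an explicit path $g \in \Gamma_q$ along which $\max_t E_{a,q}(g(t)) \le \frac{q-2}{2q}\tau_q^2 + o(1)$. The natural candidate is built from the optimizer of the Gagliardo--Nirenberg inequality, namely the ground state $\varphi_q$ of \eqref{eq:1.9}, suitably scaled and normalized in $L^2$; using the dilations $H(u,s)$, or equivalently $u \mapsto u^t = t u(tx)$, one moves from small $\|\nabla \cdot\|_2$ (near $\varphi^{t_0}$) through the scale $t \sim \tau_q$ (where $E_{a,q}$ is near its max) out to large $t$ (near $\varphi^{t_1}$ where $E_{a,q}<0$). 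Along the dilation branch, $E_{a,q}(\varphi_q^t) = \tfrac12 t^2\|\nabla\varphi_q\|_2^2 + \tfrac12\int V(x/t)\varphi_q^2\,dx - \tfrac{a t^q}{q+2}\int|\varphi_q|^{q+2}\,dx$. Using the Pohozaev identities \eqref{eq:3.3} for $\varphi_q$ — which give $\|\nabla\varphi_q\|_2^2 = \tfrac{2}{q+2}\int|\varphi_q|^{q+2}\,dx$ — the kinetic plus nonlinear terms combine to $\tfrac12 t^2\|\nabla\varphi_q\|_2^2 - \tfrac{a t^q}{2}\cdot\tfrac{\|\nabla\varphi_q\|_2^2}{\|\varphi_q\|_2^2}\cdot\|\varphi_q\|_2^2$... more precisely, after normalizing $\tilde\varphi_q = \varphi_q/\|\varphi_q\|_2$ so that $\tilde\varphi_q \in S(1)$, the $V=0$ part of the energy of $\tilde\varphi_q^t$ is exactly $g\bigl(t^2\|\nabla\tilde\varphi_q\|_2^2\bigr)$ because $\tilde\varphi_q$ achieves equality in \eqref{eq:2.4}; its maximum over $t$ is precisely $g(\tau_q^2) = \frac{q-2}{2q}\tau_q^2$, attained at the scale where $t^2\|\nabla\tilde\varphi_q\|_2^2 = \tau_q^2$. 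So the main term matches exactly. The work is then: (a) the potential term $\tfrac12\int V(t^{-1}x)\tilde\varphi_q^2\,dx$ must be shown to be $o(1)$ along the whole path — but here $t$ ranges up to $t_1 = C 2^{1/(q-2)^2}$, which is \emph{much larger} than $\tau_q \sim 2^{1/(q-2)}$, so for $t$ huge $\int V(t^{-1}x)\tilde\varphi_q^2 \to V(0) = 0$ under $(V_e)$, good; but one must check uniformity and that the exponential decay \eqref{eq:5.1} of $\varphi_q$ (uniform in $q$, from Lemma~\ref{lem:5.1}) makes $\int V(t^{-1}x)\tilde\varphi_q^2\,dx$ small \emph{uniformly} for $t \ge$ some slowly growing threshold, while for $t$ in the bounded-by-$\tau_q$ range one estimates $V(x/t) \le C|x|^{p}/t^{p}$ (for $V=|x|^p$; sum over the $x_i$ in general) times $\tilde\varphi_q^2$, giving $O(\tau_q^{-p}) = o(1)$; (b) near the endpoint $t_0$ one interpolates within $S(1)$ from $\varphi^{t_0}$ (the fixed test function) to $\tilde\varphi_q^{t_0}$ keeping energy below $\tfrac18\theta\tau_q^2 + o(1)$, using that both have small kinetic energy $\sim \tfrac14\theta\tau_q^2$; (c) similarly near $t_1$, stay in the negative-energy region.

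\emph{Obstacle and organization.} The genuine difficulty — flagged by the authors as ``the proof of Theorem~\ref{thm3} is delicate'' — is controlling the potential energy $\int V(x/t)\varphi_q^2\,dx$ \emph{uniformly along the path} when $t$ must travel all the way to $t_1 \sim 2^{1/(q-2)^2} \gg \tau_q$; the reason $t_1$ is taken that large (rather than $O(\tau_q)$) is precisely to have enough room for the energy to come back down below zero while the potential term stays negligible, and verifying this requires the uniform exponential bound \eqref{eq:5.1} on $\varphi_q$ from Lemma~\ref{lem:5.1}. So I would organize the proof as: (1) cite the lower bound $c_q \ge g(\tau_q^2) = \frac{q-2}{2q}\tau_q^2$ from \eqref{eq:2.9}, upgraded to $+o(1)$ error via the rescaling analysis of $v_q$ (anticipating Theorem~\ref{thm3}); (2) define the competitor path in three pieces (two short interpolation segments near the endpoints within $S(1)$, one long dilation segment $t \in [t_0, t_1]$ applied to the normalized ground state $\tilde\varphi_q$); (3) on the dilation segment split $t \in [t_0, K]$ and $t \in [K, t_1]$ for a suitable $K = K(q) \to \infty$ slowly, bounding the potential term by $O(K^{-p}\tau_q^{?})$... in fact by $o(1)$ on each, using \eqref{eq:5.1} and Lemma~\ref{lem:5.1}'s convergence $a_q^* \to a^*$; (4) conclude $c_q \le \max_t g(t^2\|\nabla\tilde\varphi_q\|_2^2) + o(1) = \frac{q-2}{2q}\tau_q^2 + o(1)$, and combine with (1). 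I would expect step (3) to consume most of the writing and to be where all the hypotheses $(V_e)$, the uniform decay, and the precise choice \eqref{t1} of $t_1$ are genuinely used.
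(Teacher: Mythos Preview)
Your architecture matches the paper's: the lower bound comes straight from \eqref{eq:2.9}, and the upper bound is obtained by a three-piece path whose central segment dilates the normalized Gagliardo--Nirenberg optimizer $\varphi_q/\|\varphi_q\|_2$, flanked by interpolation arcs to the fixed endpoints. One quick correction: the lower bound needs no ``upgrade''. The inequality $c_q\ge\frac{q-2}{2q}\tau_q^2$ from \eqref{eq:2.9} is exact, and combined with an upper bound $c_q\le\frac{q-2}{2q}\tau_q^2+o(1)$ it already gives the proposition. Invoking a rescaling analysis of $v_q$ here would in fact be circular, since Proposition~\ref{lem:3.2} is an \emph{input} to the later blow-up arguments.

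There is, however, a genuine gap in your upper bound. Your claim ``$V(0)=0$ under $(V_e)$'' is false: $(V_e)$ gives $V(x)=\prod_i|x-x_i|^{p_i}$, which vanishes only at the $x_i$. If you dilate $\tilde\varphi_q$ about the origin, the potential contribution $\int V(x/t)\tilde\varphi_q^2\,dx\to V(0)$ is in general a nonzero constant, so you obtain only $c_q\le\frac{q-2}{2q}\tau_q^2+O(1)$, which does not give the stated $o(1)$. The paper repairs this by centering the profile at a zero $x_0$ of $V$, setting $w_q(x)=\tau_q\varphi_q(\tau_q(x-x_0))/\|\varphi_q\|_2$; see \eqref{4.1a}. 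You also mislocate the delicacy: the paper does not need uniform potential control over the whole range---on the side arcs $g_1,g_2$ the potential term may be as large as $O((q-2)^{-p/2})$, still dwarfed by $\theta\tau_q^2$. The hard work is on the dilation arc $g_3$: because the paper pre-scales by $\tau_q$ and then dilates by $t$, the profile is centered at $x_0/t$, which drifts away from $x_0$; to force the potential term to be $o(1)$ at the actual maximizer $t_{1,q}$ one must prove $t_{1,q}\to1$ and then the sharper estimate $|1-t_{1,q}|\le\tau_q^{-3/2}$, cf.\ \eqref{eq:3.34}--\eqref{eq:3.40}. Interestingly, if after fixing the centering you parametrize the dilation so that the center stays pinned at $x_0$ (i.e.\ use $t\,\tilde\varphi_q(t(\cdot-x_0))$), that drift disappears and the potential term is $O(t^{-p_{i_0}})=o(1)$ uniformly for $t\ge t_0$, which would bypass the paper's fine maximizer analysis; so your instinct toward uniform bounds is salvageable, but only once the test profile is placed at a zero of $V$.
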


\bigskip

\begin{proof}
It follows from \eqref{eq:2.9} and the definition of $c_q$ that
 \begin{equation*}
c_q=E_{a,q}(v_q)\geq \inf_{u\in \partial A_{\tau_q^2}} \tilde E_{a,q}(v_q)\geq \frac{q-2}{2q}\Big(\frac{2a_q^*}{qa}\Big)^{\frac{2}{q-2}}.
\end{equation*}

 Now, we prove
  \begin{equation*}
 E_{a,q}(v_q)\leq \frac{q-2}{2q}\Big(\frac{2a_q^*}{qa}\Big)^{\frac{2}{q-2}}+o(1)
 \end{equation*}
 as  $q\to 2_+.$ To this purpose, we will construct a path $g$ in $\Gamma_q$ defined in \eqref{eq:2.17} connecting $\varphi^{t_0}$ and $\varphi^{t_1}$ so that
 \[
 E_{a,q}(v_q)\leq \max_{t\geq 0}E_{a,q}(g(t))\leq \frac{q-2}{2q}\Big(\frac{2a_q^*}{qa}\Big)^{\frac{2}{q-2}}+o(1).
 \]

The path $g$ is constructed in three parts. First,  we construct a path $g_1$ connecting $\varphi^{t_0}$ to some $w_q^{\tilde{t}_0}$, and estimate $E_{a,q}(g_1(s))$.

 Let $\lambda_i\in(0,\infty]$ be given by
\[
\lambda_i=\lim_{x\rightarrow x_i}\frac{V(x)}{|x-x_i|^{p_{max}}}.
\]
 Define $\lambda=\min\{\lambda_1,\cdot\cdot\cdot, \lambda_n\}$, and denote $\mathcal{Z}:=\{x_i|\lambda_i=\lambda\}$ and  $p = \sum_{i=1}^n p_i$.
  Let
  \begin{equation}\label{4.1a}
  w_q(x)=\frac{\tau_q\varphi_q(\tau_q(x-x_0))}{\|\varphi_q\|_2},
  \end{equation}
  where $\varphi_q$ is the unique positive solution of \eqref{eq:1.9} and $x_0\in \mathcal{Z}$. Then,
\begin{equation}\label{4.1b}
\int_{\mathbb{R}^2}|w_q|^2\,dx=1.
\end{equation}
By \eqref{eq:3.3} and \eqref{eq:1.6a}, we deduce that
\begin{equation}\label{eq:3.13}
\int_{\mathbb{R}^2}|\nabla w_q|^2\,dx=\tau_q^2\frac{\int_{\mathbb{R}^2}|\nabla \varphi_q|^2\,dx}{\int_{\mathbb{R}^2}|\varphi_q|^2\,dx}=\tau_q^2,
\end{equation}
and
\begin{equation}\label{eq:3.14}
\int_{\mathbb{R}^2}|w_q|^{q+2}\,dx=\tau_q^q\frac{\int_{\mathbb{R}^2}|\varphi_q|^{q+2}\,dx}{\|\varphi_q\|_2^{q+2}}=\frac{q+2}{2a_q^*}\tau_q^q.
\end{equation}
Let
\begin{equation}\label{4.2a}
\tilde t_0^2=\frac{1}{6}\theta:=\frac{1}{6}\frac{q-2}{2q}
\end{equation}
and
\begin{equation}\label{4.2b}
w_q^{\tilde{t}_0}(x)=\tilde{t}_0w_q(\tilde{t}_0x).
\end{equation}
By \eqref{eq:3.13} we have
\begin{equation}\label{eq:3.15}
\int_{\mathbb{R}^2}|\nabla w_q^{\tilde{t}_0}|^2\,dx=\tilde t_0^2\int_{\mathbb{R}^2}|\nabla w_q|^2\,dx=\frac{1}{6}\theta \tau_q^2.
\end{equation}
Define a path $g_1$ connecting $(w_q)^{\tilde{t_0}}$ and $\varphi^{t_0}$ as follows.
\begin{equation*}
g_1(s):=\frac{s w_q^{\tilde{t}_0}+(1-s)\varphi^{t_0}}{\|s w_q^{\tilde{t}_0}+(1-s)\varphi^{t_0}\|_2}, \ \, s\in [0,1].
\end{equation*}
We may verify that
\begin{equation}\label{eq:3.16}
\begin{split}
\|s w_q^{\tilde{t}_0}+(1-s)\varphi^{t_0}\|_2^2&\geq s^2\| w_q^{\tilde{t}_0}\|_2^2+(1-s)^2\|\varphi^{t_0}\|_2^2\\
&=s^2+(1-s)^2\geq \frac{1}{2}.
\end{split}
\end{equation}
By \eqref{eq:2.11} and \eqref{eq:3.15}, we have
\begin{equation}\label{eq:3.17}
\begin{split}
&\|s\nabla  w_q^{\tilde{t}_0}+(1-s)\nabla \varphi^{t_0}\|_2^2\\
\leq &2(s^2\|\nabla  w_q^{\tilde{t}_0}\|_2^2+(1-s)^2\|\nabla \varphi^{t_0}\|_2^2)\\
\leq & 2(\|\nabla  w_q^{\tilde{t}_0}\|_2^2+\|\nabla \varphi^{t_0}\|_2^2)\\
= & 2\big(\frac{1}{6}\theta \tau_q^2+\frac{1}{4}\theta \tau_q^2\big)=\frac{5}{6}\theta\tau_q^2.
\end{split}
\end{equation}
It follows from \eqref{eq:3.16} and \eqref{eq:3.17} that
\begin{equation}\label{eq:3.18}
E_{a,q}|_{V=0}(g_1(s))\leq \frac{1}{2}\|\nabla g_1(s)\|_2^2\leq \frac{5}{6}\theta\tau_q^2.
\end{equation}

On the other hand, by \eqref{eq:3.16},
\begin{equation}\label{eq:3.19}
\begin{split}
\int_{\mathbb{R}^2}V(x)|g_1(s)|^2\,dx&\leq 2\int_{\mathbb{R}^2}V(x)|sw_q^{\tilde{t}_0}+(1-s)\varphi^{t_0}|^2\,dx\\
&\leq 4\Big(\int_{\mathbb{R}^2}V(x)|w_q^{\tilde{t}_0}|^2\,dx+\int_{\mathbb{R}^2}V(x)|\varphi^{t_0}|^2\,dx\Big).
\end{split}
\end{equation}
Recall
\[
V\bigg(\frac{x}{t}\bigg)=\bigg|\frac{x}{t}-x_1\bigg|^{p_1}\bigg|\frac{x}{t}-x_2\bigg|^{p_2}\cdot\cdot\cdot\bigg|\frac{x}{t}-x_n\bigg|^{p_n},
\]
we claim that
\begin{equation}\label{eq:3.20}
V\bigg(\frac{x}{t}\bigg)\leq Ct^{-\sum_{i=1}^n p_i}|x|^{\sum_{i=1}^n p_i}+C = C t^{-p}|x|^p +C,
\end{equation}
where $C>0$ is independent of $q$.
Indeed, if $\big|\frac{x}{t}\big|\geq \max_i|x_i|$, then
$$
\big|\frac{x}{t}-x_i\big|\leq \big|\frac{x}{t}\big|+|x_i|\leq 2\big|\frac{x}{t}\big|
$$
implies
$$
V\bigg(\frac{x}{t}\bigg)\leq 2^p\big|\frac{x}{t}\big|^{p}.
$$
If $\big|\frac{x}{t}\big|\leq \max_i|x_i|$, we have
\[
\big|\frac{x}{t}-x_i\big|\leq \big|\frac{x}{t}\big|+|x_i|\leq 2\max_i|x_i|
\]
 and then
 \[
 V\bigg(\frac{x}{t}\bigg)\leq \big(2\max|x_i|\big)^{p}.
 \]
Consequently,
\[
V\big(\frac{x}{t}\big)\leq 2^{p}\big|\frac{x}{t}\big|^{p}+\big(2\max|x_i|\big)^{p},
\]
that is, the claim is valid.
By \eqref{4.1b} and \eqref{eq:3.20}, we deduce
\begin{equation}\label{eq:3.21}
\begin{split}
&\int_{\mathbb{R}^2}V(x)\big|w_q^{\tilde{t}_0}\big|^2\,dx\\
&=\int_{\mathbb{R}^2}V\big(\frac{x}{\tilde{t}_0}\big)|w_q|^2\,dx\\
&\leq C\int_{\mathbb{R}^2}\tilde{t}_0^{-p}|x|^{p}|w_q|^2\,dx+\tilde{C}\\
&=\frac{\tilde{t_0}^{-p}}{\|\varphi_q\|_2^2}\int_{\mathbb{R}^2}|\frac{x}{\tau_q}+x_0|^p\varphi_q^2(x)\,dx+\tilde{C}.
\end{split}
\end{equation}

By \eqref{eq:5.1}  and the  Lebesgue dominated theorem, we get
\begin{equation}\label{eq:3.22}
\int_{\mathbb{R}^2}\big|\frac{x}{\tau_q}+x_0\big|^{p}\frac{|\varphi_q(x)|^2}{\|\varphi_q\|_2^2}\,dx\rightarrow |x_0|^p
\end{equation}
since $\tau_q\to\infty$ as $q\to 2_+$.
It follows from \eqref{eq:3.21} and \eqref{eq:3.22} that for $q\to 2_+$,
\begin{equation}\label{eq:3.23}
\int_{\mathbb{R}^2}V(x)|w_q^{\tilde{t}_0}|^2\,dx\leq O(\tilde{t}_0^{-p}).
\end{equation}

By \eqref{eq:2.12}, \eqref{eq:3.19} and  \eqref{eq:3.23}, we have
\begin{equation}\label{eq:3.24}
\int_{\mathbb{R}^2}V(x)|g_1(s)|^2\,dx\leq O(\tilde{t}_0^{-p})+8V(0).
\end{equation}
Taking into account  \eqref{4.2a}, \eqref{eq:3.18} and \eqref{eq:3.24}, we obtain
\begin{equation}\label{eq:3.25}
\begin{split}
E_{a,q}(g_1(s))&\leq \frac{5}{6}\theta \tau_q^2+O(\tilde{t}_0^{-p})+8V(0)\\
&=\frac{5}{6}\frac{q-2}{2q}\Big(\frac{2a_q^*}{qa}\Big)^{\frac{2}{q-2}}+O((q-2)^{-\frac{p}{2}})+8V(0).\\
\end{split}
\end{equation}

Next, we construct the second part of the path $g$.
Let
\begin{equation}\label{4.13a}
\tilde{t}_1=\tau_q^{-1}t_1
\end{equation}
and
\begin{equation}\label{4.13b}
w_q^{\tilde{t}_1}=\tilde{t}_1w_q(\tilde{t}_1x),
\end{equation}
where $t_1$ is defined in \eqref{t1} and $w_q$ is defined in \eqref{4.1a}. We define a path $g_2$ connecting $w_q^{\tilde{t}_1}$ and $\varphi^{t_1}$ as follows.
\begin{equation*}
g_2(s)=\frac{sw_q^{\tilde{t}_1}+(1-s)\varphi^{t_1}}{\|sw_q^{\tilde{t}_1}+(1-s)\varphi^{t_1}\|_2},\ \ s\in [0,1].
\end{equation*}

By \eqref{eq:3.13} and \eqref{eq:3.16}, we get
\begin{equation*}
\begin{split}
\|\nabla g_2(s)\|_2^2&\leq 2\|s\nabla w_q^{\tilde{t}_1}+(1-s)\nabla \varphi^{t_1}\|_2 \\
&\leq 4(s^2\|\nabla w_q^{\tilde{t}_1} \|_2^2+(1-s)^2\|\nabla\varphi^{t_1}\|_2^2\\
&=4(s^2\tilde{t}_1^2\tau_q^2+(1-s)^2t_1^2\|\nabla \varphi\|_2^2).
\end{split}
\end{equation*}
Since $\|w_q^{\tilde{t}_1}\|_2^2=\|\varphi^{t_1}\|_2^2=1$, we have
\begin{equation*}
\|sw_q^{\tilde{t}_1}+(1-s)\varphi^{t_1}\|_2^2\leq 2[\|sw_q^{\tilde{t}_1}\|_2^2+\|(1-s)\varphi^{t_1}\|_2^2]\leq  4.
\end{equation*}
This   with \eqref{eq:3.14} yields that
\begin{equation}\label{eq:3.26}
\begin{split}
\int_{\mathbb{R}^2}|g_2(s)|^{q+2}\,dx&\geq 2^{-(q+2)}\int_{\mathbb{R}^2}|sw_q^{\tilde{t}_1}+(1-s)\varphi^{t_1}|^{q+2}\,dx\\
&\geq 2^{-q-3}\Big(s^{q+2}\int_{\mathbb{R}^2}|w_q^{\tilde{t}_1}|^{q+2}\,dx+(1-s)^{q+2}\int_{\mathbb{R}^2}|\varphi^{t_1}|^{q+2}\,dx\Big)\\
&= 2^{-q-3}\bigg(\frac{q+2}{2a_q^*}s^{q+2}\tilde{t}_1^q\tau_q^q+(1-s)^{q+2}t_1^q\int_{\mathbb{R}^2}|\varphi|^{q+2}\,dx\bigg).
\end{split}
\end{equation}
Therefore,
\begin{equation*}
\begin{split}
E_{a,q}|_{V=0}(g_2(s))&\leq 2(s^2\tilde{t}_1^2\tau_q^2+(1-s)^2t_1^2\|\nabla \varphi\|_2^2)\\
&-\frac{a}{2^{q+3}(q+2)}\bigg(\frac{q+2}{2a_q^*}s^{q+2}\tilde{t}_1^q\tau_q^q+(1-s)^{q+2}t_1^q\int_{\mathbb{R}^2}|\varphi|^{q+2}\,dx\bigg).\\
\end{split}
\end{equation*}
If $0<s\leq \frac{1}{2}$,  we have
\begin{equation*}
\begin{split}
E_{a,q}|_{V=0}(g_2(s))&\leq 2\tilde{t}_1^2\tau_q^2+2t_1^2\|\nabla \varphi\|_2^2-\frac{a}{2^{q+3}(q+2)}(1-s)^{q+2}t_1^q\int_{\mathbb{R}^2}|\varphi|^{q+2}\,dx\\
&\leq2t_1^2+2t_1^2\|\nabla \varphi\|_2^2-\frac{a}{2^{2q+5}(q+2)}t_1^q\int_{\mathbb{R}^2}|\varphi|^{q+2}\,dx.\\
\end{split}
\end{equation*}
Since $t_1^q/t_1^2=2^{\frac{1}{q-2}}C^{q-2}\to \infty$ as $q\to 2+$, by \eqref{t1} we find for
$q$ close to $2$ that
\[
E_{a,q}|_{V=0}(g_2(s))
\leq -\frac{aC}{2^{2q+6}q+2}2^{\frac{q}{(q-2)^2}}\int_{\mathbb{R}^2}|\varphi|^{q+2}\,dx.
\]
If $\frac{1}{2}\leq s\leq 1$, we deduce  in the same way that for $q$ close to $2$
\begin{equation*}
\begin{split}
E_{a,q}|_{V=0}(g_2(s))&\leq 2(s^2\tilde{t}_1^2\tau_q^2+(1-s)^2t_1^2\|\nabla \varphi\|_2^2)-\frac{a}{2^{q+3}(q+2)}\frac{q+2}{2a_q^*}s^{q+2}\tilde{t}_1^q\tau_q^q\\
&\leq 2(t_1^2+t_1^2\|\nabla \varphi\|_2^2)-\frac{a}{2^{2q+5}(q+2)}\frac{q+2}{2a_q^*}t_1^q\\
&\leq-\frac{a}{2^{2q+6}(q+2)}\frac{q+2}{2a_q^*}t_1^q.
\end{split}
\end{equation*}
Thus,
\begin{equation}\label{eq:3.27}
E_{a,q}|_{V=0}(g_2(s))\leq -Ct_1^q=-2^{\frac{q-1}{(q-2)^2}}C.
\end{equation}
We may prove as \eqref{eq:3.23} that
\begin{equation}\label{eq:3.28}
\int_{\mathbb{R}^2}V(x)|g_2(s)|^2\,dx\leq C.
\end{equation}
Consequently,
\begin{equation}\label{eq:3.29}
E_{a,q}(g_2(s))<0
\end{equation}
for $q>2$ and close to $2$.

Finally, we construct a path linking $w_q^{\tilde{t}_0}$ and $w_q^{\tilde{t}_1}$.

Since \eqref{eq:3.15} and $\|\nabla w_q^{\tilde{t}_1}\|_2^2=2^{\frac{2}{(q-2)^2}}C$, there exists a $\tilde{t}_2\in (\tilde{t}_0, \tilde{t}_1)$ such that  $\|\nabla w_q^{\tilde{t}_2}\|=\tau_q^2$. By \eqref{eq:2.9}, we have
\begin{equation}\label{eq:3.30}
E_{a,q}(w_q^{\tilde{t}_2})\geq E_{a,q}|_{V=0}(w_q^{\tilde{t}_2})\geq \theta \tau_q^2.
\end{equation}
Define a path linking $w_q^{\tilde{t}_0}$ and $w_q^{\tilde{t}_1}$ as follows.
\[
g_3(s)=w_q^{s\tilde{t}_0+(1-s)\tilde{t}_1},\ \ s\in [0,1].
\]

Now, we define a path $g$ in $\Gamma_q$ defined in \eqref{eq:2.16} by $g_1,\,g_2$ and $g_3$. Precisely, we define $g(s)=g_1(3s)$ if $0\leq s\leq \frac 13$; $g(s) = g_3(3s-1)$ if $\frac 13\leq s\leq \frac 23$, and $g(s)=g_2(3s-2)$ if $\frac 23\leq s\leq 1$.
Then, $g(s)\in \Gamma_q$.

By \eqref{eq:3.25}, \eqref{eq:3.29} and \eqref{eq:3.30}, we have
\begin{equation}\label{eq:3.31}
\max_{\{0\leq s\leq 1\}} E_{a,q}(g(s))=\max_{\{\frac 13\leq s\leq \frac 23\}} E_{a,q}(g_3(3s-1))=\max_{\{\tilde{t}_0\leq t\leq \tilde{t}_1\}}E_{a,q}(w_q^t).
\end{equation}
Apparently, by \eqref{eq:3.13} and \eqref{eq:3.14}, there exists $t_{1,q}\in (\tilde{t}_0, \tilde{t}_1)$ such that
\begin{equation}\label{eq:3.32}
E_{a,q}(w_q^{t_{1,q}})=\max_{\{\tilde{t}_0\leq t\leq \tilde{t}_1\}}E_{a,q}(w_q^t)=:\max_{\{\tilde{t}_0\leq t\leq \tilde{t}_1\}} f(t)
\end{equation}
with $f'(t_{1,q})=0$, where
\[
f(t)=\frac{1}{2}t^2\tau_q^2-\frac{1}{q}t^q\tau_q^2+\frac{1}{2}\int_{\mathbb{R}^2}V\bigg(\frac{x}{t}\bigg)|w_q|^2\,dx.
\]
and
\[
f'(t)=t\tau_q^2-t^{q-1}\tau_q^2-\frac{1}{2t^2}\int_{\mathbb{R}^2}x\cdot\nabla V\bigg(\frac{x}{t}\bigg)|w_q|^2\,dx.
\]
Similar to the proof of \eqref{eq:3.20}, we have
\[
\Big|\nabla V\bigg(\frac{x}{t}\bigg)\Big|=\sum_{1}^np_i\Big|\frac{x}{t}-x_i\Big|^{p_i-1}\Pi_{m\neq i}\Big|\frac{x}{t}-x_m\Big|^{p_m}\leq Ct^{-p+1}|x|^{p-1}+C.
\]
Therefore,
\begin{equation*}
\begin{split}
&\Big|\int_{\mathbb{R}^2}x\cdot \nabla V\bigg(\frac{x}{t}\bigg)|w_q|^2\,dx\Big|\\
&\leq \int_{\mathbb{R}^2}Ct^{-p+1}|x|^p|w_q|^2\,dx
+\int_{\mathbb{R}^2}C|x||w_q|^2\,dx\\
&=\int_{\mathbb{R}^2}Ct^{-p+1}|x|^{p}\tau_q^2|\varphi_q(\tau_q(x-x_0))|^2\,dx+\int_{\mathbb{R}^2}C|x|\tau_q^2|\varphi_q(\tau_q(x-x_0))|^2\,dx.
\end{split}
\end{equation*}
By the change of variables, we estimate
\begin{equation*}
\begin{split}
&\int_{\mathbb{R}^2}t^{-p+1}|x|^{p}\tau_q^2|\varphi_q(\tau_q(x-x_0))|^2\,dx\\
&\leq C\int_{\mathbb{R}^2}t^{-p+1}|x|^{p}\tau_q^{-p}|\varphi_q(x)|^2\,dx+Ct^{-p+1}|x_0|^p\int_{\mathbb{R}^2}|\varphi_q|^2\,dx.\\
\end{split}
\end{equation*}
We remark that the integral $\int_{\mathbb{R}^2}|x|^{p}|\varphi_q(x)|^2\,dx$ is finite because $\varphi_q$ exponentially decays at infinity uniformly in $q$. Therefore,
\[
\int_{\mathbb{R}^2}t^{-p+1}|x|^{p}\tau_q^2|\varphi_q(\tau_q(x-x_0))|^2\,dx \leq Ct^{-p+1}\tau_q^{-p}+Ct^{-p+1}\leq Ct^{-p+1}.
\]
Similarly, we have
\[
\int_{\mathbb{R}^2}|x|\tau_q^2|\varphi_q(\tau_q(x-x_0))|^2\,dx \leq C(\tau_q^{-1}+|x_0|)\leq C.
\]
Hence, if $t\geq \tilde{t}_0=\frac{q-2}{12q}$, we have
\begin{equation}\label{eq:3.33}
\frac 1{2t^{2}}\Big|\int_{\mathbb{R}^2}x\cdot \nabla V\bigg(\frac{x}{t}\bigg)|w_q|^2\,dx\Big|\leq Ct^{-p-1}+Ct^{-2}\leq
 C(\frac{q-2}{12q})^{\min\{-p-1,-2\}}.
\end{equation}
This allows us to infer from $f'(t_{1,q})=0$ that
\begin{equation}\label{eq:3.34}
|t_{1,q}\tau_q^2-t_{1,q}^{q-1}\tau_q^2|\leq C(\frac{q-2}{12q})^{\min\{-p-1,-2\}}.
\end{equation}

We claim that  $t_{1,q}\rightarrow 1$ as $q\to2_+$. Indeed, were it not the case, there would exist $\varepsilon_0>0$ small and $q_n\to2_+$ such that either $t_{1,q_n}\geq1+\varepsilon_0$ or $t_{1,q_n}\leq 1-\varepsilon_0$.

If $t_{1,q_n}\geq1+\varepsilon_0$, noting
\[
\lim_{t\to 0_+}\frac{1-(1 +\varepsilon_0)^t}t=-\ln(1 +\varepsilon_0),
\]
we obtain
\begin{equation*}
\begin{split}
&t_{1,q_n}\tau_{q_n}^2-t_{1,q_n}^{q_n-1}\tau_{q_n}^2\\
\leq& t_{1,q_n}\tau_{q_n}^2\big[1-(1+\varepsilon_0)^{q_n-2}\big]\\
\leq& -\frac 12t_{1,q_n}\tau_{q_n}^2(q_n-2)\ln(1+\varepsilon_0)\\
\leq& -\frac 12(1+\varepsilon_0)\ln(1+\varepsilon_0)(q_n-2)\tau_{q_n}^2.
\end{split}
\end{equation*}
This contradicts \eqref{eq:3.34}
since
\begin{equation*}
\lim_{n\to\infty}\frac{(q_n-2)\tau_{q_n}^2}{(\frac{q_n-2}{12q_n})^{\min\{-p-1,-2\}}}=+\infty.
\end{equation*}

Observe $\tilde t_1> t_{1,q}> \tilde t_0 = \frac 16\frac {q-2}{2q}$, if $t_{1,q_n}\leq1-\varepsilon_0$, using the fact that
\[
\lim_{t\to 0_+}\frac{1-(1 -\varepsilon_0)^t}t=-\ln(1 -\varepsilon_0),
\]
we find
\begin{equation*}
\begin{split}
&t_{1,q_n}\tau_{q_n}^2-t_{1,q_n}^{q_n-1}\tau_{q_n}^2\\
&\geq t_{1,q_n}\tau_{q_n}^2\big[1-(1-\varepsilon_0)^{q_n-2}\big]\\
&\geq -\frac 12t_{1,q_n}\tau_{q_n}^2(q_n-2)\ln(1-\varepsilon_0)\\
&\geq -\frac{1}{12}\frac{q_n-2}{2q_n}\tau_{q_n}^2(q_n-2)\ln(1-\varepsilon_0)\\
&=-\frac{(q_n-2)^2}{24q_n}\ln(1-\varepsilon_0)\tau_{q_n}^2\to +\infty,
\end{split}
\end{equation*}
which contradicts \eqref{eq:3.34}. Consequently, $t_{1,q}\rightarrow1$ if $q\to 2$.

Next, we prove further that
\begin{equation}\label{eq:3.37}
|1-t_{1,q}|\leq \tau_q^{-\frac{3}{2}}.
\end{equation}

Since $t_{1,q}\rightarrow1$ as $q\to 2$, we show as \eqref{eq:3.33} that
\[
\frac 1{2t_{1,q}^{2}}\Big|\int_{\mathbb{R}^2}x\cdot \nabla V\bigg(\frac{x}{t_{1,q}}\bigg)|w_q|^2\,dx\Big|\leq Ct_{1,q}^{-p-1}+Ct_{1,q}^{-2}\leq C.
\]
This together with  $f'(t_{1,q})=0$ yields
\[
|t_{1,q}\tau_q^2-t_{1,q}^{q-1}\tau_q^2|\leq C,
\]
that is,
\begin{equation}\label{eq:3.35}
|(1-t_{1,q}^{q-2})\tau_q^2|\leq C.
\end{equation}
By \eqref{eq:2.11}, \eqref{eq:3.13} and \eqref{eq:3.14}, we have
\begin{equation}\label{eq:3.36}
E_{a,q}|_{V=0}(w_1^{t_{1,q}})
=\frac{1}{2}t_{1,q}^2\tau_q^2-\frac{1}{q}t_{1,q}^q\tau_q^2\leq \frac{q-2}{2q}\tau_{q}^2.
\end{equation}

Now we are ready to prove \eqref{eq:3.37}. Suppose on the contrary that \eqref{eq:3.37} does not hold, there would exist $t_{1,q_n}\to 2_+$ such that
either $t_{1,q_n}\leq 1-\tau_{q_n}^{-\frac{3}{2}}$ or $t_{1,q_n}\geq 1+\tau_{q_n}^{-\frac{3}{2}}$.

If $t_{1,q_n}\leq 1-\tau_{q_n}^{-\frac{3}{2}}$, then
\begin{equation}\label{eq:3.38}
|(1-t_{1,q_n}^{q_n-2})\tau_{q_n}^2|=(1-t_{1,q_n}^{q_n-2})\tau_{q_n}^2\geq \big(1-(1-\tau_{q_n}^{-\frac{3}{2}})^{q_n-2}\big)\tau_{q_n}^2.
\end{equation}
The limit
\[
\lim_{n\to\infty}\frac{(q_n-2)\ln(1-\tau_{q_n}^{-\frac{3}{2}})}{\ln(1-\frac{1}{2}(q_n-2)\tau_{q_n}^{-\frac{3}{2}})} =2
\]
implies
\[
(q_n-2)\ln(1-\tau_{q_n}^{-\frac{3}{2}})\leq \ln(1-\frac{1}{2}(q_n-2)\tau_{q_n}^{-\frac{3}{2}})
\]
for $n$ large, namely,
\[
(1-\tau_{q_n}^{-\frac{3}{2}})^{q_n-2}\leq 1-\frac{1}{2}(q_n-2)\tau_{q_n}^{-\frac{3}{2}}.
\]
Hence, inequality \eqref{eq:3.38} yields that
\begin{equation*}
|(1-t_{1,q_n}^{q_n-2})\tau_{q_n}^2|\geq [1-(1-2^{-1}(q_n-2)\tau_{q_n}^{-\frac{3}{2}})]\tau_{q_n}^2=2^{-1}(q_n-2)\tau_{q_n}^{\frac{1}{2}}\rightarrow \infty,
\end{equation*}
which is a contradiction to \eqref{eq:3.35}. Similarly, the case $t_{q_n}\geq 1+\tau_{q_n}^{-\frac{3}{2}}$ can also be ruled out.

Therefore, \eqref{eq:3.37} holds true, which implies
\begin{equation}\label{eq:3.39}
\Big|\frac{x_0\tau_q}{t_{1,q}}-\tau_qx_0\Big|=|\tau_qx_0|\frac{|1-t_{1,q}|}{t_{1,q}}\leq C\tau_q^{-\frac{1}{2}}\rightarrow 0.
\end{equation}
We then  deduce by Lemma \ref{lem:5.1} that
 \begin{equation}\label{eq:3.40}
 \begin{split}
 &\int_{\mathbb{R}^2}V(x)|w_q^{t_{1,q}}|^2\,dx\\
 &=\int_{\mathbb{R}^2}V(\frac{x}{t_{1,q}\tau_q}+\frac{x_0}{t_{1,q}})\frac{|\varphi_q(x)|^2}{\|\varphi_q\|_2^2}\,dx\\
 &=\tau_q^{-p_{max}}\int_{\mathbb{R}^2}\frac{V\Big(\frac{x}{t_{1,q}\tau_q}+\frac{x_0}{t_{1,q}}\Big)}{\Big|\frac{x}{t_{1,q}\tau_q}+\frac{x_0}{t_{1,q}}-x_0\Big|^{p_{max}}}
 \Big|\frac{x}{t_{1,q}}+\frac{x_0\tau_q}{t_{1,q}}-\tau_qx_0\Big|^{p_{max}}\frac{|\varphi_q(x)|^2}{\|\varphi_q\|_2^2}\,dx\\
 &=\tau_q^{-p_{max}}\|Q\|_2^{-2}\Big(\lambda\int_{\mathbb{R}^2}|x|^{p_{max}}|Q(x)|^2\,dx+o(1)\Big)\\
 &=\Big[\lambda e^{\frac{p_{max}}{2}}\|Q\|_2^{-2}\int_{\mathbb{R}^2}|x|^{p_{max}}|Q(x)|^2\,dx+o(1)\Big]\bigg(\frac{a_q^*}{a}\bigg)^{-\frac{p_{max}}{q-2}}
 \end{split}
 \end{equation}
 as $q\to 2$. We conclude by \eqref{eq:3.31}, \eqref{eq:3.32}, \eqref{eq:3.36} and \eqref{eq:3.40} that
\begin{equation}\label{eq:3.41}
 \max_{0\leq s\leq 1} E_{a,q}(g(s))\leq  \frac{q-2}{2q}\tau_{q}^2+\frac{1}{2}\Big[\lambda e^{\frac{p_{max}}{2}}\|Q\|_2^{-2}\int_{\mathbb{R}^2}|x|^{p_{max}}|Q(x)|^2\,dx+o(1)\Big]\bigg(\frac{a_q^*}{a}\bigg)^{-\frac{p_{max}}{q-2}}.
\end{equation}
By the definition of $c_q$,
 \[
 E_{a,q}(v_q)=c_q\leq \max_{0\leq s\leq 1} E_{a,q}(g(s))=\frac{q-2}{2q}\tau_{q}^2+o(1)
 \]
 as $q\to 2+$. This ends the proof.
\end{proof}

\bigskip

\begin{Remark}\label{remark:3.1}
Let
\begin{equation}\label{cql}
\tilde{c}_q=\frac{q-2}{2q}\tau_{q}^2.
\end{equation}
It follows from \eqref{eq:3.41} and the definition of $c_q$ that
\begin{equation*}
\limsup_{q\to2_+}\frac{c_q-\tilde{c}_q}{\big(\frac{a_q^*}{a}\big)^{-\frac{p_{max}}{q-2}}}\leq \frac{1}{2}\lambda e^{\frac{p_{max}}{2}}\int_{\mathbb{R}^2}\frac{|x|^{p_{max}}|Q(x)|^2}{\|Q\|_2^2}\,dx.
\end{equation*}
\end{Remark}

\bigskip

Now, we estimate the gradient of $v_q$.
\begin{Proposition}\label{Pro4.2} There exists a positive constant $C>0$ such that
\begin{equation}\label{Pro4.2-0}
C(q-2)\tau_q^2\leq \int_{\mathbb{R}^2}|\nabla v_q|^2\,dx\leq \tau_q^2+\frac{C}{q-2},
\end{equation}
as $q\rightarrow 2_+$.
\end{Proposition}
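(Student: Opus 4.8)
The plan is to combine the energy estimate from Proposition \ref{lem:3.2} with the Pohozaev-type identities available for $v_q$. Since $v_q$ solves \eqref{eq:2.23} with Lagrange multiplier $\mu_q$, Lemma \ref{lem:5.3} gives
\[
\int_{\mathbb{R}^2}|\nabla v_q|^2\,dx-\frac 12\int_{\mathbb{R}^2}x\cdot\nabla V\,|v_q|^2\,dx=\frac{qa}{q+2}\int_{\mathbb{R}^2}|v_q|^{q+2}\,dx,
\]
i.e. $Q_q(v_q)=0$. Feeding this into the algebraic identity \eqref{eq:2.21} with $u=v_q$ yields
\[
\frac{q-2}{2}\int_{\mathbb{R}^2}|\nabla v_q|^2\,dx+\frac 12\int_{\mathbb{R}^2}(qV+x\cdot\nabla V)\,v_q^2\,dx=q\,E_{a,q}(v_q).
\]
By hypothesis $(V_e)$ and \eqref{eq:3.3aa} we have $qV+x\cdot\nabla V\ge (q-1)V+x\cdot\nabla V\ge \tfrac p2 V-\sum_i C_i\ge -C$ (since $V\ge 0$), so the second term is bounded below by a constant independent of $q$. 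Together with the upper bound $E_{a,q}(v_q)=c_q\le \tilde c_q+o(1)=\frac{q-2}{2q}\tau_q^2+o(1)$ from Proposition \ref{lem:3.2}, this gives
\[
\frac{q-2}{2}\int_{\mathbb{R}^2}|\nabla v_q|^2\,dx\le q\,E_{a,q}(v_q)+C\le \frac{q-2}{2}\tau_q^2+C,
\]
whence $\int_{\mathbb{R}^2}|\nabla v_q|^2\,dx\le \tau_q^2+\frac{C}{q-2}$, which is the upper bound in \eqref{Pro4.2-0}.

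For the lower bound, I would argue by contradiction using the Gagliardo–Nirenberg estimate. From \eqref{eq:2.5} and the lower bound $c_q\ge \frac{q-2}{2q}\tau_q^2$ established in the proof of Proposition \ref{lem:3.2}, together with the upper bound just obtained and the nonnegativity of $\int V v_q^2$, one controls $E_{a,q}(v_q)$ from above and below by quantities comparable to $(q-2)\tau_q^2$. Writing $G(v_q):=\int_{\mathbb{R}^2}|\nabla v_q|^2\,dx$ and using \eqref{eq:2.5},
\[
\frac{q-2}{2q}\tau_q^2\le c_q=E_{a,q}(v_q)\le \tfrac12 G(v_q)+\tfrac12\!\int_{\mathbb{R}^2}\!V v_q^2\,dx-\frac{a}{q+2}\!\int_{\mathbb{R}^2}\!|v_q|^{q+2}\,dx,
\]
but more usefully, comparing with the function $g(s)=\frac12 s-\frac{a}{2a_q^*}s^{q/2}$ (which is the sharp lower envelope of $E_{a,q}|_{V=0}$) shows that if $G(v_q)$ were of order $o((q-2)\tau_q^2/\tau_q^2)\cdot\tau_q^2$, i.e. far smaller than $\tau_q^2$, then $g(G(v_q))$ together with the $\int V v_q^2\ge 0$ term could not reach the level $\frac{q-2}{2q}\tau_q^2$, because on $(0,\tau_q^2)$ the function $g$ is increasing with $g(\tau_q^2)=\frac{q-2}{2q}\tau_q^2$ and $g$ has a definite concavity making $g(\delta\tau_q^2)$ strictly smaller than $\delta\cdot\frac{q-2}{2q}\tau_q^2\cdot(\text{something}<1)$ for $\delta$ bounded away from $1$. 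Making this quantitative: for $s=\delta\tau_q^2$ with $0<\delta<1$ fixed, $g(\delta\tau_q^2)=\frac12\tau_q^2(\delta-\delta^{q/2}\tfrac{q}{2})\to \frac12\tau_q^2(\delta-\delta)=0$ relative to the leading order, so in fact one needs $\delta\to 1$; a more careful expansion gives $g(\delta\tau_q^2)\le C(q-2)\tau_q^2$ only when $1-\delta\le C(q-2)$, which would already force $G(v_q)\ge(1-C(q-2))\tau_q^2\ge C(q-2)\tau_q^2$. To avoid circularity I would instead simply invoke the constraint $v_q\in S(1)$ and the identity $E_{a,q}(v_q)+\frac{a}{q+2}\int|v_q|^{q+2}=\frac12 G(v_q)+\frac12\int V v_q^2$, combined with the upper bound $G(v_q)\le\tau_q^2+\frac{C}{q-2}$: from \eqref{eq:2.4}, $\int|v_q|^{q+2}\le\frac{q+2}{2a_q^*}G(v_q)^{q/2}$, so
\[
\frac{q-2}{2q}\tau_q^2\le c_q\le \tfrac12 G(v_q)+\tfrac12\!\int\! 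V v_q^2-\frac{a}{2a_q^*}G(v_q)^{q/2}+\text{(nothing)},
\]
and bounding $\int V v_q^2$ via \eqref{eq:2.21} and $Q_q(v_q)=0$ by $q c_q-\frac{q-2}{2}G(v_q)+C$, one gets after rearrangement a lower bound $\frac{a}{2a_q^*}G(v_q)^{q/2}\le \tfrac{q}{2}G(v_q) - \tfrac{q-2}{2q}\tau_q^2\cdot(\text{const})+C$ that is only consistent with $G(v_q)\ge C(q-2)\tau_q^2$.

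The main obstacle is precisely the lower bound $\int_{\mathbb{R}^2}|\nabla v_q|^2\,dx\ge C(q-2)\tau_q^2$: the potential term $\int V v_q^2$ is not directly controlled from above (only the combination $qV+x\cdot\nabla V$ is controlled from below), and one cannot a priori rule out that $v_q$ "spreads out" so that its gradient is much smaller than $\tau_q^2$ while its potential energy carries the mountain-pass level. The resolution is to exploit that $c_q\ge\frac{q-2}{2q}\tau_q^2\to+\infty$ is genuinely large, and that via the Gagliardo–Nirenberg bound \eqref{eq:2.4} the $V\equiv 0$ part of the energy alone cannot exceed $g(G(v_q))\le g(\tau_q^2)=\frac{q-2}{2q}\tau_q^2$ unless $G(v_q)$ itself is comparable to $\tau_q^2$ — but since we only need the weaker conclusion $G(v_q)\ge C(q-2)\tau_q^2$, a softer argument suffices: if $G(v_q)=o\big((q-2)\tau_q^2\big)$ along a sequence $q_k\to 2_+$, then $g(G(v_{q_k}))\le \tfrac12 G(v_{q_k})=o((q-2)\tau_q^2)$ while the potential contribution must then be $\ge \frac{q-2}{2q}\tau_q^2(1-o(1))$; but plugging $G(v_{q_k})\to 0$ (after the natural rescaling, or directly since $G$ small forces $\int|v_q|^{q+2}$ small by \eqref{eq:2.4}) into \eqref{eq:2.21} shows $\frac12\int(qV+x\cdot\nabla V)v_q^2 = qc_q - \frac{q-2}{2}G(v_q)\ge qc_q - o(1)$, and then one derives a contradiction with \eqref{lg1} (the bound $\mu_q<\lambda_1$), exactly as in the proof of Theorem \ref{thm2}(ii), since a large potential energy forces $\mu_q\to\infty$. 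This completes both inequalities in \eqref{Pro4.2-0}.
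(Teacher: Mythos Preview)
Your upper bound is correct and is exactly the paper's argument: combine $Q_q(v_q)=0$ with \eqref{eq:2.21}, Proposition~\ref{lem:3.2}, and the lower bound on $qV+x\cdot\nabla V$ from $(V_2)$.

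For the lower bound, your final strategy---force $\mu_q\to+\infty$ and contradict $\mu_q<\lambda_1$---is a genuinely different route from the paper's and can be made to work, but the write-up has a real gap. Under the contradiction hypothesis $G(v_{q_k})=o\big((q_k-2)\tau_{q_k}^2\big)$ the quantity $G(v_{q_k})$ is \emph{not} small in absolute terms (recall $(q-2)\tau_q^2\to+\infty$), so you cannot invoke ``$G$ small forces $\int|v_q|^{q+2}$ small'' nor argue ``exactly as in the proof of Theorem~\ref{thm2}(ii)'', where boundedness in $H^1$ was assumed. What you actually need is that Gagliardo--Nirenberg together with $\tau_q^{q-2}=\tfrac{2a_q^*}{qa}$ gives $G^{(q-2)/2}\le \tfrac{2a_q^*}{qa}$ whenever $G\le (q-2)\tau_q^2$, hence $\tfrac{a}{2a_q^*}G^{q/2}\le \tfrac{1}{q}G=o((q-2)\tau_q^2)$; this is the missing computation. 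You also silently use the \emph{upper} half of $(V_2)$, $x\cdot\nabla V\le C(V+1)$, to pass from $\int(qV+x\cdot\nabla V)v_q^2$ large to $\int V v_q^2$ large. (Your earlier attempts via the shape of $g(s)$ do not lead anywhere: $c_q$ is an energy level, not a bound on $E_{a,q}|_{V=0}(v_q)$, so comparing $g(G(v_q))$ to $\tfrac{q-2}{2q}\tau_q^2$ gives no information.)

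The paper's argument for the lower bound is shorter and avoids $\mu_q$ altogether. From \eqref{eq:2.21} and $Q_q(v_q)=0$ one gets $\int(q_kV+x\cdot\nabla V)v_{q_k}^2=(q_k-2)\tau_{q_k}^2(1+o(1))$; the two-sided bound $\tfrac{p}{2}V-C\le x\cdot\nabla V\le 2pV+C$ valid under $(V_e)$ then yields that $\int x\cdot\nabla V\,v_{q_k}^2$ is itself of exact order $(q_k-2)\tau_{q_k}^2$. Now the Pohozaev identity \eqref{eq:5.12} directly gives
\[
\int_{\mathbb{R}^2}|\nabla v_{q_k}|^2\,dx\;=\;\frac12\int_{\mathbb{R}^2} x\cdot\nabla V\,v_{q_k}^2\,dx\;+\;\frac{q_ka}{q_k+2}\int_{\mathbb{R}^2}|v_{q_k}|^{q_k+2}\,dx\;\ge\;\frac12\int_{\mathbb{R}^2} x\cdot\nabla V\,v_{q_k}^2\,dx\;\ge\;c(q_k-2)\tau_{q_k}^2,
\]
contradicting $G(v_{q_k})=o((q_k-2)\tau_{q_k}^2)$. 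This closes the argument in one line once the two-sided control of $x\cdot\nabla V$ is in hand.
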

\begin{proof}
Since there exists $\mu_q\in \mathbb{R}$ such that
\begin{equation}\label{Pro4.2-1}
-\Delta v_q+V(x)v_q=av_q^{q+1}+\mu_qv_q,
\end{equation}
and $v_q$ satisfies the Pohozaev identity in  Lemma \ref{lem:5.3}, we deduce from \eqref{eq:2.21} and Proposition \ref{lem:3.2} that
\begin{equation} \label{Pro4.2-2}
\frac{q-2}{2}\int_{\mathbb{R}^2}|\nabla v_q|^2\,dx
+\frac{1}{2}\int_{\mathbb{R}^2}(qV+x\cdot\nabla V)v_q^2\,dx=
\frac{q-2}{2}\tau_q^2+o(1)
\end{equation}
as $q\to2_+$.
The upper bound in \eqref{Pro4.2-0} is then obtained by the assumption (V2).

The lower bound in  \eqref{Pro4.2-0} is obtained indirectly. Indeed, were it not true,  there would exist $\{q_k\}$ with $q_k\to2_+$ as $k\to\infty$ such that
\begin{equation}\label{4.32a}
\int_{\mathbb{R}^2}|\nabla v_{q_k}|^2\,dx=o((q_k-2)\tau_{q_k}^2)
\end{equation}
as $k\to\infty$.
By \eqref{Pro4.2-2} and \eqref{4.32a},
we have
\[
\int_{\mathbb{R}^2}(q_kV+x\cdot\nabla V)v_{q_k}^2\,dx=O((q_k-2)\tau_{q_k}^2).
\]
The fact
\begin{equation}\label{Pro4.2-4}
\frac{p}{2}V-C_1\leq x\cdot \nabla V\leq 2pV+C_2,
\end{equation}
which can be verified as \eqref{eq:3.3aa}, implies
\begin{equation}\label{Pro4.2-5}
\frac{p}{2}\int_{\mathbb{R}^2}Vv_{q_k}^2\,dx-C_1\leq \int_{\mathbb{R}^2}x\cdot \nabla Vv_{q_k}^2\,dx\leq 2p\int_{\mathbb{R}^2}Vv_{q_k}^2\,dx+C_2.
\end{equation}
Therefore,
\begin{equation*}
\int_{\mathbb{R}^2}Vv_{q_k}^2\,dx=O((q_k-2)\tau_{q_k}^2), \ \
\int_{\mathbb{R}^2}x\cdot \nabla Vv_{q_k}^2\,dx=O((q_k-2)\tau_{q_k}^2).
\end{equation*}
Hence, the Pohozaev identity \eqref{eq:5.12} yields that
\[
\int_{\mathbb{R}^2}|\nabla v_{q_k}|^2\,dx\geq \frac{1}{2}\int_{\mathbb{R}^2}x\cdot\nabla Vv_{q_k}^2\,dx=O((q_k-2)\tau_{q_k}^2),
\]
which is a contradiction to \eqref{4.32a}.
\end{proof}

\bigskip

\section{Blow-up analysis}

\bigskip

In this section, using the blow-up argument, we study the asymptotic behavior of $v_q$ as $q\to 2_+$. This is carried through in the proof of  Theorem \ref{thm3}.

\bigskip

{\bf Proof of Theorem \ref{thm3}.} By Proposition \ref{Pro4.2}, we have either the case $(i)$
\begin{equation}\label{thm3-1}
\liminf_{q\rightarrow 2_+}\frac{\int_{\mathbb{R}^2}|\nabla v_q|^2\,dx}{(q-2)\tau_q^2}=C>0,
\end{equation}
or the case $(ii)$
\begin{equation}\label{thm3-2}
\liminf_{q\rightarrow 2_+}\frac{\int_{\mathbb{R}^2}|\nabla v_q|^2\,dx}{(q-2)\tau_q^2}=+\infty.
\end{equation}
We will treat these two cases separately.

\bigskip
In the case $(i)$,  there exists $\{q_k\}$ with $q_k\to2_+$ as $k\to\infty$ such that
\begin{equation}\label{thm3-2a}
\int_{\mathbb{R}^2}|\nabla v_{q_k}|^2\,dx=O(\lambda_{q_k}),
 \end{equation}
where and in the following we denote $\lambda_{q_k}=(q_k-2)\tau_{q_k}^2$. We note $\lambda_{q_k}\to\infty$, as $k\to\infty$. Therefore, we deduce from \eqref{Pro4.2-2}, \eqref{Pro4.2-4} and \eqref{thm3-1} that
\begin{equation}\label{thm3-3}
\int_{\mathbb{R}^2}Vv_{q_k}^2\,dx=O(\lambda_{q_k})
\end{equation}
and
\begin{equation}\label{thm3-3-0}
\int_{\mathbb{R}^2}x\cdot\nabla Vv_{q_k}^2\,dx=O(\lambda_{q_k}).
\end{equation}
By \eqref{eq:2.29} and \eqref{Pro4.2-1}, we have
\begin{equation}\label{thm3-4}
\mu_{q_k}=\int_{\mathbb{R}^2}|\nabla v_{q_k}|^2\,dx+\int_{\mathbb{R}^2}V(x)v_{q_k}^2\,dx-a\int_{\mathbb{R}^2}|v_{q_k}|^{q+2}\,dx< \lambda_1.
\end{equation}
Hence, by \eqref{thm3-3} and \eqref{thm3-4},
$$
a\int_{\mathbb{R}^2}|v_{q_k}|^{q+2}\,dx\geq \int_{\mathbb{R}^2}V(x)v_{q_k}^2\,dx-\lambda_1=O(\lambda_{q_k}).
$$
On the other hand, by the Pohozaev identity \eqref{eq:5.12}, we have
\begin{equation}\label{5.6a}
\int_{\mathbb{R}^2}|\nabla v_{q_k}|^2\,dx-\frac{1}{2}\int_{\mathbb{R}^2}x\cdot\nabla V|v_{q_k}|^2\,dx=\frac{aq_k}{q_k+2}\int_{\mathbb{R}^2}|v_{q_k}|^{q_k+2}\,dx.
\end{equation}
Thus, we obtain from \eqref{thm3-3-0}
for $k$ large enough that
$$
\frac{q_ka}{q_k+2}\int_{\mathbb{R}^2}|v_{q_k}|^{q+2}\,dx\leq \int_{\mathbb{R}^2}|\nabla v_{q_k}|^2\,dx=O(\lambda_{q_k}).
$$
Consequently,
\begin{equation}\label{thm3-5}
a\int_{\mathbb{R}^2}|v_{q_k}|^{q+2}\,dx=O(\lambda_{q_k}).
\end{equation}
By \eqref{thm3-2a}, there exist $C_1>0$ and $C_2>0$ such that
\begin{equation}\label{thm3-4ab}
C_1(q_k-2)\Big(\frac{2a_{q_k}^*}{q_ka}\Big)^{\frac{2}{q_k-2}}\leq
\int_{\mathbb{R}^2}|\nabla v_{q_k}|^2\,dx
\leq C_2(q_k-2)\Big(\frac{2a_{q_k}^*}{q_ka}\Big)^{\frac{2}{q_k-2}}.
\end{equation}
Let
\begin{equation} \label{thm3-6a}
\varepsilon_q=\|\nabla v_q\|_2^{-1}
\end{equation}
 and
\begin{equation}\label{thm3-7a}
\tilde{w}_q(x)=\varepsilon_qv_q(\varepsilon_q x).
\end{equation}
Then,
\begin{equation}\label{thm3-7}
\|\nabla\tilde{w}_q \|_2^2=\|\tilde{w}_q\|_2^2=1.
\end{equation}
It is known from  Lemma \ref{lem:5.1} that $a_q^*\to a^*$ as $q\to2_+$,  then by \eqref{thm3-4ab},
\begin{equation}\label{thm3-6bb}
\varepsilon_{q_k}^{q_k-2}=\Big(\int_{\mathbb{R}^2}|\nabla v_{q_k}|^2\,dx\Big)^{\frac{2-q_k}{2}}\rightarrow \frac{a}{a^*}.
\end{equation}
Moreover, by  \eqref{thm3-2a}, \eqref{thm3-5} , \eqref{thm3-6a} and \eqref{thm3-6bb}, there exist $C_3>0$ and $C_4>0$ such that
\begin{equation}\label{thm3-8}
C_3\leq \int_{\mathbb{R}^2}|\tilde{w}_{q_k}|^{{q_k}+2}\,dx=\varepsilon_{q_k}^{{q_k}-2}
\frac{\int_{\mathbb{R}^2}|v_{q_k}|^{{q_k}+2}\,dx}{\int_{\mathbb{R}^2}|\nabla v_{q_k}|^2\,dx}
\leq C_4.
\end{equation}
Arguing as \eqref{3.29kkkk}, we find from \eqref{thm3-7} and \eqref{thm3-8} that there exist $\{y_{q_k}\}\subset\mathbb{R}^2$, $R_0>0$ and $\eta>0$ such that
\begin{equation}\label{thm3-9}
\liminf_{k\to\infty}\int_{B_{R_0}(y_{q_k})}|\tilde{w}_{q_k}|^2\,dx\geq \eta.
\end{equation}

Denote $w_{q_k}=\tilde{w}_{q_k}(x+y_{q_k})=\varepsilon_{q_k}v_{q_k}(\varepsilon_{q_k}(x+y_{q_k}))$. We have
\begin{equation}\label{thm3-10}
\begin{split}
&\|\nabla w_{q_k}\|_2^2=\|w_{q_k}\|_2^2=1;\\
&C_3\leq \int_{\mathbb{R}^2}|w_{q_k}|^{{q_k}+2}\,dx\leq C_4;\\
&\liminf_{k\to\infty}\int_{B_{R_0}(0)}|w_{q_k}|^2\,dx\geq \eta.
\end{split}
\end{equation}
Hence, there exist a sequence $\{q_k\}$ with $q_k\to 2_+$ as $k\to\infty$ and $w\in H^1(\mathbb{R}^2)$ such that $w_{q_k}\rightharpoonup w$ weakly in $H^1(\mathbb{R}^2)$ and $w_{q_k}\to w\neq0$ strongly in
$L_{loc}^\gamma(\mathbb{R}^2)$ for any $\gamma\geq 2$.
By \eqref{Pro4.2-1}, we see that $w_{q_k}$ solves
\begin{equation}\label{thm3-11}
\begin{split}
-\Delta w_{q_k}+\varepsilon_{q_k}^2V(\varepsilon_{q_k}(x+y_{q_k}))w_{q_k}
=\varepsilon_{q_k}^2\mu_{q_k}w_{q_k}+\varepsilon_{q_k}^{2-q}aw_{q_k}^{{q_k}+1}.
\end{split}
\end{equation}

Now, we study the asymptotic behavior of $w_{q_k}$ and the limiting equation of \eqref{thm3-11}.

Let us consider the limiting behavior of  $\varepsilon_{q_k}^2\mu_{q_k}$ first. By \eqref{thm3-4}, we have
\[
\varepsilon_{q_k}^2\mu_{q_k}\leq \varepsilon_{q_k}^2\lambda_1\to0,\ \ {\rm as} \ \ q\to2.
\]
On the other hand, equations \eqref{thm3-2a}, \eqref{thm3-4}, \eqref{thm3-5} and \eqref{thm3-6a} yield
\[
\varepsilon_{q_k}^2\mu_{q_k}\geq -\varepsilon_{q_k}^2a\int_{\mathbb{R}^2}|v_{q_k}|^{q_k+2}\,dx\geq -C\varepsilon_{q_k}^2\lambda_{q_k}\geq -C.
\]
So we may assume
\begin{equation}\label{thm3-12}
\varepsilon_{q_k}^2\mu_{q_k}\to -\beta^2\leq 0.
\end{equation}

\bigskip

Next, we study the limiting behavior of  $\varepsilon_{q_k}^2V(\varepsilon_{q_k}(x+y_{q_k}))$.

If $\liminf_{k\to\infty}|\varepsilon_{q_k}y_{q_k}|\leq C$  with $C>0$ independent of $k$, then there exists $\{q_k\}$ such that $|\varepsilon_{q_k}y_{q_k}|\leq C$. So for any $M>0$, we have
\begin{equation}\label{thm3-13}
\varepsilon_{q_k}^2V(\varepsilon_{q_k}(x+y_{q_k}))\leq \varepsilon_{q_k}^2
(|\varepsilon_{q_k}x|+|\varepsilon_{q_k}y_{q_k}|+\max_i|x_i|)^p\to0
\end{equation}
uniformly for  $x\in B_M(0)$ as  $k\to \infty$.

On the other hand, if $\liminf_{k\to\infty}|\varepsilon_{q_k}y_{q_k}|=+\infty$, we may find a sequence $\{q_k\}$ with $q_k\to 2_+$ as $k\to\infty$ such that $|\varepsilon_{q_k}y_{q_k}|\to +\infty$, as $k\to\infty$. For any
$M>0$, if $|x|\leq M$ and $k$ is large enough,  we obtain
\begin{equation}\label{thm3-14}
V(\varepsilon_{q_k}(x+y_{q_k}))=\prod_{i=1}^n|\varepsilon_{q_k}x-x_i+\varepsilon_{q_k}y_{q_k}|^{p_i}
\geq \frac{1}{2^p}\prod_{i=1}^n|-x_i+\varepsilon_{q_k}y_{q_k}|^{p_i}.
\end{equation}
By \eqref{thm3-3},
\begin{equation*}
\begin{split}
\int_{B_{R_0}(0)}V(\varepsilon_{q_k}(x+y_{q_k}))w_{q_k}^2(x)\,dx&\leq \int_{\mathbb{R}^2}V(\varepsilon_{q_k}(x+y_{q_k}))w_{q_k}^2(x)\,dx\\
&=\int_{\mathbb{R}^2}V(x)v_{q_k}^2(x)\,dx\\
&\leq C\lambda_{q_k}
\end{split}
\end{equation*}
Hence,  for $k$ large enough,
\[
\int_{B_{R_0}(0)}\lambda_{q_k}^{-1}\prod_{i=1}^n|-x_i+\varepsilon_{q_k}y_{q_k}|^{p_i}w_{q_k}^2(x)\,dx\leq C.
\]
By \eqref{thm3-9},
\begin{equation}\label{thm3-15a}
\lambda_{q_k}^{-1}\prod_{i=1}^n|-x_i+\varepsilon_{q_k}y_{q_k}|^{p_i}\leq \frac{C}{\eta}.
 \end{equation}
 Note that  $\varepsilon_{q_k}^{-2}=O(\lambda_{q_k})$ via  \eqref{thm3-2a} and \eqref{thm3-6a}, we have up to a subsequence that
\begin{equation}\label{thm3-15}
\varepsilon_{q_k}^{2}\prod_{i=1}^n|-x_i+\varepsilon_{q_k}y_{q_k}|^{p_i}\to \mu\geq 0.
\end{equation}
It is readily to verify that for any $M>0$ and $x\in B_M(0)$,
\[
\lim_{k\to\infty}\varepsilon_{q_k}^{2}V(\varepsilon_{q_k}(x+y_{q_k}))
=\lim_{k\to\infty}\varepsilon_{q_k}^{2}\prod_{i=1}^n|-x_i+\varepsilon_{q_k}y_{q_k}|^{p_i}.
\]
Hence, \eqref{thm3-15} implies that either
\begin{equation}\label{thm3-16}
\varepsilon_{q_k}^{2}V(\varepsilon_{q_k}(x+y_{q_k}))\to \mu>0
\end{equation}
or
\begin{equation}\label{thm3-17}
\varepsilon_{q_k}^{2}V(\varepsilon_{q_k}(x+y_{q_k}))\to 0
\end{equation}
uniformly in  $x\in B_M(0)$.

In summary of \eqref{thm3-12},\eqref{thm3-13}, \eqref{thm3-16} and \eqref{thm3-17}, in the case $(i)$ we have  the following subcases:

Subcase $(i)$:\ \ \ \ $\varepsilon_{q_k}^2\mu_{q_k}\to 0$, \, and \,\,  \eqref{thm3-16} holds;

Subcase $(ii)$: \ $\varepsilon_{q_k}^2\mu_{q_k}\to 0$, \, and \, \, \eqref{thm3-17} holds;

Subcase $(iii)$: $\varepsilon_{q_k}^2\mu_{q_k}\to -\beta^2<0$, \, and  \,\, \eqref{thm3-16} holds;

Subcase $(iv)$: $\varepsilon_{q_k}^2\mu_{q_k}\to -\beta^2<0$, \, and  \,\, \eqref{thm3-17} holds.
\bigskip

Taking the  limit $k\to\infty$ in  \eqref{thm3-11}, we obtain that $w$ satisfies correspondingly in the subcase $(i)$ that
\begin{equation}\label{thm3-18}
-\Delta w+\mu w=a^*w^3;
\end{equation}
in the subcase $(ii)$ that
\begin{equation}\label{thm3-19}
-\Delta w=a^*w^3;
\end{equation}
in the subcase $(iii)$ that
\begin{equation}\label{thm3-20}
-\Delta w+(\mu+\beta^2) w=a^*w^3;
\end{equation}
and in the subcase $(iv)$ that
\begin{equation}\label{thm3-21}
-\Delta w+\beta^2 w=a^*w^3.
\end{equation}

In the subcase $(i)$, by the uniqueness of positive solution of \eqref{thm3-18} and $w\neq0$, there exists $y_0\in \mathbb{R}^2$ such that
$$
w=\frac{\sqrt{\mu}}{\|Q\|_2}Q(\sqrt{\mu}(x-y_0)),
$$
which implies $\|w\|_2^2=1.$
Hence, $w_{q_k}\to w$ strongly in $L^2(\mathbb{R}^2)$, that is
\begin{equation}\label{thm3-22}
\varepsilon_{q_k}v_{q_k}(\varepsilon_{q_k}(x+y_{q_k}))\to \frac{\sqrt{\mu}}{\|Q\|_2}Q(\sqrt{\mu}(x-y_0)) \ \ \ {\rm strongly \ \ in} \ \
L^2(\mathbb{R}^2).
\end{equation}

The subcase $(ii)$ can not happen. Indeed, if it would happen on the contrary,  the fact that $w\geq 0$ and $w\neq0$ would imply that \eqref{thm3-19} admits a positive solution, which contradicts the Liouville type theorem.

In the same way to drive  \eqref{thm3-22}, we can show in the subcase $(iii)$ that, there exists $y_1\in \mathbb{R}^2$ such that
\begin{equation}\label{thm3-23}
\varepsilon_{q_k}v_{q_k}(\varepsilon_{q_k}(x+y_{q_k}))\to \frac{\sqrt{\mu+\beta^2}}{\|Q\|_2}Q(\sqrt{\mu+\beta^2}(x-y_1)) \ \ \ {\rm strongly \ \ in} \ \
L^2(\mathbb{R}^2),
\end{equation}
and in the subcase $(iv)$ that, there exists $y_2\in \mathbb{R}^2$ such that
\begin{equation}\label{thm3-24}
\varepsilon_{q_k}v_{q_k}(\varepsilon_{q_k}(x+y_{q_k}))\to \frac{\beta}{\|Q\|_2}Q(\beta(x-y_2)) \ \ \ {\rm strongly \ \ in} \ \
L^2(\mathbb{R}^2).
\end{equation}
Therefore, the conclusions in Theorem \ref{thm3} are valid for the case $(i)$.

\bigskip

Now, we turn to the case $(ii)$.

By \eqref{thm3-2}, there exists $\{q_k\}$ such that
\begin{equation}\label{thm3-2aaa}
\int_{\mathbb{R}^2}|\nabla v_{q_k}|^2\,dx>>O(\lambda_{q_k}).
 \end{equation}
Then, equation \eqref{Pro4.2-2} and \eqref{Pro4.2-4} imply
\begin{equation}\label{thm3-25}
\int_{\mathbb{R}^2}Vv_{q_k}^2\,dx\leq C\lambda_{q_k},
\end{equation}
which yields via \eqref{Pro4.2-4} that
\begin{equation}\label{thm3-26}
\Big|\int_{\mathbb{R}^2}x\cdot \nabla Vv_{q_k}^2\,dx\Big|\leq C\lambda_{q_k}.
\end{equation}
By the Pohozaev identity \eqref{5.6a} and \eqref{Pro4.2-4},
\begin{equation}\label{thm3-27}
\lim_{k\rightarrow\infty}\frac{\int_{\mathbb{R}^2}|\nabla v_{q_k}|^2\,dx} { \frac{a}{2}\int_{\mathbb{R}^2}\big|v_{q_k}\big|^{q_k+2}\,dx}=1.
\end{equation}

Now, we apply the blow-up analysis for $\tilde{w}_{q_k}$, which is defined in \eqref{thm3-7a}.

By \eqref{Pro4.2-0}, \eqref{thm3-6a}and \eqref{thm3-27},
\[
\varepsilon_{q_k}^{q_k-2}=\Big(\int_{\mathbb{R}^2}|\nabla v_{q_k}|^2\,dx\Big)^{\frac{2-q}{2}}
\geq \big(2\tau_{q_k}^2)^{\frac{q-2}{2}}=2^{\frac{q-2}{2}}\frac{qa}{2a_q^*}\to \frac{a}{a^*}
\]
and
\[
\varepsilon_{q_k}^{q_k-2}\leq \big(C\lambda_{q_k}\big)^{\frac{2-q}{2}}=C^{\frac{q-2}{2}}(q-2)^{\frac{q-2}{2}}\frac{qa}{2a_q^*}
\to \frac{a}{a^*}.
\]
Thus, we have
\begin{equation}\label{thm3-28ab}
\varepsilon_{q_k}^{q_k-2}\to \frac{a}{a^*}.
\end{equation}
By \eqref{thm3-27},  there exist $C_1>0$ and $C_2>0$ such that
 \begin{equation}\label{thm3-28}
 C_1\leq \int_{\mathbb{R}^2}|\tilde{w}_{q_k}|^{{q_k}+2}\,dx=\varepsilon_{q_k}^{{q_k}-2}
\frac{\int_{\mathbb{R}^2}|v_{q_k}|^{{q_k}+2}\,dx}{\int_{\mathbb{R}^2}|\nabla v_{q_k}|^2\,dx}
\leq C_2.
 \end{equation}
 Proceeding as the case $(i)$, there exists $y_{q_k}\in \mathbb{R}^2$ such that  \eqref{thm3-9} holds. Let
 $$
 w_{q_k}=\tilde{w}_{q_k}(x+y_{q_k})=\varepsilon_{q_k}v_q(\varepsilon_{q_k}(x+y_{q_k})).
 $$
 Hence, there is a subsequent of $\{w_{q_k}\}$, still denoted by $\{w_{q_k}\}$, such that $w_{q_k}\rightharpoonup w\neq0$ weakly in $H^1(\mathbb{R}^2)$ and
 $w_{q_k}\to w$  strongly in $L_{loc}^\gamma(\mathbb{R}^2)$ for any $\gamma\geq2$. Moreover,
 $w_q$ solves \eqref{thm3-11}.

In the same way as the case $(i)$, we analyze the limiting behavior of $\varepsilon_{q_k}^2V(\varepsilon_{q_k}(x+y_{q_k}))$.

If $\liminf_{k\to\infty}|\varepsilon_{q_k}y_{q_k}|\leq C$ with $C>0$ independent of $k$, there exists $\{q_k\}$ such that $|\varepsilon_{q_k}y_{q_k}|\leq C$ and \eqref{thm3-16} holds.

If $\liminf_{k\to\infty}|\varepsilon_{q_k}y_{q_k}|=\infty$, then there exists $\{q_k\}$ such that $|\varepsilon_{q_k}y_{q_k}|\to\infty$. By \eqref{thm3-25}, we proceed as the  case $(i)$ that \eqref{thm3-15a} also holds. Hence, for any $x\in B_M(0)$, due to \eqref{thm3-6a} and \eqref{thm3-2aaa}, we find
\begin{equation*}
\begin{split}
\varepsilon_{q_k}^2V(\varepsilon_{q_k}(x+y_{q_k}))
&\leq \varepsilon_{q_k}^2\prod_{i=1}^n
\Big(|\varepsilon_{q_k}x|+|-x_i+\varepsilon_{q_k}y_{q_k}|\Big)^{p_i}\\
&\leq 2\varepsilon_{q_k}^2\prod_{i=1}^n|-x_i+\varepsilon_{q_k}y_{q_k}|^{p_i}\\
&\leq 2\varepsilon_{q_k}^2 \lambda_{q_k}\lambda_{q_k}^{-1}\prod_{i=1}^n|-x_i+\varepsilon_{q_k}y_{q_k}|^{p_i}\\
&\leq \frac{C\varepsilon_{q_k}^2 \lambda_{q_k}}{\eta}\rightarrow 0.
\end{split}
\end{equation*}

Now, we treat the factor $\varepsilon_{q_k}^2\mu_{q_k}$. By \eqref{thm3-4},  \eqref{thm3-27}, \eqref{thm3-2aaa} and  \eqref{thm3-25}, we have  $-\mu_{q_k}=O(\varepsilon_{q_k}^{-2})$. Hence, there exists $\{q_k\}$ and $\beta>0$  such that $\varepsilon_{q_k}^2\mu_{q_k}\to -\beta^2$.  Taking the limit in \eqref{thm3-11}, we see that $w$ satisfies
\[
-\Delta w+\beta^2w=a^*w^3.
\]
 Similar to the proof in the subcase $(i)$,  there exists $y_0\in\mathbb{R}^2$ and $\{v_{q_k}\}$ such that
 \begin{equation}\label{thm3-29}
 \varepsilon_{q_k}v_{q_k}(\varepsilon_{q_k}(x+y_{q_k}))\to \frac{\beta}{\|Q\|_2}Q(\beta(x-y_0)) \ \ \ {\rm strongly \ in} \ \
L^2(\mathbb{R}^2).
 \end{equation}
Equation \eqref{eq:1.13} then follows from \eqref{thm3-22}--\eqref{thm3-24} and \eqref{thm3-29}. The proof is complete.\qquad$\Box$

\bigskip

Finally, we  deal with the special case: $V(x)=|x|^p$ with $p\geq 1$.
\bigskip

{\bf Proof of Corollary \ref{cor2}.}
 In the case  $V(x)=|x|^p$, $v_q$ satisfies
 \begin{equation}\label{thm3-30}
 -\Delta v_q+|x|^pv_q=\mu_qv_q+v_q^{q+1}.
 \end{equation}
By the classical bootstrap argument, we have $v_q\in C^2(\mathbb{R}^2)$ and $\lim_{|x|\to \infty}v_q(x)=0$. We know from Theorem 2 in \cite{LN} that $v_q$ is radially symmetric and decreasing from the origin. Since the Sobolev inclusion $H^1_r(\mathbb{R}^2)\hookrightarrow L^\gamma(\mathbb{R}^2)$ is compact for any $\gamma>2$, we may choose $y_q=0$ in \eqref{thm3-9}, and it is readily to show that
 \eqref{thm3-17} holds true.

 We may verify through the proof of Theorem \ref{thm3} that only subcase $(iv)$ and case $(ii)$ may happen, that is,
 $$
 \varepsilon_{q_k}^2\mu_{q_k}\to -\beta^2<0
 $$
 and
 $$
 \varepsilon_{q_k}^{2-q}a\to a^*.
 $$
By \eqref{thm3-11}, we have
  \begin{equation}\label{dnmd}
  -\Delta w_{q_k}\leq \Big(\varepsilon_q^{2-q}aw_{q_k}^{q_k}\Big)w_{q_k}.
  \end{equation}
  Using the De Diorgi-Nash-Moser estimate, see Theorem 4.1 in \cite{HL}, we obtain
  \[
  \max_{B_1(\xi)}w_{q_k}\leq C\Big(\int_{B_2(\xi)}|w_{q_k}|^\gamma\,dx\Big)^{\frac{1}{\gamma}}
  \]
  for any $\gamma\geq 2$ and $\xi\in\mathbb{R}^2$. The constant $C$ depends only on the bound of $\|w_{q_k}\|_{L^\gamma(B_2(\xi))}$. It follows from $\|w_q\|_{H^1(\mathbb{R}^2)}^2=1$, $H^1(\mathbb{R}^2)\hookrightarrow L^{\gamma}(\mathbb{R}^2)$ for any $\gamma\geq 2$ and \eqref{dnmd} that $w_{q_k}$ is uniformly bounded in $L^\infty(\mathbb{R}^2)$.

  By Lemma 1.7.3 in \cite{CA} and $\|w_{q_k}\|_2^2=1$,
  \[
  w_q(x)\leq \frac{C}{|x|}.
  \]
  So there exists $R>0$, independent of $k$, and $k$ large enough, for $|x|\geq R$ that,
 \[
 -\Delta w_{q_k}\leq -\frac{1}{3}\beta^2w_{q_k}.
 \]
  By the comparison principle,
 \[
 w_{q_k}(x)\leq Ce^{-\frac{1}{4}\beta^2|x|}
 \]
 for $|x|\geq R$. Since $w_{q_k}$ is uniformly bounded in $L^\infty(\mathbb{R}^2)$, we have
\begin{equation}\label{thm3-31}
w_{q_k}(x)\leq Ce^{-\frac{1}{4}\beta^2|x|}.
 \end{equation}
 for any $x\in\mathbb{R}^2$, where $C$ is independent of $k$.
 Therefore,
 \begin{equation}\label{thm3-32}
 \begin{split}
 \int_{\mathbb{R}^2}V(x)v_{q_k}^2\,dx=\int_{\mathbb{R}^2}V(\varepsilon_{q_k}x)w_{q_k}^2\,dx
 \leq C\int_{\mathbb{R}^2}\varepsilon_{q_k}^p|x|^pe^{-\frac{1}{2}\beta^2|x|}\,dx\to0.
 \end{split}
\end{equation}
The inequalities \eqref{Pro4.2-4}  and \eqref{thm3-32} yield
\begin{equation}\label{thm3-33}
\Big|\int_{\mathbb{R}^2}x\cdot\nabla Vv_{q_k}^2\,dx\Big|\leq C.
\end{equation}
We conclude from \eqref{Pro4.2-2}, \eqref{thm3-32} and \eqref{thm3-33}  that
\begin{equation}\label{thm3-34}
\lim_{k\to \infty}\tau_{q_k}^{-2}\int_{\mathbb{R}^2}|\nabla v_{q_k}|^2\,dx=1,
\end{equation}
and case $(i)$ can not happen.

By \eqref{5.6a}, \eqref{thm3-33} and \eqref{thm3-34}, we obtain
\begin{equation}\label{zzh1}
\lim_{k\rightarrow\infty}\frac{\int_{\mathbb{R}^2}|\nabla v_{q_k}|^2\,dx} { \frac{a}{2}\int_{\mathbb{R}^2}\big|v_{q_k}\big|^{q_k+2}\,dx}=1.
\end{equation}
It follows from \eqref{thm3-4} and \eqref{thm3-32}--\eqref{zzh1} that  $\lim_{q_k\to\infty}\frac{\mu_{q_k}}{\tau_{q_k}^2}=-1$, and then
$\varepsilon_{q_k}^2\mu_{q_k}\to -1$.  By \eqref{thm3-12},  $\beta^2=1$. The proof is complete.  \qquad$\Box$

\bigskip
\vspace{2mm}
\noindent{\bf Acknowledgment} The first author is supported by NNSF of China, No:11671179 and 11771300. The second author is supported by NNSF of China, No:11701260.

{\small
\end{document}